\sf\color{red}(#1)}%
\sf\color{black}(#1)}%
\newtheorem{lemma}{Lemma}
\newtheorem{conjecture}{Conjecture}
\newtheorem{lemma*}{Lemma}
\newtheorem{property}{Property}
\newtheorem{theorem}{Theorem}
\newtheorem{assumption}{Assumption}
\newtheorem{remark}{Remark}
\newtheorem{example}{Example}
\newtheorem{proposition}{Proposition}
\theoremstyle{definition}
\newtheorem{definition}{Definition}
\def\cC{\mathcal{C}}
\def\cJ{\mathcal{J}}
\def\cP{\mathcal{P}}
\def\bN{\mathbb{N}}
\def\bC{\mathbb{C}}
\def\bR{\mathbb{R}}
\def\bQ{\mathbb{Q}}
\def\bV{\mathbb{V}}
\def\bM{\mathbb{M}}
\def\bZ{\mathbb{Z}}
\def\Ex{\mathbb{E}}
\def\orr{\overline{r}}
\def\cred{\color{red}}
\def\cblue{\color{blue}}
\def\DaGreen{\color{green}}\def\cblue{\color{blue}}
\def\cblack{\color{black}}
\def\eq1{\stackrel{1}{=}}
\title{Bounded Discrete Bridges}
\author{Pierre Nicod\`eme\\[1ex]
\centering{
(Retired from CNRS - LIPN - University Paris Sorbonne
  Nord)
}}
\begin{document}

\maketitle
\noindent{\bf Keywords:} Asymptotics of discrete bridges, periodic case, 
Hankel integrals, Strong
  Embeddings, Hermite polynomials.
 
\begin{remark}
To be self-content, we recall in this article the relevant 
 steps~\footnote{Sections~\ref{sec:intro},~\ref{sec:prelim} 
and~\ref{sec:aperiodicg} come
 from~\cite{BanderierFlajolet2002}. 
Sections~\ref{sec:bridgoverh},~\ref{sec:asympsimp} (in part)
 follow~\cite{BaNi2010}.}   of the proof
of Banderier-Nicodeme~\cite{BaNi2010} and of
Banderier-Flajolet~\cite{BanderierFlajolet2002}.
\end{remark}

\section{introduction}
\label{sec:intro}
This article is organized as follows.
\begin{enumerate}
\item We first compute $F^{[>h]}(z,u)$, the bivariate generating
  function
giving the probability that a walk of length $n$ exceeds height $h$.

Next, we compute $B(z)=[u^0]F^{[>h]}(z,u)$, the restriction of these
walks to bridges.

\item We extract the Taylor coefficient of order $n$ of
  $B(z)$.
We  cope first  with the aperiodic case and next with the
periodic case.
In both cases, the proof has two steps.
\begin{enumerate}
\item Design of a Cauchy contour upon which the domination properties
  of the roots of the kernel of the walk applies, which allows
  asymptotic
simplifications.
\item Application of the singularities analysis methods as exposed in
  Flajolet-Sedgewick book~\cite{FlajoletSedgewick2009}; in particular
   use of the semi-large powers approach and of Hankel integrals.
\end{enumerate}
\item A section is dedicated to {\L}ukasiewicz bridges for which
  asymptotic expansions at higher order is available; we mention there
  the occurrence of Hermite polynomials in the expansions.
We use in this section Newton iterations and do a numerical check of
our expansions  for Dyck walks.
\end{enumerate}

\section{Preliminaries and definitions}
\label{sec:prelim}
We recall the definitions of
Banderier-Flajolet~\cite{BanderierFlajolet2002}.
\begin{definition}
\label{def:jumps}
We consider simple directed walks defined by  sets of jumps 
$\mathcal{S}\in\{d,d-1,\dots,-c+1,-c\}$ and  sets of weights,
$\mathcal{W}\in\{p_d,p_{d-1},\dots,p_{-c+1},p_{-c}\}$ with $d\neq 0$ and
$c\neq 0$.

The characteristic Laurent polynomial $P(u)$ of a walk with set of
jumps
$\mathcal{S}$ and weights $\mathcal{W}$ verifies
\begin{equation}
\label{eq:charpoly}
P(u)=p_d u^d
+p_{d-1}u^{d-1}+\dots+p_1u+p_0+\frac{p_{-1}}{u}+\dots+\frac{p_{-c}}{u^c},
\end{equation}
where the coefficients $p_i$ are positive rational numbers.
\begin{equation}
\label{eq:kerneleq}
\text{The equation}\quad 1-zP(u)=0,\quad \text{ or equivalently }\quad u^c-zu^cP(u)=0,
\end{equation}
is the \emph{kernel equation}, the quantity $K(z,u)=u^c-zu^cP(u)$
being referred to as the \emph{kernel} of the walk.
\end{definition}
\begin{assumption}
\label{as:norep}
We assume throughout this article that the decomposition over $\bC$ of the
characteristic polynomial has no repeated factor 
\begin{equation}
\label{eq:asumrep}
\nexists \upsilon \text{ with } P'(\upsilon)=0 \text{ and }P''(\upsilon)= 0
\end{equation}
\end{assumption}

\begin{definition}
\label{def:period}
A Laurent series $h(z)=\sum_{n \geq -a} h_nz^n$ is said to admit period
  $p$ if there exists a Laurent series $H$ and an integer $b$ such
that
\begin{equation}
\label{eq:perseries}
h(z)=z^bH(z^p);
\end{equation}
the largest $p$ such that a decomposition (\ref{eq:perseries}) holds
is called the period of $h$. The series is \emph{aperiodic} if the period is
$1$.

A simple walk defined by the set of jumps $\mathcal{S}$ is said to have
\emph{period} $p$ if the characteristic polynomial has period $p$.

A simple walk is said to be \emph{reduced} if the gcd of jumps is
equal to $1$.
\end{definition}
For a bounded walk at height $h$, and $(x_n,y_n)$  its position at
time $n$ within the lattice $\bN \otimes \bZ$, the possible positions $(x_{n+1},y_{n+1})$ at
time $n+1$ are
\begin{align*}
   &\quad x_{n+1}=x_{n}+1,\\
   &\quad y_{n+1}=y_n+j \quad\text{if}\quad y_n+j\leq h,\quad j\in
  \mathcal{S},\\[2ex]
&\text{with} \quad(x_0,y_0)=(0,0),\qquad \text{ (the walk starts at the origin).}
\end{align*}
\begin{remark}
\label{rem:reduced}
If a non-reduced walk verifies $\operatorname{gcd}\mathcal{S} = r$,
the points accessible by the walk lie on the sub-lattice $\bN\otimes
r\bZ$, and by a linear change of abscissa, the walk can be reduced.
We assume therefore in the following that the walks we consider are reduced.
\end{remark}
\section{Aperiodic case}
\label{sec:aperiodicg}
\subsection{Unbounded bridges}
\label{sec:unbaperiodic}
Banderier-Flajolet~\cite{BanderierFlajolet2002} 
compute asymptotically the number of bridges of length $n$.
They use a saddle-point
integral
at the singular point $\tau$ such that $P'(\tau)=0$ and justify it by
the aperiodicity which implies that $|P(u)|$ is only maximal at
$z=\tau$. This leads to the following
theorem.
\begin{theorem}[Banderier-Flajolet 2002~\cite{BanderierFlajolet2002}-Theorem 3]
\label{theo:BaFlaaperunb}
Let $\tau$ be the structural constant of an aperiodic walk determined
by $P'(\tau)=0$. The number $V_n$  of bridges of size $n$ admits a complete
asymptotic expansion
\begin{equation}
V_n\sim \lambda_0\frac{P(\tau)^n}{\sqrt{2\pi n}}
     \left(1+\frac{a_1}{n}+\frac{a_2}{n^2}+\dots\right),
     \qquad \lambda_0=\frac{1}{\tau}\sqrt{\frac{P(\tau)}{P''(\tau)}}.
\end{equation}
\end{theorem}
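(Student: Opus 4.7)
The plan is to apply the saddle-point method directly to a Cauchy integral representation of $V_n$. Since bridges arise by extracting the $u^0$ coefficient of $P(u)^n$, and $P(u)^n$ is a Laurent polynomial, Cauchy's theorem on any circle $|u|=r>0$ yields
\begin{equation}
V_n \;=\; \frac{1}{2\pi i} \oint_{|u|=r} P(u)^n \, \frac{du}{u}.
\end{equation}
First I would choose $r=\tau$ and parameterize $u=\tau e^{i\theta}$ to reduce this to
\begin{equation}
V_n \;=\; \frac{1}{2\pi} \int_{-\pi}^{\pi} P(\tau e^{i\theta})^n \, d\theta.
\end{equation}

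Next I would check that $\theta=0$ is a non-degenerate saddle. Two chain-rule differentiations of $\log P(\tau e^{i\theta})$, together with $P'(\tau)=0$, give
\begin{equation}
\log P(\tau e^{i\theta}) \;=\; \log P(\tau) \;-\; \frac{\tau^2 P''(\tau)}{2\,P(\tau)}\,\theta^2 \;+\; O(\theta^3),
\end{equation}
with a strictly positive coefficient of $-\theta^2/2$, since $P$ is convex on $(0,\infty)$ and $P''(\tau)\neq 0$ by Assumption~\ref{as:norep}. Aperiodicity then ensures $|P(\tau e^{i\theta})|<P(\tau)$ strictly for every $\theta\in(-\pi,\pi]\setminus\{0\}$, so the integrand concentrates sharply at $\theta=0$.

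The standard recipe then applies: split the integral into a central window $|\theta|<n^{-1/2+\varepsilon}$ and a tail. The tail contributes $O(\rho^n)$ for some $\rho<P(\tau)$, exponentially smaller than the target. On the central window I would rescale $\theta=t/\sqrt{n}$, expand $n\log P(\tau e^{it/\sqrt n})$ as a formal series in $n^{-1/2}$ whose coefficients are polynomials in $t$, and integrate termwise against the Gaussian $\exp(-\tau^2 P''(\tau)\,t^2/(2 P(\tau)))$. The leading Gaussian integral evaluates to $\sqrt{2\pi P(\tau)/(n\tau^2 P''(\tau))}$, and combining this with the prefactor $P(\tau)^n/(2\pi)$ reproduces exactly $\lambda_0\,P(\tau)^n/\sqrt{2\pi n}$ with $\lambda_0=\tau^{-1}\sqrt{P(\tau)/P''(\tau)}$. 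The half-integer correction terms vanish because $P$ has real coefficients, so $V_n$ equals the real part of the contribution from $[0,\pi]$; the purely imaginary contributions $i c_{2k+1} t^{2k+1}/n^{k+1/2}$ cancel, leaving an expansion in integer powers of $1/n$.

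The main obstacle I expect is the rigorous justification of termwise integration together with uniform control of the Taylor remainders on the window $|\theta|<n^{-1/2+\varepsilon}$; this is the standard but somewhat delicate step of quasi-powers/saddle-point expansions. A secondary but conceptually crucial point is the strict subdominance of $|P|$ off $\tau$ on the circle $|u|=\tau$: this is precisely where aperiodicity enters, since a period $p>1$ would produce $p$ equally dominant saddle contributions on the circle, which is exactly why the subsequent periodic sections of the paper will require separate treatment.
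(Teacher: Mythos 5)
Your proposal is correct and follows essentially the same route as the paper: a Cauchy coefficient integral on the circle $|u|=\tau$, a saddle point at $\theta=0$ justified by $P'(\tau)=0$ and the strict dominance $|P(\tau e^{i\theta})|<P(\tau)$ coming from aperiodicity, Gaussian evaluation yielding $\lambda_0=\tau^{-1}\sqrt{P(\tau)/P''(\tau)}$, and vanishing of the odd (half-integer) correction terms. The only difference is cosmetic: the paper kills the $n^{-k-1/2}$ terms by observing that odd moments of the Gaussian vanish, whereas you invoke realness of $P$'s coefficients, but these are two phrasings of the same parity argument.
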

 We follow the proof of
 Banderier-Flajolet. (See also Greene and Knuth~\cite{GreKnu81}).

Let $V_n$ be the number of bridges of aperiodic walks of length $n$.
The large power $P(u)^n$ has a saddle point at $\tau$ such that $P'(\tau)=0$, and therefore
\begin{align}
\label{eq:unboun0}
&V_n=[u^0]P(u)^n=\frac{1}{2\pi i}\oint_{|u|=\tau} \frac{P(u)^n}{u}du
\\[2ex]
\nonumber
&\sim \frac{1}{2\pi}\int_{\tau e^{-i\epsilon}}^{\tau e^{+i\epsilon}}
\exp\left(n\left(\log P(\tau)+\frac{1}{2}\frac{P''(\tau)}{P(\tau)}(u-\tau)^2+\dots
\right)\right)\frac{du}{u}\\[2ex]
\nonumber
&\sim \frac{P(\tau)^n}{2\pi}\int_{
  -\infty}^{+\infty} e^{-n\phi
  t^2/2}G(t,n)\frac{dt}{\sigma\sqrt{n}},\qquad\qquad\qquad 
\left\{\begin{array}{l}u=\tau e^{i
  t},\\\phi=P''(\tau)/P(\tau),\ t=s/\sqrt{\phi n},\end{array}\right.\\[2ex]
\nonumber
&\sim
\frac{P(\tau)}{\tau2\pi\sqrt{\phi}}\int_{-\infty}^{+\infty}e^{-s^2/2}(1+H(s,n))\frac{ds}{\sqrt{n}}
=\frac{P(\tau)}{\tau\sqrt{2\pi n\phi}}
\times\left(1+O\left(\frac{1}{\sqrt{n}}\right)\right),\quad
\nonumber
\end{align}
where 
\[H(s,n)=\exp\left(n\sum_{i\geq
  3}\alpha_i\times\frac{s^i}{
  n^{i/2}}\right)=\sum_{j\geq
  3}\beta_{j}\frac{s^{j}}{n^{(j-2)/2}}.\]
We recognize the Gaussian integrals,
\[ \frac{1}{\sqrt{2\pi}}\int_{0}^{\infty}e^{-s^2/2}s^{2k+1}ds=0,\qquad 
 \frac{1}{\sqrt{2\pi}}\int_{0}^{\infty}e^{-s^2/2}s^{2k}ds=\frac{k!}{(k/2)!2^{k/2}}.\]
At order 4, in the particular case where $P(1)=1$, with  $\sigma=\sqrt{P''(1)},\ \xi=P'''(1)$ and
$\theta=P''''(1)$, this gives
\begin{equation}
\label{eq:ucond}
b_n^{<\infty}= \frac{1}{\sigma\sqrt{2\pi n}}
        \left(1-\frac{1}{n^2}\frac{1}{\sigma^6}\left(\sigma^4+\frac{1}{2}\sigma^2\xi-\frac{1}{8}\sigma^2\theta+\frac{3}{8}\sigma^6+\frac{5}{24}\xi^2\right)+O\left(\frac{1}{n^4}\right)\right),
\end{equation}
where $b_n^{<\infty}$ is the probability that a walk of length $n$ is  a
bridge.

\subsection{Bridges reaching height  $h$}
\label{sec:bridgoverh}
Similarly to Banderier-Flajolet~\cite{BanderierFlajolet2002}, Equations
(14) and (16) and Banderier-Nicodeme~\cite{BaNi2010}, 
if  $f_n(u)=\sum_{j}f_{n,j}u^j$ where $f_{n,j}$ counts the number of  walks at
time $n$ with ordinates $y\leq j$, we get
\begin{equation}
\label{eq:fnu}
f_{n+1}(u)=f_n(u)P(u)-\sum_{k=1}^du^{h+k}[u^{h+k}]f_n(u)p_ku^k,\qquad f_0(u)=1.
\end{equation}
Summing up over $n$ provides the generating function of the
corresponding walks
\begin{align}
\nonumber
F^{[\leq h]}(z,u)&=1+\sum_{n\geq 0}f_{n+1}(u)z^{n+1}=1+zP(u)F^{[\leq
    h]}(z,u)-z\{u^{>h}\}P(u)F^{[\leq h]}(z,u),\\
\label{eq:Fzu}
&=1+zP(u)F^{[\leq h]}(z,u)-z\sum_{k=1}^{d}u^{h+k}F_k(z)=\sum_{n\geq
  0}\sum_{-nc\leq j \leq h}f_{n,j}u^j z^n,
\end{align}
where, at time $n$,
\begin{itemize} 
\item if the set of weights $\mathcal{W}$ is a probability
  distribution, {\it i.e}
$\sum p_i=1$, the quantity 
 $f_{n,j}$ is the probability of reaching height $j$ 
  at time $n$
\item or, elsewhere, $f_{n,j}$  is
 the number of ways of reaching level $j$ if the non-zero coefficients
 $p_i$ have value $1$.
\end{itemize}
We obtain the equation
\begin{equation}
\label{eq:syslin}
(1-zP(u))F^{[\leq h]}(z,u)=1-z\sum_{k=0}^{d-1}u^{h+k}F_k(z),
\end{equation}
where the functions $F_k(z)$ are unknown.

We use the most basic kernel method, and the kernel of the walk $K(z,u)=1-zP(u)$ is cancelled by 
$d$ large roots $v_1(z),\dots,v_d(z)$, and $c$ small roots $u_1(z),\dots,
u_{-c}(z)$ that verify
\begin{equation}
\label{eq:limroots}
z\rightarrow 0 \Longrightarrow \left\{\begin{array}{lll}
       v_k(z)\rightarrow p_d^{-1/d}\varpi_k z^{-1/d},&\quad
       \varpi_k=\exp(2\pi i(1-k)/d),&\quad k=(1,\dots,d)\\
       u_j(z)\rightarrow p_c^{1/c}\omega_j z^{1/c}&\quad
       \omega_j\ = \exp(2\pi i (j-1)/c),&\quad j=(1,\dots,c)\\[2ex]
       u'_j(z)\rightarrow \dfrac{p_c^{1/c}}{c}\omega_j z^{-1+1/c}.&
       \end{array}\right.
\end{equation}
We have $d$ unknowns $F_k(z)$ in  Equation (\ref{eq:syslin}), but the
$d$ large roots $v_i(z)$ with $i\in \{1,\dots,d\}$ provide a set of
$d$ linear equations
\[\left\{\begin{array}{l}
{ v_1(z)}^{h+1}{ F_{h+1}(z)}+\dots+{ v_1(z)}^{h+d}{ F_{h+d}(z)}=1/z,\\
\dots \\
{ v_d(z)}^{h+1}{ F_{h+1}(z)}+\dots+{ v_d(z)}^{h+d}{ F_{h+d}(z)}=1/z
\end{array}\right.
\]
Solving the system with the Cramer formula provides expressions
involving a determinant
 $\bM$ and  Vandermonde-like determinants $\bV$ and 
 $\bV_k$ of dimension $d$,
\begin{equation}
\bM=
\left|\begin{array}{ccccc}
   v_1^{h+d} &\dots & v_1^{h+k} & \dots & v_1^{h+1}\\
   \dots       &\dots           & \dots & \dots          &\\
   v_d^{h+d} &\dots & v_d^{h+k} & \dots & v_d^{h+1}
   \end{array}\right|=v_1^h\dots v_d^h\, \bV,\quad \text{with}\quad
\bV=\prod_{r=1}^{d-1}\prod_{s=r+1}^d
(v_r(z)-v_s(z)).
\end{equation}
This gives\footnote{We differ from Banderier-Nicodeme 2010~\cite{BaNi2010} who
consider only $Q_1(u)$.} with $\bV_k(x) =\left.\bV\right|_{v_k(z)=x}$
and ${\displaystyle Q_k(u)=\prod_{\stackrel{1\leq m\leq d}{m\neq
   k}}(u-v_m(z))=\sum_{m=0}^{d-1}q_{km}(z)u^m,}$
\begin{equation}
\label{eq:Piofi}
z F_k(z)=\frac{u^{h+1}}{v_k^{h+1}}\frac{\bV_k(u)}{\bV}
=\frac{u^{h+1}}{v_k^{h+1}} \frac{Q_k(u)}{Q_k(v_k)}
 \end{equation}
\normalsize
Since
\[ F^{<+\infty}(z,u)=\frac{1}{1-zP(u)}\quad\text{and}
    \quad F^{[>h]}=F^{<+\infty}-F^{[\leq h]},\]
where $F^{<+\infty}$ is the generating functions of unbounded walks, we get for $F^{[>h]}(z,u)$ the
generating function of walks going upon height $h$, with $v_j:=v_j(z)$,
\begin{equation}
\label{eq:flargerh}
F^{[>h]}(z,u)=\frac{1}{1-zP(u)}{\displaystyle\sum_{k=1}^d
  \left(\frac{u}{v_k}\right)^{h+1}\left(\frac{Q_k(u)}{Q_k(v_k)}\right)}\\
\end{equation}
Banderier-Flajolet 2002~\cite{BanderierFlajolet2002} provides an explicit
expression for paths terminating at height $m$. We use it for bridges,
or walks terminating at height $0$, which allows us to get rid of the
variable $u$.
\begin{theorem}[Banderier-Flajolet (2002)]
\label{theo:heightm}
The generating function $W_m(z)$ of paths terminating at altitude $m$ is, for
$-\infty<m<c$,
\[W_m(z)=[u^m]\frac{1}{1-zP(u)}=z\sum_{j=1}^c\frac{u'_j(z)}{u_j(z)^{m+1}}.\]
\end{theorem}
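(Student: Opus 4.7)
My plan is to read $W_m(z)=[u^m]\frac{1}{1-zP(u)}$ as a Cauchy contour integral in the variable $u$, then evaluate it by residues at the small roots $u_j(z)$. First I would observe that, for $z$ near $0$, the geometric expansion $\sum_{n\ge 0}z^nP(u)^n$ converges on the annulus
\[
\max_{j}|u_j(z)| \;<\; |u| \;<\; \min_{k}|v_k(z)|,
\]
which, by~\eqref{eq:limroots}, is non-empty and cleanly separates the $c$ small roots of $1-zP(u)$ (inside) from the $d$ large roots (outside). Picking any radius $r$ in this annulus, Cauchy's coefficient formula then gives
\[
W_m(z) \;=\; \frac{1}{2\pi i}\oint_{|u|=r}\frac{du}{u^{m+1}\bigl(1-zP(u)\bigr)}.
\]

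Next I would apply the residue theorem inside $|u|=r$. The singularities enclosed are $u=0$ and the small roots $u_1(z),\dots,u_c(z)$. Because $P(u)\sim p_{-c}u^{-c}$ as $u\to 0$, the factor $1/(1-zP(u))$ is analytic at the origin and vanishes there to order exactly $c$; hence, for the range $m<c$, the integrand has no pole at $u=0$, and I obtain
\[
W_m(z) \;=\; \sum_{j=1}^{c}\operatorname{Res}_{u=u_j(z)}\frac{1}{u^{m+1}\bigl(1-zP(u)\bigr)}.
\]

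Finally, Assumption~\ref{as:norep} together with~\eqref{eq:limroots} guarantees that each $u_j(z)$ is a simple zero of $1-zP(u)$ in a neighbourhood of the origin, so the residue at $u_j$ evaluates to $-1/\bigl(z\,u_j^{m+1}P'(u_j)\bigr)$. Implicit differentiation of $1-zP(u_j(z))\equiv 0$ with respect to $z$ yields $u'_j(z)=-P(u_j)/\bigl(zP'(u_j)\bigr)=-1/\bigl(z^{2}P'(u_j)\bigr)$, equivalently $-1/\bigl(zP'(u_j)\bigr)=z\,u'_j(z)$. Substituting gives the claimed identity
\[
W_m(z) \;=\; z\sum_{j=1}^{c}\frac{u'_j(z)}{u_j(z)^{m+1}}.
\]

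The only delicate step is the contour set-up: one must verify that all small branches $u_j(z)$ lie inside $|u|=r$ and remain pairwise distinct (so the residues at $u_j$ are genuine simple-pole residues), and that nothing is picked up at the origin when $m<c$. Both facts follow directly from the asymptotics~\eqref{eq:limroots} in a punctured neighbourhood of $z=0$; once they are in hand, the rest is a routine residue computation, and the identity extends from this neighbourhood to the full formal-series equality by analytic continuation.
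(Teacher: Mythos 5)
Your residue computation is correct, and it is essentially the canonical argument: the paper itself states Theorem~\ref{theo:heightm} without proof, citing Banderier--Flajolet (2002), and the proof given there is exactly this Cauchy-coefficient/residue evaluation at the small branches (separation of small and large roots near $z=0$, no pole at $u=0$ for $m<c$, and the identity $-1/(zP'(u_j))=z\,u'_j(z)$ from implicit differentiation of the kernel equation). One small remark: you do not need Assumption~\ref{as:norep} to get simple poles here --- in a punctured neighbourhood of $z=0$ the $c$ small branches are automatically pairwise distinct because their Puiseux expansions in (\ref{eq:limroots}) have distinct leading coefficients $\omega_j$, so the residues are simple for free, and the identity then extends by analytic continuation exactly as you say.
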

\noindent
Therefore, the generating function $B_h(z)$ of bridges reaching height
$h$ verifies
\begin{align}
B_h(z)&=[u^0]F^{[>h]}(z,u)
   =[u^0] \frac{1}{1-zP(u)}\sum_{k=1}^d 
   \frac{1}{Q_k(v_k)}\left(\frac{u}{v_k}\right)^{h+1}\sum_{m=0}^{d-1}q_{km}(z)u^m
\label{eq:BhF0}\\
&=[u^0] \frac{1}{1-zP(u)}\sum_{k=1}^d\sum_{m=0}^{d-1}
           \frac{q_{km}(z)}{Q_k(v_k)v_k^{h+1}}u^{h+1+m}\\
&=\sum_{k=1}^d
   \frac{1}{v_k^{h+1}Q_k(v_k)}\sum_{m=0}^{d-1}q_{km}(z)W_{-h-1-m}(z) 
\nonumber\\ 
&=\sum_{k=1}^d
   \frac{1}{v_k^{h+1}Q_k(v_k)}\sum_{m=0}^{d-1}q_{km}(z)z\sum_{j=1}^c
   u_j^{h}u_j^m u'_j
\label{eq:sumjk}
=z\overline{B}_h(z),
\end{align}
where
\begin{equation}
\label{eq:overB}
\overline{B}_h(z)=\sum_{k=1}^d\sum_{j=1}^c \left(\frac{u_j}{v_k}\right)^h \frac{Q_k(u_j)}{Q_k(v_k)}\frac{u'_j}{v_k}
\end{equation}
The computation of the Taylor coefficient $[z^n]B_h(z)$ by  a
Cauchy integral implies to localize the singularities of $B_h(z)$.

The singularities $\zeta$ of the roots of the kernel $K(z,u)=1-zP(u)$ drive the
asymptotic expansion
of the function $B_h(z)$ of Equation (\ref{eq:sumjk}). They verify
\begin{equation}
\label{eq:singul}
P'(\upsilon)=0,\qquad 1-\zeta P(\upsilon)=0
\end{equation}
We recall in the following remark properties of algebraic functions 
(see Stanley~\cite{Stanley1980}) that we apply to the roots $u_i(z)$ 
and $v_j(z)$ of the kernel equation.
\begin{figure}[!ht]
\setlength{\unitlength}{1.1mm}
\begin{picture}(60,0)(-40,20)
\put(-30,-38) {\includegraphics[height=7.5cm,width=7.5cm]{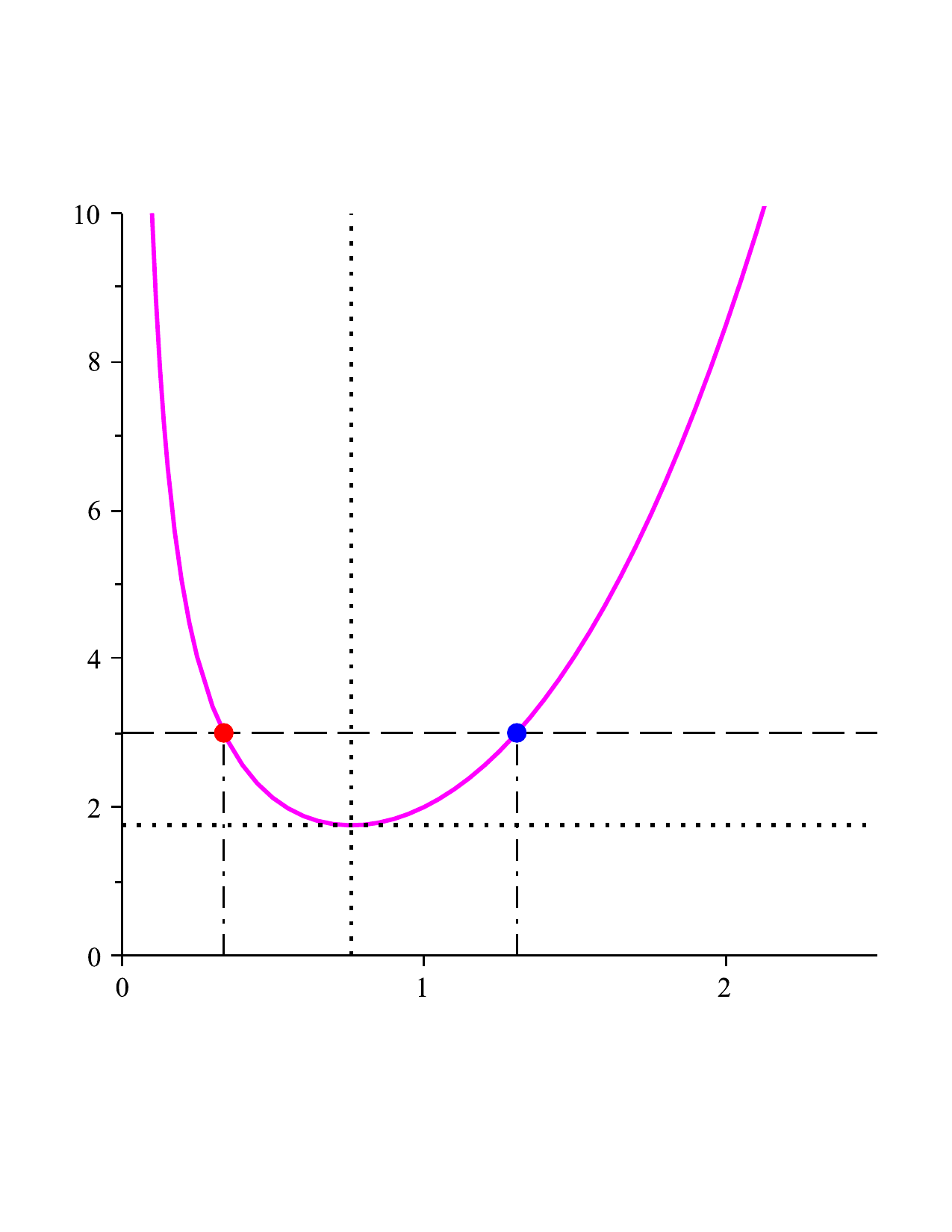}}
\put(30,-38) {\includegraphics[height=7.5cm,width=7.5cm]{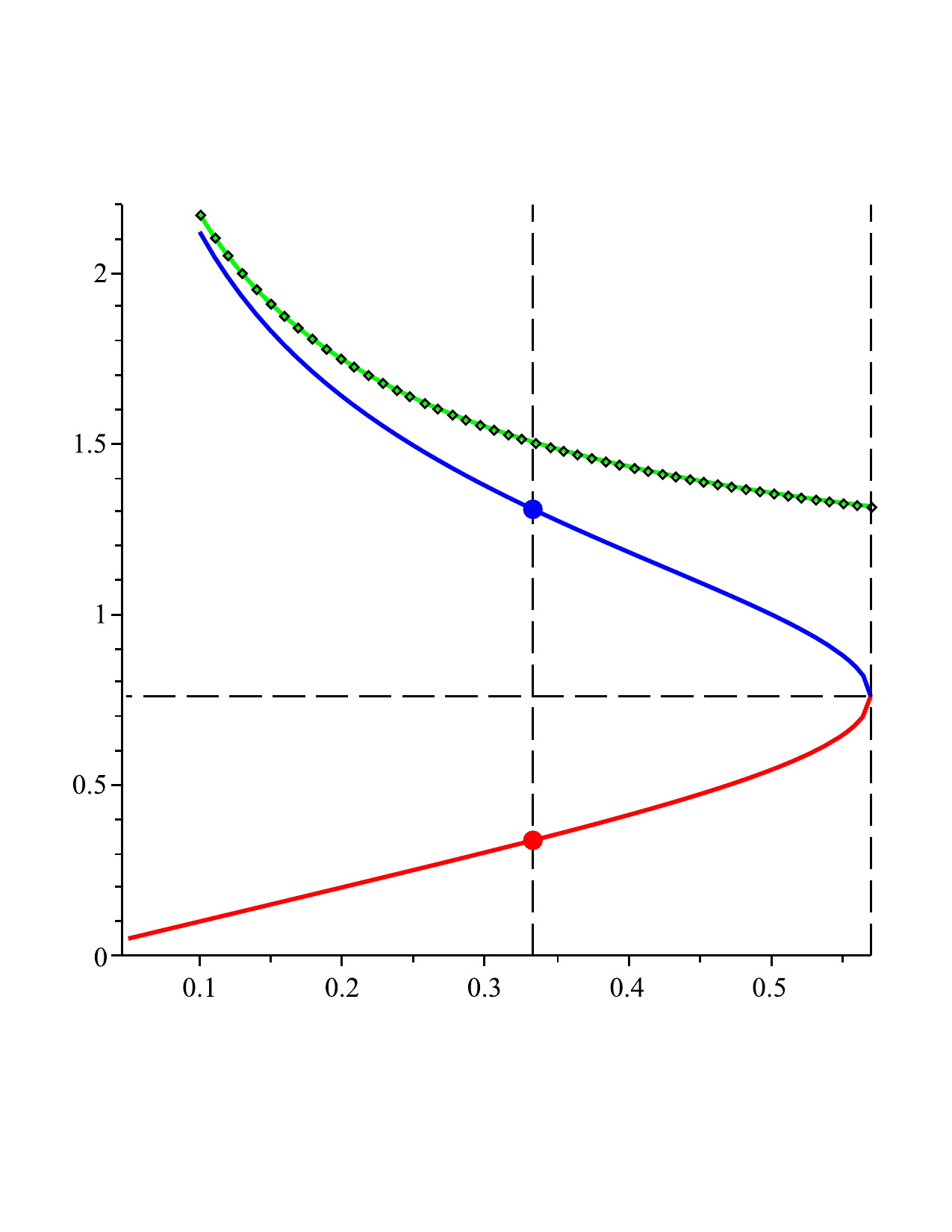}}
\put(91,-24.5){\tiny $z$}
\put(89.5,20){\tiny $z=\rho$}
\put(62,20){\tiny $z=\zeta=1/3$}
\put(36.5,-8.5){\tiny $\tau$}
\put(70,-18){\cred\tiny $u_1(z)$}
\put(70,-3){\cblue\tiny $v_1(z)$}
\put(73,6){\tiny ${\DaGreen|v_2(z)|},{\cblack|v_3(z)|}$}
\put(30,-21){\tiny $u$}
\put(-26,-11){\tiny $1/\zeta$}
\put(-16.5,-24.8){\tiny $u_1(\zeta)$}
\put(4.5,-24.8){\tiny $v_1(\zeta)$}
\put(-5.5,-24.8){\tiny $\tau$}
\put(12,20){\tiny $P(u)=u^3+\dfrac{1}{u}$}
\put(20,-14.5){\tiny $P(\tau)=1/\rho$}
\put(-40,-82){\rule{\textwidth}{0.2mm}}
\end{picture}
\ \vspace{5cm}
\caption{\label{fig:ROOTS} A  visual rendering of the proof of the domination
  property~\cite{BanderierFlajolet2002} stated in
  Lemma~\ref{lem:domin} of Banderier-Flajolet  for $P(u)=u^3+\dfrac{1}{u}$.
(Left): behaviour of the characteristic polynomial
$P(u)$. (Right): domination
property of the roots in the real interval $]0,\rho]$. We have
$P''(u)>0$ for $u>0$, while $P(u)$ tends to infinity as $u$ tends to
$0$ or $+\infty$. There exists
a number $\tau$ that
is the unique positive solution of $P'(z)=0$. For $\frac{1}{z}>\frac{1}{\rho}$
or $z<\rho$ with $\rho=\frac{1}{P(\tau)}$ the equation $1-zP(u)=0$ has 
for $z\in ]0,\rho[$ two
{\sl real} solutions $u_1(z)$ and $v_1(z)$ such that {\it(i)} 
$\lim_{z\rightarrow 0^+}u_1(z)= 0$ (dominant small root) and $\lim_{z\rightarrow 0^+}v_1(z)= +\infty$
(dominant large root) and 
{\it(ii)} $u_1(z)< v_1(z)$ for $u\in[0,\rho[$.
As proved in Lemma~\ref{lem:domin} we have  $u_1(z)<v_1(z)<
|v_2(z)|=|v_3(z)|$
 for $z\in ]0,\rho[$; moreover for the present example $v_2(z)$ and
    $v_3(z)$
 are algebraically    conjugate.
}
\end{figure}
\begin{remark}
\label{rem:algebrprop}
({\it i}) The derivative of an algebraic function is an algebraic function,
  and so are the $u'_j(z)$.
({\it ii}) A rational expression of algebraic functions such as $B_h(z)$ is
  algebraic; in particular  the denominators $Q_k(z)$ vanish at the
  intersections $\upsilon_{rs}$ of two roots $v_r(z)$ and $v_s(z)$ of the
  kernel,  points which verify $P'(\upsilon_{rs})=0$ and $1-zP(\upsilon_{rs})=0$
  and are algebraic points.
\end{remark}
This implies that the singularities of $B_h(z)$ are the singularities
of the roots of the kernel.

We consider them in two steps, {\it(i)} by use of Lemma 2 of domination of the
roots 
(Banderier-Flajolet~\cite{BanderierFlajolet2002}), which will 
further allow us to apply to
$B_h(z)$ asymptotic simplifications, {\it(ii)} by use of a domination property of
$B_h(z)$ by the generating function of unbounded walks $1/(1-zP(u))$.

\begin{lemma}[Banderier-Flajolet Lemma 2 (2002)]
\label{lem:domin}
Let $\tau$ verify $P'(\tau)=0$ and $\rho=1/P(\tau)$. For an aperiodic walk, the principal small branch $u_1(z)$ is analytic
on the open interval $z\in (0,\rho)$. It dominates strictly in modulus
all the other small branches $u_1(z),
\dots, u_c(z)$, throughout the half-closed interval $z\in (0,\rho[$.

By duality, the large roots $\widetilde{v}_j(z)$ of the kernel for $\widetilde{P}(u)=P(1/u)$ are the small
roots
of the kernel for  $P(u)$.
Therefore, for $z\in]0,\rho[$ the small (resp. large) roots $u_i(z)$
  (resp. $v_j(z)$) verify
\begin{equation}
\label{eq:domin}
|u_i(z)|<u_1(z)<v_1(z)<|v_j(z)|,\qquad (i\neq 1,\ j\neq 1).
\end{equation}
\end{lemma}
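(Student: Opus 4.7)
The plan is to establish the three pieces of the statement in turn: analyticity of $u_1$ on $(0,\rho)$; strict modulus domination $|u_i(z)|<u_1(z)$ for $i\neq 1$; and the large-root inequalities $v_1(z)<|v_j(z)|$ by a duality. The middle inequality $u_1(z)<v_1(z)$ comes essentially for free.

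For the analyticity, I would first observe that $P''(u)=\sum_j j(j-1)p_j u^{j-2}$ is strictly positive on $(0,\infty)$ (the $j=-c$ term alone secures this since $c\ge 1$ and $p_{-c}>0$), so $P$ is strictly convex on $(0,\infty)$, tends to $+\infty$ at both ends, and attains its unique minimum at $\tau$ with value $1/\rho$. Consequently, for each $z\in(0,\rho)$ the equation $P(u)=1/z$ has exactly two positive real solutions: $u_1(z)\in(0,\tau)$ and $v_1(z)\in(\tau,+\infty)$. Since $P'(u_1(z))<0$ throughout $(0,\rho)$, the implicit function theorem delivers analyticity of $u_1$ there, and $u_1(z)<v_1(z)$ is immediate.

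For the strict modulus comparison, fix $z\in(0,\rho)$ and $i\neq 1$ and write $u_i(z)=re^{i\theta}$. The kernel equation gives $|P(re^{i\theta})|=1/z$, and the triangle inequality (valid because every $p_j\ge 0$) yields $|P(re^{i\theta})|\le P(r)$, with equality iff all monomials $p_j r^j e^{ij\theta}$ with $p_j\neq 0$ share a common argument, i.e.\ $(j-j')\theta\in 2\pi\bZ$ for every pair $j,j'\in\mathcal{J}:=\{k:p_k\neq 0\}$. Aperiodicity says $\gcd\{j-j':j,j'\in\mathcal{J}\}=1$, which forces $\theta\in 2\pi\bZ$; then $u_i(z)$ would be a positive real root of $P(u)=1/z$, hence lie in $\{u_1(z),v_1(z)\}$. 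Both coincidences are excluded, because $u=u_1(z)$ (resp.\ $u=v_1(z)$) being a double root of $1-zP(u)=0$ would demand $P'(u)=0$ there, whereas $P'<0$ on $(0,\tau)$ and $P'>0$ on $(\tau,+\infty)$. The triangle inequality is therefore strict, $P(r)>1/z$, which by strict convexity of $P$ places $r\in(0,u_1(z))\cup(v_1(z),+\infty)$. To discard the far interval I would use continuity of $|u_i|$ on $[0,\rho)$ together with the Puiseux estimate $|u_i(z)|\to 0$ as $z\to 0^+$ from (\ref{eq:limroots}): any crossing into $(v_1,+\infty)$ would, via the intermediate value theorem, produce a $z_0$ with $|u_i(z_0)|\in\{u_1(z_0),v_1(z_0)\}$, contradicting the dichotomy just proved at $z_0$.

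The large-root inequalities follow by the substitution $u\mapsto 1/w$: the kernel becomes $1-z\widetilde P(w)$ with $\widetilde P(w):=P(1/w)$, whose exponent set is $-\mathcal{J}$ and hence still satisfies the aperiodicity gcd condition. The small roots of the dual kernel are precisely the reciprocals $1/v_j(z)$ of the original large roots, so the small-side domination just proved, applied to $\widetilde P$, yields $|1/v_j(z)|<1/v_1(z)$, i.e.\ $|v_j(z)|>v_1(z)$ for $j\neq 1$. The main technical nuisance I anticipate is the continuity/IVT step that rules out $|u_i(z)|>v_1(z)$: the small branches are globally multivalued analytic functions with a Puiseux branch point at the origin, so one must argue that each sheet continues unambiguously and continuously along the real interval $(0,\rho)$, which is where Assumption~\ref{as:norep} on the absence of repeated factors (and the analysis of where the discriminant of $1-zP(u)$ can vanish on the real axis) enters decisively.
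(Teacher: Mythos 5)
Your proof is correct and follows essentially the same route as the paper's (which is explicitly only a sketch): the strict triangle inequality combined with monotonicity of $P$ on $(0,\tau]$ for the small roots, and the duality $u\mapsto 1/u$ for the large ones. You in fact supply two details the sketch omits --- the equality analysis in the triangle inequality, which is where aperiodicity actually enters, and the continuity/IVT argument excluding $|u_i(z)|>v_1(z)$, whereas the paper's sketch only treats roots of modulus at most $\tau$.
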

\begin{proof}[Sketch of proof.]
We have~\footnote{See Figure~\ref{fig:ROOTS}}  by the triangle inequality
\begin{equation}
\label{eq:triangle} |P(re^{it})|<P(r)\quad\text{for } 0<r<\rho\text{ and }
t\not\equiv 0 \  {(\!\!\!\!\!\mod 2\pi)}.
\end{equation}
For $z=x$ real and $0<x<\rho$
and $w$ any root of
$1-xP(w)=0$ that is at most $\tau$ in modulus and not equal
to $u_1(x)$ 
(not real and positive), we have by
(\ref{eq:triangle})
\[x=\frac{1}{P(u_1(x))}=\frac{1}{P(w)}>\frac{1}{P(|w|)},\]
which implies $|w|<u_1(x)$ since $1/P$ is increasing in $[0,\tau]$.
 \end{proof}
\begin{figure}[!t]
\setlength{\unitlength}{1.1mm}
\begin{picture}(60,0)(-40,20)
\put(-45,-38) {\includegraphics[height=5.5cm,width=5.5cm]{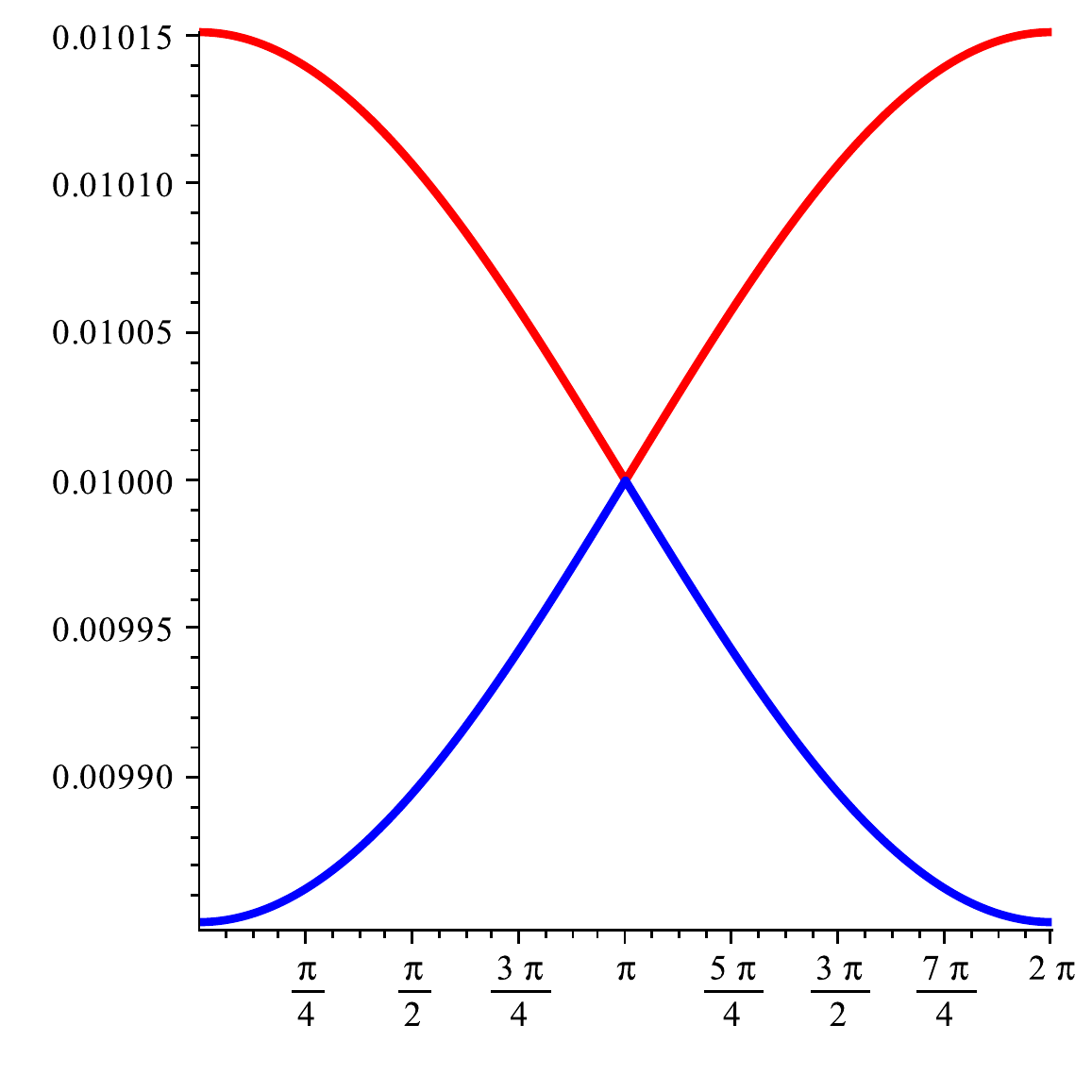}}
\put(5,-38) {\includegraphics[height=5.5cm,width=5.5cm]{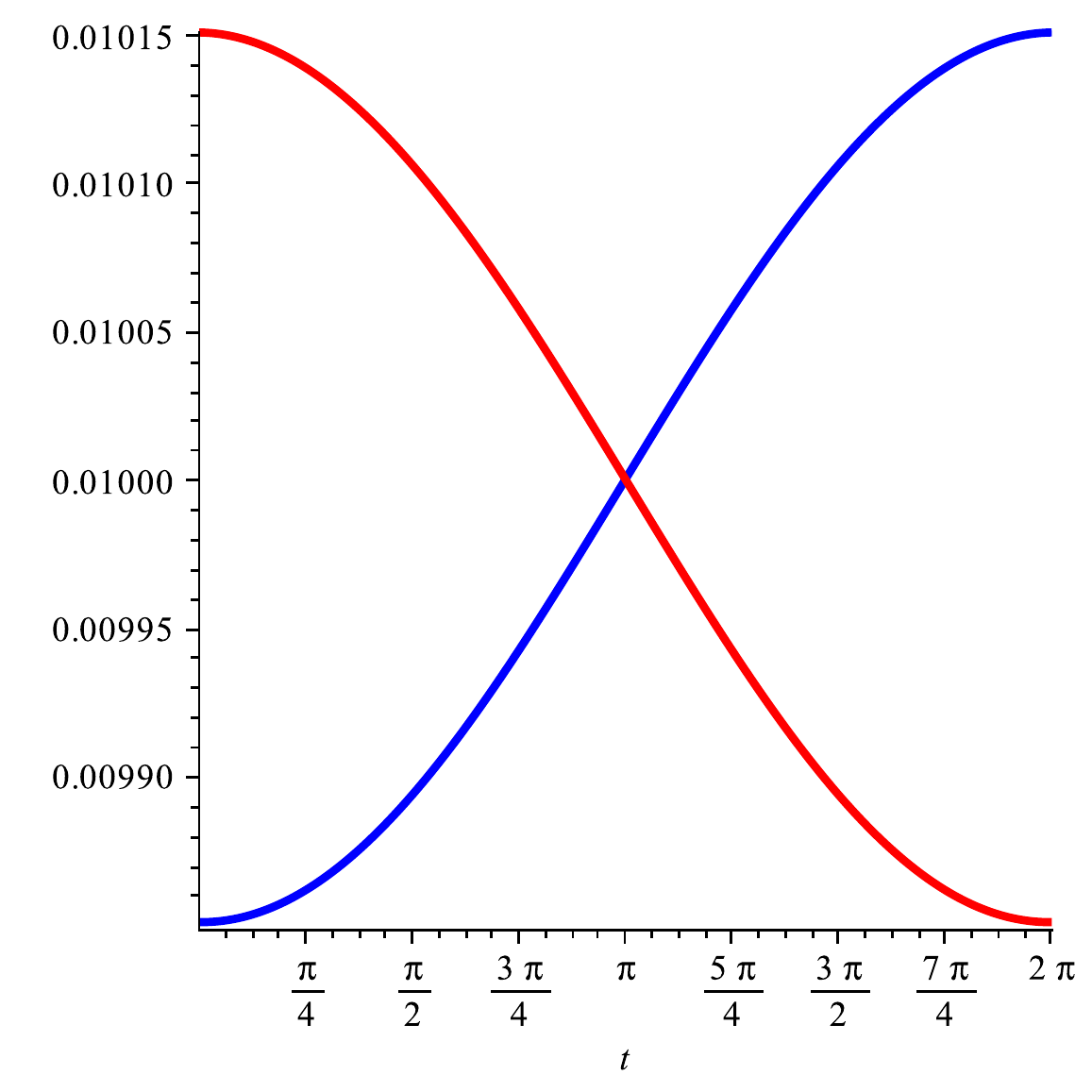}}
\put(55,-38) {\includegraphics[height=5.5cm,width=5.5cm]{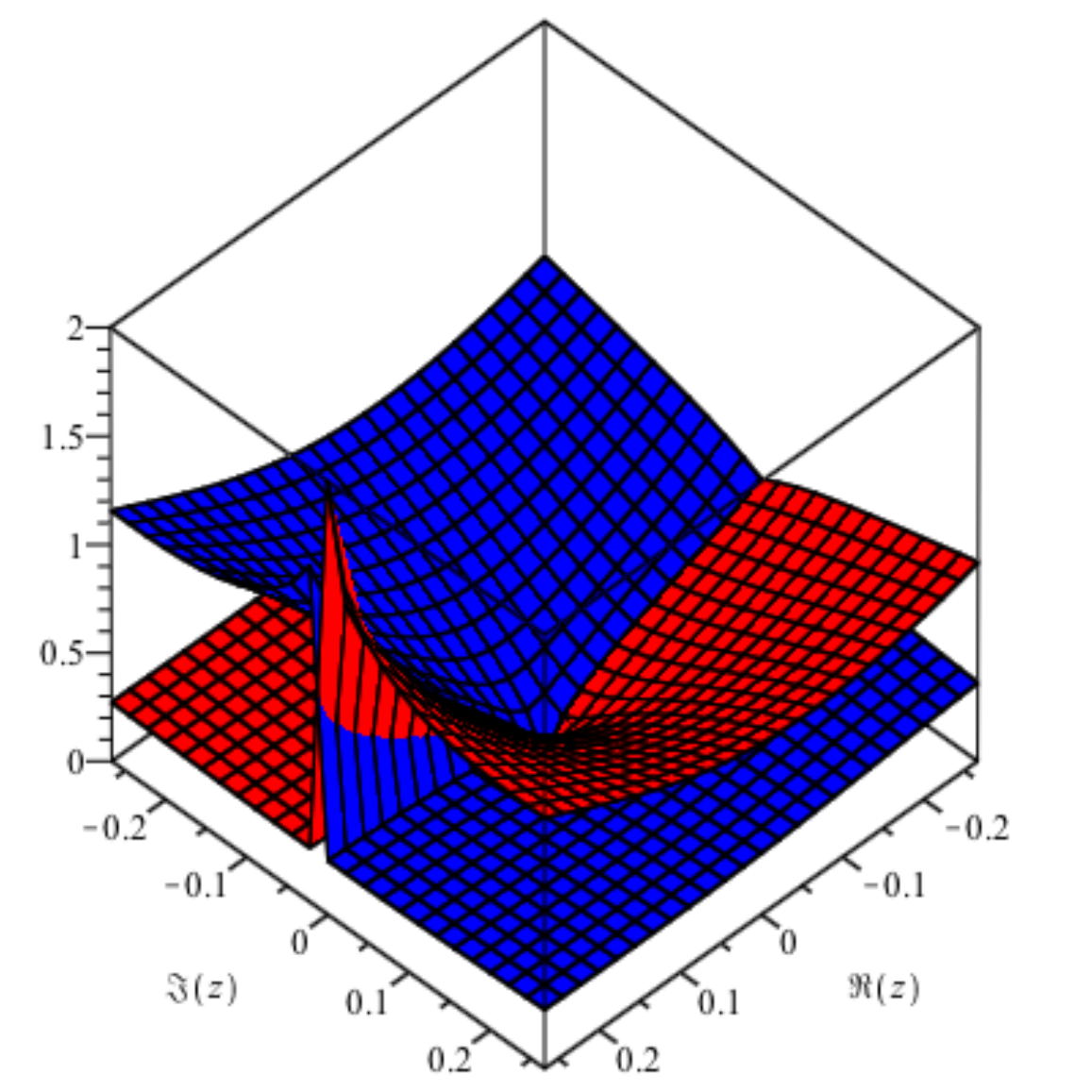}}
\put(-25,5){\tiny\cred $|\widetilde{u}_2(re^{i\theta})|$}
\put(-34 ,-17){\tiny\cblue $|\widetilde{u}_1(re^{i\theta})|$}
\put(-8,0){\tiny$r=0.0001$}
\put(43,0){\tiny$r=0.0001$}
\put(-40,20){\scriptsize $P(u)=u+\dfrac{3}{u}+\dfrac{1}{u^2}\qquad
  1-zP({\cblue u_j(z)})=0\quad 
\widetilde{u}_j(z)
=\sum_{n=0}^{100}\dfrac{z^n}{n!}\left.\dfrac{d^nu_j(z)}{d^nz}\right|_{z=0}
\quad (j\in\{1,2\})$}
\put(15,-20){\tiny\cblue $|u_1(re^{i\theta})|$}
\put(43,-20){\tiny\cred $|u_2(re^{i\theta})|$}
\put(2,-29){\tiny$\theta$}
\put(52,-29){\tiny$\theta$}
\put(60,8){\tiny\cblue$|u_1(z)|$}
\put(60,5){\tiny\cred$|u_2(z)|$}
\put(80,12){\tiny$z\in [[-\rho(1+i),+\rho(1+i)]]$}
\put(90,8){\tiny $\rho=\dfrac{4}{15}$}
\put(-40,-60){\rule{\textwidth}{0.2mm}}
\end{picture}
\ \vspace{6cm}
\caption{\label{fig:stokes}(1-Left) Stokes phenomenon on the truncated
  series of $u_1(z)$ and $u_2(z)$. (2-Center) The correct behaviour.
(3-Right) A intermixed view of the absolute values of the small roots
  that contradicts the domination property stated in \cite{BaNi2010}. The
  example given here is taken from Wallner~\cite{MichaelWallner2018}}
\end{figure}

We will consider
later the domination property in the periodic case.

\subsection{Singularities of an aperiodic walk}
\label{sec:asing}
The discriminant\footnote{See Flajolet-Sedgewick
book~\cite{FlajoletSedgewick2009} p.495.} $R(z)$ of the kernel 
$u^cK(z,u)=u^c(1-zP(u))=0$ with $u$ as the main variable
provides the singularities $\zeta_k$ of its roots as
\begin{equation}
\label{eq:resul}
\zeta_k =\frac{1}{P(\upsilon_k)}\qquad\text{ with  }\qquad P'(\upsilon_k)=0.
\end{equation}
The real point $\rho=1/P(\tau)$ with $P'(\tau)=0$ and  $\tau\in\bR^+$ is a
singularity. We prove next that there are no other singularities within the disk $|z|\leq \rho$.

The following example shows that the expansion at $z=0$ of the dominant real small
root $u_1(z)$ of the kernel $K(z,u)$ 
 has not always  positive coefficient
and we cannot therefore make
use directly on
the roots $u_1(z)$ and $v_1(z)$ of Pringsheim's Theorem  
(see Flajolet-Sedgewick book~\cite{FlajoletSedgewick2009} p. 240) 
which supposes expansions at zero with non-negative coefficients. 
\begin{example}
\label{ex:negcoef}
\begin{eqnarray*}
&&\text{Let }  P(u)=\frac{17}{24}u+\frac{1}{6u^2}+\frac{1}{8u^3},\\
&&\text{We have }
P(1)=1,\  P'(1)=0, \quad\text{but } u_1(z)
     =\frac{z^{1/3}}{2}+\frac{z^{2/3}}{9}-\frac{4z^{4/3}}{2187}+O(z^{5/3}).
\end{eqnarray*}
With an expansion at 10 digits, we obtain for the set $\Xi$ of singularities of 
$1-zP(u)$ and $i=\sqrt{-1}$
\[\Xi\approx\{1, -1.927703811, -0.2861480946 + 1.107549741 i, -0.2861480946
- 1.107549741 i\}.\]
\end{example}
For $u>0$ and $z>0$, since $f_{n,j}\geq 0$, the positive function
 ${\displaystyle F^{[> h]}(z,u)=\sum_{\substack{n\geq 0\\ -cn\leq j\leq
      dn}}f_{n,j}u^jz^n}$ 
is dominated term by term by the positive function ${\displaystyle G(z,u)=\sum_{\substack{n \geq 0\\-cn\leq j \leq dn}}g_{n,j}=\dfrac{1}{1-zP(u)}}$;
The function $G(z,u)$ refers to the set of unrestricted walks while $F^{[>h]}(z,u)$ is
  a subset of the latter, the set of walks with heights greater than
  $h$; the function $B_h(z)=[u^0]F^{[>h]}(z,u)=\sum_{n\geq 0}b_n^{>\infty} z^n$
 refers to the set of bridges,  a
  subset of both previously mentioned sets  of walks. Therefore
\[
 f_{n,k}\geq 0, \ g_{n,k}\geq 0 \Rightarrow F^{[>h]}(z,u)\triangleleft G(z,u),\quad  f_{n,k}\leq g_{n,k},
\quad \  b_n^{>h} = f_{n,0} < \sum_{-cn \leq j \leq dn}f_{n,k}.
\]
The series $G(z,u)$ seen as a function of $z$ is convergent if
$|z|<1/|P(\tau)|=\rho$ and
divergent on the contrary. 

Pringsheim's Theorem~\cite{FlajoletSedgewick2009} states
that $G(z,u)$ has a singularity at  $z=\rho$.

The Laurent polynomial $P'(u)$ cannot have roots $\upsilon$ with
$|\upsilon|<\rho$, which could contradict the preceding facts.

The development at the origin of $B_h(z)$ has non negative
coefficients and the singularity of $B_h(z)$ can only come from the
singularities of the roots $u_i(z)$ or $v_j(z)$ or of cancelations of 
terms $v_m-v_k$ in $Q_k(v_k)$ in Equation (\ref{eq:Piofi}); however
$v_m(z)=v_k(z)$ occurs only at singularities $\zeta=1/P(\upsilon)$ verifying
Equation (\ref{eq:resul}) with $P'(\upsilon)=0$, which is only
possible for $|\upsilon|\geq\rho$.

$B_h(z)$ is  dominated by $G(z,u)$; therefore $B_h(z)$ has radius of convergence
$\rho'\leq\rho$. Since the small root $u_1(z)$ has a singular point at
$z=\rho$, its radius of convergence is $\rho$.

Lemma~\ref{lem:domin} of Banderier-Flajolet~\cite{BanderierFlajolet2002} insures by the triangle inequality 
that for an aperiodic walk $z=\rho$ is the
lone singularity on the circle $|z|=\rho$, corresponding to the root
$u=\tau$ of $P'(u)=0$.

We summarize  this section by the following property.
\begin{property}
\label{prop:domin}
The roots of the kernel equation  $K(z,u)=u^c(1-zP(u))=0$ of an aperiodic walk 
\begin{itemize}
\item have no singularity within the punctured disk 
 $|z|\leq\rho=\dfrac{1}{P(\tau)}\setminus \{z=\rho\}$;
\item the dominant large $v_1(z)$ and small $u_1(z)$ roots of the
  kernel equation have a singularity at $z=\rho$.
 \end{itemize}
\end{property}

\begin{figure}[t!]
\begin{center}
\begin{tikzpicture}[scale=2]
\path(0,-1.5) coordinate (Y1);
\path(0,1.5) coordinate (Y2);
\path(-2,0) coordinate (X1);
\path(-1,1.8) coordinate (R);
\path(4,0) coordinate (X2);
\path(0,0) coordinate (origin);
\path(1,0) coordinate (P1);
\path(0.7071,0.7071) coordinate (P2);
\path(0.97,0.2456) coordinate (rplus);
\path(0.97,-0.2456) coordinate (rminus);
\path(3.0,0.2456) coordinate (Rplus);
\path(2.0,-1.3) coordinate (Ccontour);
\path(2.0,0.2426) coordinate (Rplusmean);
\path(3.0,-0.2456) coordinate (Rminus);
\path(2.0,-0.2456) coordinate (Rminusmean);
\path(3.5,-0.1) coordinate (tauminus);
\path(3.5,0.1) coordinate (tauplus);
\path(3.5,0.18) coordinate (tautau);
\path(-0.15,-0.15) coordinate (OO);
\path(-1.3,-1.3) coordinate(GAMMA);
\path(2,2.4) coordinate (rtozero);
\path(-1.3,2) coordinate (gamma);
\path(-2,2.2) coordinate (stozero);
\begin{scope}[line width=0.1mm,color=black]

\draw[->] (Y1) -- (Y2);
\draw[->] (X1) -- (X2);
\draw[->] (origin) -- node[left=1pt] {$r$} (P2);
\draw[->,green,thick] (rplus) -- (Rplus);
\draw[->,green,thick] (Rminus) -- (rminus);
\draw[->,red] (Rplus) -- (Rminus);
\draw[<->] (Rminusmean) -- node[right=2pt,fill=white] {$2s$}
(Rplusmean);
\draw[-] (tauminus) -- (tauplus);
\node at (tautau) {\large $\rho$};
\node at (OO) {\large $O$};
\node at (GAMMA) {\large $\Gamma_{r}=\overrightarrow{R^-R^+}$};
\node at (gamma) {\large $\begin{array}{l}
               \gamma^+=\overrightarrow{R^+S^+}\\
               \gamma^{\perp}=\overrightarrow{S^+S^-}\\
               \gamma^-=\overrightarrow{S^-R^-}
               \end{array}$};
\node at (Ccontour) {\large $\cC_{r}=\gamma^+\gamma^{\perp}\gamma^{-}\Gamma_{r}$};
\node at (rtozero) {\large $\begin{array}{l}r\rightarrow 0^+,\quad s=r^2\\ 
                                        x=r\cos\arcsin(s/r)\\
                                        R^+: (x,s)\\
                                        R^-: (x,-s)\\
                                        S^+: (y,s),\quad y\in[x,\rho[\\
                                        S^-: (y,-s)\\
                                        P'(\tau)=0,\quad \rho=1/P(\tau)
                           \end{array}$};
\end{scope}
\fill[red] (origin) circle(0.05);

\draw[red] (0.97,0.2431) arc (14.07:345.93:1cm);
\fill[blue] (rplus) circle(0.05);
\fill[blue] (rminus) circle(0.05);
\node at (1.2,0.40) {\large$R^+$};
\node at (1.1,-0.40) {\large$R^-$};
\fill[blue] (Rplus) circle(0.05);
\fill[blue] (Rminus) circle(0.05);
\node at (1.2,0.40) {\large$R^+$};
\node at (1.1,-0.40) {\large$R^-$};
\node at (3.0,0.40) {\large$S^+$};
\node at (3.0,-0.40) {\large$S^-$};
\node at (0,-2.3) {${\displaystyle\left.\begin{array}{l}
           r\rightarrow 0^+\\
           s=o(r^{4r})
          \end{array}\right| \Longrightarrow \left\{
          \int_{\Gamma_{r}}\frac{[u^0]F^{|\geq
        h}(z,u)}{z^{n+1}}dz=o(r^n),\quad\int_{\gamma^{\perp}}\frac{[u^0]F^{|\geq
        h}(z,u)}{z^{n+1}}dz=O(s)=o(r^n)\right\}}$};
\end{tikzpicture}
\end{center}
\begin{picture}(0,0)(0,0)
\put(0,-35){\rule{\textwidth}{0.2mm}}
\end{picture}
\caption{\label{fig:contour}The asymptotic  simplifications 
(see Lemma~\ref{lem:contourrho})}
\end{figure}

\subsection{Asymptotic simplications}
\label{sec:asympsimp}
Equation (\ref{eq:overB}) gives for $B_h(z)=z \overline{B}(z)$
\begin{equation}
\overline{B}_h(z)=\sum_{k=1}^d\sum_{j=1}^c \left(\frac{u_j}{v_k}\right)^h \frac{Q_k(u_j)}{Q_k(v_k)}\frac{u'_j}{v_k}
\end{equation}
Banderier-Nicodeme~\cite{BaNi2010} apply inside the domain
$\widehat{\mathcal{D}}$ verifying
$\widehat{\mathcal{D}}=|z|<\rho$
the following asymptotic simplifications for $j>1, k>1$ and
$h=\Theta(\sqrt{n})$:
\begin{equation}
\label{eq:simpbani}
\left.\begin{array}{l}
\left(\dfrac{u}{v_j}\right)^h =\left(\dfrac{u}{v_1}\right)^h\times\left(\dfrac{v_1}{v_j}\right)^h
 = \left(\dfrac{u}{v_1}\right)^h\times O(\widehat{A}^n),\\[3ex]
u_k^h = u_1^h\left(\dfrac{u_k}{u_1}\right)^h
 = u_1^h\times O(\widehat{B}^{\,n})\end{array}\!\!\!\!\\\\\Longrightarrow\right|
  \ \overline{B}_h(z)=\left(\frac{u_1(z)}{v_1(z)}\right)^h\frac{Q_1(u_1)}{Q_1(v_1)}
\times(1+O(\widehat{C}^{\,n}))
\end{equation}
where $\widehat{C}=\max(\widehat{A},\widehat{B})$ with
 $\widehat{A}:={\displaystyle \max_{2\leq j\leq d} \max_{|z|<
 \rho} \frac{|v_1(z)|}{|v_j(z)|}}$ and  $\widehat{B}:={\displaystyle \max_{2\leq k\leq c} \max_{|z|<
 \rho} \frac{|u_k(z)|}{|u_1(z)|}}$
while $\widehat{A}<1$ and $\widehat{B}<1$ by the domination property of Lemma~\ref{lem:domin}.

However the domination properties 
cannot be extended to the disk $|z|<\rho$, as observed by 
Wallner~\cite{MichaelWallner2018}. Figure~\ref{fig:stokes} (Center and
Right) exhibits a
counter-example
for $P(u)=u+\frac{3}{u}+\frac{1}{u^2}$. For $r=0000.1$ we have
$|u_1(re^{it})|>|u_2(re^{it})|$ if $t\in ]0,\pi[$, but the reverse
    occurs when $t\in ]\pi,2\pi[$. 

We design in Figure~\ref{fig:contour} a contour on which we will apply the
domination property only on a small neighborhood $\mathcal{D}$ of the real segment $]0,\rho[$,
\[ \mathcal{D}=\{z\pm is\}, \quad\text{with}\quad z\in
  ]0,\rho[\text{ and } s\rightarrow 0\]
 over which, by continuity,  this property is valid.
We prove in this section the following.
\begin{lemma}
\label{lem:contourrho}
The integrals of $B_h(z)$ along the
path $\gamma^{\perp}$ and $\Gamma_{r}$ of Figure~\ref{fig:contour}
verify as $r\rightarrow 0$ and $s=o(r^{3n})$
\begin{equation}
\label{eq:u0Fgeqh}
{\it(i)}\ 
 \mathcal{I}_{\perp}=\int_{\gamma^{\perp}}\frac{B_h(z)}{z^{n+1}}dz=O(s)=o(r^n),
\quad(\text{ii})\  \mathcal{I}_r=\int_{\Gamma_{r}}\frac{B_h(z)}{z^{n+1}}dz=o(r^n), 
\end{equation}
and therefore
\begin{equation}
\label{eq:limcauchy}
\frac{1}{2\pi i}\int_{\mathcal{C}_{r}}\frac{B_h(z)}{z^{n+1}}dz
=\frac{1}{2\pi i}\int_{\gamma^+\cup\gamma^-}\frac{B_h(z)}{z^{n+1}}dz+o(r^n).
\end{equation}
\end{lemma}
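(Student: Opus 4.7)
The plan is to bound each of $\mathcal{I}_\perp$ and $\mathcal{I}_r$ by a standard length-times-max-modulus estimate, exploiting two facts: first, $B_h$ is analytic on the open disk $|z|<\rho$ (Property~\ref{prop:domin}), hence bounded on any strictly smaller closed sub-disk; second, $B_h$ has a zero of very high order at the origin, because in (\ref{eq:overB}) the factor $(u_j(z)/v_k(z))^h$ vanishes to order $h(1/c+1/d)$ at $z=0$ by the expansions (\ref{eq:limroots}).

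For $\mathcal{I}_\perp$ the argument is immediate. The segment $\gamma^\perp$ is vertical, of length $2s$, sitting at real part $y\in[x,\rho)$, strictly inside the analyticity disk of $B_h$. Since $B_h$ is continuous on the compact set $\{|z|\le y\}$, there is a constant $M=M(y)$ with $|B_h(z)|\le M$ on $\gamma^\perp$, while $|z|\ge y-s$ there. Hence $|\mathcal{I}_\perp|\le 2sM/(y-s)^{n+1}=O(s)$, and $s=o(r^{3n})$ together with $r^{3n}=o(r^n)$ as $r\to 0^+$ gives $\mathcal{I}_\perp=o(r^n)$.

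For $\mathcal{I}_r$ the substantive step is to quantify the vanishing order of $B_h$ at the origin. From (\ref{eq:limroots}), $u_j(z)=O(z^{1/c})$, $1/v_k(z)=O(z^{1/d})$ and $u_j'(z)=O(z^{1/c-1})$ as $z\to 0$; substitution into (\ref{eq:overB}) shows that every summand is $O(z^{h(1/c+1/d)+1/c+1/d-1})$, whence $B_h(z)=z\overline{B}_h(z)=O(z^{M_h})$ with $M_h=\lceil(h+1)(1/c+1/d)\rceil$. The arc $\Gamma_r$ joining $R^-$ to $R^+$ through the right neighbourhood of $(r,0)$ subtends the angle $2\arcsin(s/r)\sim 2s/r$ at the origin and thus has length $\sim 2s$; combined with the pointwise bound $|B_h(z)|\le Cr^{M_h}$ on $|z|=r$, this yields $|\mathcal{I}_r|\le 2s\cdot Cr^{M_h}/r^{n+1}=O(s\,r^{M_h-n-1})$. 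Substituting $s=o(r^{3n})$ gives $\mathcal{I}_r=o(r^{2n+M_h-1})\subseteq o(r^n)$ as soon as $n+M_h\ge 2$; Equation (\ref{eq:limcauchy}) then follows by decomposing $\int_{\mathcal{C}_r}=\int_{\gamma^+\cup\gamma^-}+\mathcal{I}_\perp+\mathcal{I}_r$ and dividing by $2\pi i$. The main technical obstacle is the precise book-keeping of $M_h$: one must verify that the $h$-th powers $(u_j/v_k)^h$ control the overall order in $z$ and that the ratios $Q_k(u_j)/Q_k(v_k)$ remain bounded as $z\to 0$, so that no cancellation in the double sum (\ref{eq:overB}) spoils the estimate.
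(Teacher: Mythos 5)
Your treatment of part \emph{(i)} is fine and coincides with the paper's: on $\gamma^{\perp}$ the integrand is bounded because the segment sits at a fixed abscissa $y<\rho$ where all roots and their derivatives are finite, and the length $2s=o(r^{3n})$ does the rest. The problem is part \emph{(ii)}: you have misread the contour. In Figure~\ref{fig:contour}, $\Gamma_{r}=\overrightarrow{R^-R^+}$ is \emph{not} the short arc of angular width $2\arcsin(s/r)$ passing through $(r,0)$; it is the complementary, nearly complete circle of radius $r$ around the origin (the limit $\{re^{i\nu}:\nu\in[\arcsin(s/r),2\pi-\arcsin(s/r)]\}$), of length $\sim 2\pi r$. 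It has to be: $\gamma^+\gamma^{\perp}\gamma^-$ alone does not wind around $z=0$, and $\mathcal{C}_r$ must enclose the origin for $\frac{1}{2\pi i}\oint_{\mathcal{C}_r}B_h(z)z^{-n-1}dz$ to extract $b_n^{>h}$ as in Equation~(\ref{eq:finnoper}); under your reading of $\Gamma_r$ the closed contour $\mathcal{C}_r$ would exclude the origin and the whole Cauchy extraction would collapse. With the correct $\Gamma_r$, your length-times-max bound gives $2\pi r\cdot C r^{M_h}/r^{n+1}=O(r^{M_h-n})$, and since $M_h=\Theta\bigl(h(1/c+1/d)\bigr)=\Theta(\sqrt{n})\ll n$, this bound \emph{diverges} as $r\to 0$ instead of being $o(r^n)$. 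So the vanishing order of $B_h$ at the origin, which is the engine of your argument, is far too small to control the integral pointwise.

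The paper's proof of \emph{(ii)} is of a genuinely different nature: it is a cancellation argument, not a modulus bound. The integrand is expanded near $z=0$ into terms $c_{jk}z^{-n+h+\alpha}(1+O(z^{\beta}))$, the substitution $t(\nu)=n(1-e^{i\nu})$ is made, the large powers are expanded as in Equations~(\ref{eq:expzn})--(\ref{eq:expalpha}), and the resulting generic integrals $\int e^{t(\nu)}t^q(\nu)\,dt(\nu)$ are shown to possess exact antiderivatives $e^{-e^{i\nu}}P_q(e^{i\nu})$ that are $2\pi$-periodic in $\nu$; hence the integral over the quasi-circle reduces to the difference of endpoint values across the small angular gap of width $O(s/r)$, which is then $o(r^{2n})$ because $s=o(r^{3n})$ (Equation~(\ref{eq:limraper})). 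In short, the integral over $\Gamma_r$ is small because the contour is \emph{almost closed} and the contributions around it almost cancel, leaving only an endpoint defect controlled by $s$ --- not because the integrand is small on $|z|=r$. Your proof does not contain this idea, and without it part \emph{(ii)} and hence (\ref{eq:limcauchy}) are not established.
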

\begin{proof}
As $r\rightarrow 0^+$ and $s=o(r^{3n})$, the path
$\overrightarrow{R^-R^+}$ has for limit the quasi-circle 
$\mathcal{C}_{r}=\left\{z=r
  e^{i\nu}; \ \nu\in [r,2\pi-r]\right\}.$
We will show that, although surprising at first sight, the integration
along this path has an exponentially small and negligible contribution
to the end result.

Along the segments $R^+S^+$ and $S^-R^-$ the domination property of
the large and small roots of the kernel apply by continuity as $s\rightarrow 0^+$.

The abscissa $y$ of $S^+$ and $S^-$ has been chosen strictly less than
$\rho$, the abscissa of
the critical point; this implies, as $r$ tends to zero, that along the
segment $S^+S^-$ all
the large and small roots and their first 
derivatives are finite\footnote{Following Banderier-Flajolet~\cite{BanderierFlajolet2002}, by differentiating
  $1-zP(u(z))$, we obtain for each branch
  $u'_j(z)=z^2P^{-1}(u_j(z))$. Using the duality
  $\widetilde{P}(u)=P(1/u)$, we obtain a similar property for the
  large roots $v_j$.}. 
Therefore, since  the integrand is finite along this segment
and 
$|S^+S^-|=2s\rightarrow 0^+$, the value of the integral along this
segment is
$o(s)=o(r^{3n})$ as $r\rightarrow 0^+$.
This proves Part (\textit{i}) of the Lemma.

Since $Q_k(u)=\prod_{2\leq m\leq d, m\neq k}(u-v_m(z))$, we have
\begin{equation}
\label{eqovQu}
\frac{Q_k(u_j)}{Q_k(v_{k})}=\frac{\prod_{m=1,m\neq k}^d(u_j-v_m)}{\prod_{m=1,m\neq k}^d (v_{m}-v_k)}
      =\frac{\prod_{m=1}^d(u_j-v_m)}{(u_j-v_k)v_{k}^{d-1}\prod_{m=1,m\neq
          k}^d\left(1-\dfrac{v_m}{v_{k}}\right)}.
\end{equation}
We decompose $\overline{B}_h(z)$ as a sum of products, with 
$\xi:=x\sigma=x\sqrt{P''(\tau)}$ for $x\in]0,\infty[$ and $h=\xi
                                          \sqrt{n}$.
We consider in this section the formal case where the height $h$ is 
any real positive number; the ``combinatorial'' case where $h$ is integer is embedded
in the latter. We refer to Section~\ref{sec:hinteger} of the periodic case  for a
proof
when  $h$ is integer. 
\begin{align}
\label{eq:sBh}
&\frac{\overline{B}_h(z)}{z^{n}}=\frac{1}{z^n}\sum_{k=1}^d\sum_{j=1}^c
            A_{jk}B_{jk}C_{k}D_{jk},\qquad\text{ where }\\
& A_{jk}=\left(\frac{u_j}{v_k}\right)^h, \ 
B_{jk}=\frac{\prod_{m=1}^d(u_j-v_m)}{(u_j-v_k)v_{k}^{d-1}},
\ C_{k}=\!\!\!\!\!\prod_{m=1,m\neq
          k}^d\!\! \left(1-\dfrac{v_m}{v_{k}}\right),\ D_{jk}=\frac{u'_j}{v_k}.
\end{align}
Using Equation (\ref{eq:sumjk}), we obtain upon the contour $\Gamma_{r}=\overrightarrow{R^-R^+}$ a sum of
$dc$ integrals of the type
\begin{equation}
\label{eq_jjprim}
I_{jk}=\frac{1}{2\pi i}\int_{\Gamma_{r}}\frac{1}{z^n}
A_{jk}(z)B_{k}(z)C_{k}(z)D_{jk}(z)dz,\qquad
\mathcal{I}_r=\sum_{j=1}^d\sum_{k=1}^c I_{jk}.
\end{equation}
We expand  each term of the integrand of  $I_{jk}$ in a neighborhood of $z=0$, show that
$B_{k}$ and $C_{k}$ are constants up to negligible terms, and
combine the asymptotics obtained.

We need to define the notations of the second order terms $z^r$ or $z^s$ in the
asymptotic expansions of $\overline{B}_h(z)$ at $z=0$.
\begin{definition}
\label{def:secorder}
Let $\widetilde{r}$ (resp. $\widetilde{s}$) be the 
degree of
the dominant monomial of $P(u)-p_du^d$
$\left(\text{resp. } P(u)-\dfrac{p_{-c}}{u^c}\right)$ as
$u\rightarrow +\infty$ (resp. $u\rightarrow 0$),
and
$r=\max(\widetilde{r},0),\  s=\max(-\widetilde{s},0)$.
\end{definition} 
\begin{example}
\begin{align*}
& P(u)=u^3+u+\frac{1}{u^2}+\frac{1}{u^5},&& r=\widetilde{r}=1, && s=-\widetilde{s}=2,\\
& P(u)=u^3+\frac{1}{u}+\frac{1}{u^2},&& \widetilde{r}=-1,\ r=0,&& s=-\widetilde{s}=1,\\
&P(u)= u^d+\frac{1}{u^c},&&\widetilde{r}=-c,\ r=0, &&-\widetilde{s}=-d,\ s=0.
\end{align*}
\end{example}
With the $c_j$ constants independent of $z$, asymptotics as $z$
tends to zero at first or second order by boot-strapping provides:
\begin{align*}
&A_{jk}=\left(\frac{u_j}{v_k}\right)^{\xi\sqrt{n}}=\frac{\omega_j}{\varpi_k} z^{\xi\sqrt{n}\left(\frac{1}{c} +\frac{1}{d}\right)}
\left(1+O\left(z^{1+\frac{1}{d}+\frac{1}{c}
  -\max(\frac{r}{d},\frac{s}{c})}\right)\right)
\\
&  B_{jk}=\frac{\prod_{m=1}^d(u_j-v_m)}{(u_j-v_k)v_k^{d-1}}=1+O\left(z^{1-\frac{r}{d}}\right),\ 
\text{since } v_m-u_j \sim v_m=c_1
  z^{-1/d}\times\left(1+O\left(z^{1-\frac{r}{d}}\right)\right)    \\
&C_{k}=\prod_{m=1,m\neq
    k}^d\left(1-\frac{v_m}{v_k}\right)=\prod_{m=1,m\neq
    k}(1-v_{m-k})=\prod_{1\leq m \leq d-1}(1-v_{m})=
      d\left(1+O\left(z^{1-\frac{r}{d}}\right)\right)\\
&   D_{jk}=\frac{u'_j}{v_{k}}
 =\frac{1}{c}\frac{\omega_j}{\varpi_k}z^{-1+\frac{1}{c} +\frac{1}{d}}\left(1+O\left(z^{1 -\max(\frac{s}{c},\frac{r}{d})}\right)\right)
\ \text{since }\left\{\begin{array}{l}
    u_j=\omega_j z^{\frac{1}{c} }\left(1+O\left(z^{1-\frac{s}{c}}\right)\right)\\
    u'_j=\frac{1}{c}\omega_jz^{-1+\frac{1}{c} }\left(1+O\left(z^{1-\frac{s}{c} }\right)\right)
 \end{array}\right.
 \end{align*}
Collecting the preceding expansions, we get with $c_{jk}$ a constant
\begin{align}
\label{eq:Isimp} I_{jk}&
   =\frac{c_{jk}}{2\pi i}\int_{\Gamma_{r}}z^{-n}z^{h}
z^{\alpha}\times\left(1+O\left(z^{\beta}\right)\right)dz,
\quad \left\{\begin{array}{l}
\alpha=-1+\frac{1}{c}+\frac{1}{d},\\[2ex]
\nonumber
\beta=1+\frac{1}{d}+\frac{1}{c}-\max\left(\frac{s}{c},\frac{r}{d}\right).
\end{array} \right.\\
& \text{where } -1<\alpha<1 \quad\text{and}\quad 0<\beta<3
\end{align}
To compute ${\displaystyle
  J_{jk}=\frac{c_{jk}}{2i\pi}\oint_{\Gamma_r}z^{-n}z^hz^{\alpha}dz}$ we make the ubiquitous changes
 of variable
\begin{align}
\nonumber
&z=r\left(1-\dfrac{t}{n}\right)\qquad\text{to get an expansion for
    large  } n\\
&z=r e^{i\nu}\label{eq:znu}
\end{align}
We could integrate directly $J_{jk}$ as a function of $\nu$ after the
change of variable $z\leadsto r e^{i\nu}$
 along the path $\Gamma_{r}$,
but terms of the form $\exp(N2i\nu)$, with N a large non
integer number,  have a wild behaviour that
is useless for our needs.
 
Neglecting second order terms, both changes of variables lead to
\[t(\nu):=t= \left(1-e^{i\nu}\right)n\qquad\nu\in[s/r,2\pi-s/r] \]
and to an integration along the quasi-circle $\Gamma_{n}$, obtained
from $\Gamma_{r}$ by a shift $+1$, a symmetry with respect of the line $x=1$,  and a homothety of value $n$. The
resulting contour is centered at
$+1$ and has radius $n$. We remark that $t(0)=t(2\pi)=0$.

We use the standard asymptotic scale for convergence of a discrete walk to a Brownian
motion, which provides for the height $h$,  
\[h=x\sigma\sqrt{n}, \quad \text{with } 
 \left\{\begin{array}{l}
\sigma=\sqrt{P"(1)}\\
 \sigma\text{ standard deviation of the set of the jumps}
 \end{array}\right.
\]
The expansions for large $n$ of $(z/r)^{-n}, (z/r)^h, (z/r)^{\alpha}$ respectively are
\begin{eqnarray}
\label{eq:expzn}
&&\left(1-\frac{t}{n}\right)^{-n}=\exp(t)\left(\sum_{\ell \geq
  0}\frac{E_\ell (t)}{n^\ell }\right)\\
\label{eq:expsigma}
&&\left(1-\frac{t}{n}\right)^{\xi
  \sqrt{n}}=\sum_{\ell \geq 0}\frac{\Xi_\ell (t)}{n^{\ell /2}}\qquad
(\xi=x\sigma)\\
\label{eq:expalpha}
&&\left(1-\frac{t}{n}\right)^{\alpha}
  =\sum_{\ell \geq 0}(-1)^\ell  
              \frac{t^\ell }{\ell ! n^\ell }\frac{\Gamma(\alpha+\ell )}{\Gamma(\alpha)}\\[1ex]
&&\text{where }  E_\ell (t) \text{ and } \Xi_\ell (t) \text{ are polynomials of
                degree at most } 2\ell 
\end{eqnarray}
Collecting these asymptotics, we  identify $s/r$ and
$\arcsin(s/r)$ as $s/r \rightarrow 0$.
We set $M=n-\xi\sqrt{n}-\alpha$, and  we obtain with $\alpha_{g}$ and
$\eta_{g,q}$ constants, $j\in\{1,..,c\}$ and $k\in\{1,..d\}$,
\begin{eqnarray}
\label{eq:sumasympt}
&\dfrac{2i\pi}{c_{jk}} \times J_{jk}&
   =\int_{\Gamma_{r}}z^{-n}z^{h}
z^{\alpha}dz=
\sum_{g>0}
  \int_{s/r}^{2\pi-s/r}\!\!\!\!\!r^{-M}\alpha_{g}\frac{1}{n^{g/2}}\sum_{q\leq
    2g}\eta_{g,q}e^{t(\nu)}t^q(\nu)\frac{dt(\nu)}{d\nu}d\nu
  \\
\nonumber
&&=\sum_{g>0}\alpha_{g}\frac{r^{-M}}{n^{g/2}}J_{jk,g},
\quad \text{where}
\quad
J_{jk,g}=\sum_{q\leq
    2g}\eta_{g,q}
\int_{s/r}^{2\pi-s/r}\!\!\!e^{t(\nu)}t^q(\nu)\frac{dt(\nu)}{d\nu}d\nu
\end{eqnarray}
We prove next that $J_{jk,g}=o(r^{2n})$ as $s=o(r^{3n})$ and
$r\rightarrow 0$. We integrate the generic term
\[M_k =\int_{s/r}^{2\pi-s/r} e^{t(\nu)}t^k(\nu)dt(\nu).\]
Since $t(\nu)=n(1-e^{i\nu})$, we do the change of variable $t(\nu)= n
s(\nu)$, and we integrate as follows,
\begin{align}
\label{eq:Ek}
\frac{E_k}{n}=\int e^{s(\nu)} s^k(\nu)ds(\nu) 
&=\int
  \left(1-e^{i\nu}\right)^k e^{\displaystyle
    -e^{i\nu}}(-i e^{i\nu})d\nu.\\
\nonumber &= (1-e^{i\nu})^ke^{\displaystyle -e^{i\nu}}
      -\int k(1-e^{i\nu})^{k-1} e^{\displaystyle
        -e^{i\nu}}(-ie^{i\nu})d\nu\\
     &=(1-e^{i\nu})^ke^{\displaystyle -e^{i\nu}}+k E_{k-1}\\
     &=e^{\displaystyle -e^{i\nu}}P_k(e^{i\nu}), 
\end{align} 
where $P_k(x)$ is a polynomial of degree $k$ with minimum degree at
least  $1$
 and  
coefficients bounded by $n^k$.

The periodicity of the
 trigonometric function $e^{i\nu}$ provides for $M_k$
with $s=o(r^{3n})$ and $r\rightarrow 0$
\begin{equation}
\label{eq:limraper}
\left[e^{-e^{i\nu}}\nu^j\right]_{\nu=s/r}^{2\pi-s/r}=-2i(s/r)^j e^{-1}+O((s/r)^{j+1})=o(r^{2n})
  \Longrightarrow\left\{\begin{array}{l}
  \Big[E_k\Big]_{\nu=s/r}^{2\pi-s/r}=o(r^{2n})\\[2ex]
   J_{jk,g}=o(r^{2n}),\end{array}\right.
\end{equation} 
\begin{equation}
\label{eq:limint0}
J_{jk}=o(r^n) \qquad\text{and} \qquad\cJ_{r}= \sum_{j=1}^d\sum_{k=1}^c
J_{jk}=
\sum_{k}c_{jk}\frac{1}{2i\pi}\int_{\Gamma_{r}}z^{-n}z^hz^{\alpha}dz=o(r^n).
\end{equation}
We expand Equation (\ref{eq:Isimp}) to handle the error term,
\begin{equation}
\label{eq:errnoper}
I_{jk}
   =J_{jk}+\frac{1}{2\pi
  i}\int_{\Gamma_{r}}O\left(z^{-n+h+\alpha+\beta}\right)dz,
\end{equation}
with ${\displaystyle
|\alpha|<1\text{ and }  0<\beta<3}$.
We use Theorem VI.9 (Singular integration) of
Flajolet-Sedgewick~\cite{FlajoletSedgewick2009}
which states the following:

{\sl Let $f(z)$ be $\Delta$-analytic and admit an expansion near its
  singularity of the form
\[f(z)=\sum_{j=0}^J c_j (1-z)^{\alpha_j}+O\left((1-z)^A\right).\]
Then $\int_0^z f(t)dt$ is $\Delta$-analytic. Assume that none
of the quantities $\alpha_j$ and $A$ equal $-1$

If $A<1$ the singular expansion of $\int f$ is}
\[\int_0^z f(t)dt =
  -\sum_{j=0}^J\frac{c_j}{\alpha_j+1}(1-z)^{\alpha_j+1}+O\left((1-z)^{A+1}\right).
\]
We apply this theorem to the BigO term of Equation
(\ref{eq:errnoper}) by shifting the origin to any real point,
$z\leadsto z-\alpha$, which gives
\[\mathcal{I}_O=\int O\left(z^{-n+h+\alpha+\beta}\right)dz=
O\left(z^{-n+h+\alpha+\beta+1}\right)\]
Expanding $z^n$, $z^{h}$ and $z^{\alpha+\beta+1}$
as in Equations
(\ref{eq:expzn},\ref{eq:expsigma},\ref{eq:expalpha}), and making the
developments that follow until Equation (\ref{eq:limraper}) leads to
\begin{equation}
\label{eq:limomega}
\Big[\mathcal{I}_O\Big]_{s}^{2\pi-s}=o(r^n) \quad \text{as} \quad
 n\rightarrow \infty,\quad s=o(r^{4n}),\text{ and } r\rightarrow 0.
\end{equation}
When the contour $\Gamma_{r}$ is shrunk to zero, 
we have therefore
 $I_{jk}=o(r^n)$
where $I_{jk}$ has been defined in Equation (\ref{eq:Isimp}).
\end{proof}

\subsubsection{Using the domination property}
\label{sec:ssasympaper}
Lemma~\ref{lem:contourrho} gives
us\footnote{Banderier-Nicodeme~\cite{BaNi2010} provide
Sections~\ref{sec:ssasympaper} and~\ref{sec:semilarge} when
 $\tau=\rho=1$.}, 
\begin{equation}
\label{eq:intgamma}
\frac{1}{2\pi i}\oint_{\mathcal{C}_{r}}\frac{B_h(z)}{z^{n+1}}dz
=\frac{1}{2\pi i}\left[\int_{\gamma_+}+\int_{\gamma_-}\right]\frac{B_h(z)}{z^{n+1}}dz+o(r^n),\quad
B_h(z)=[u^0]F^{[>h]}(z,u).
\end{equation}
As $s$ tends to zero, 
on the segments of integration $\gamma_+$ and $\gamma_{-}$, the
domination property of the roots of the kernel applies, namely,
\begin{equation}
|u_j(z)|< u_1(z) < v_1(z) < |v_k(z)|, \quad \left\{\begin{array}{l}
           u_1(z) \text{ dominant small kernel root}\\ 
           v_1(z) \text{ dominant large kernel root}\\
           j\neq 1,\ k\neq 1
           \end{array}\right.
\end{equation}
Since, as $s\rightarrow 0^+$, along $\gamma^+$ and $\gamma^-$,  we have
\[\left(\frac{u_j(z)}{v_k(z)}\right)^h=O(A^h), \quad A=\max\left|\frac{u_j}{v_k}\right|<1\text{ for }  j\neq 1 
\text{ or } k\neq 1\text{ and }z\in[r\cos(s),\rho[,\]
and therefore, Equation (\ref{eq:overB}) verifies with $A<1$
\begin{equation}
\label{eq:Bhsimp}
\overline{B}_h(z)=\sum_{k=1}^d\sum_{j=1}^c \left(\frac{u_j}{v_k}\right)^h
\frac{Q_k(u_j)}{Q_k(v_k)}\frac{u'_j}{v_k}=\left(\frac{u_1(z)}{v_1(z)}\right)^h\frac{Q_1(u_1(z))}{Q_1(v_1(z))}\frac{u'_1(z)}{v_1(z)}+O(A^h)
\end{equation}
We are in the domain of semi-large powers with $h=\Theta(\sqrt{n})$
(see~\cite{FlajoletSedgewick2009} Section IX.11.2), and the dominant
asymptotic terms comes from the dominant singularity of $B_h(z)$
located a $z=\rho$.

We follow Banderier-Flajolet~\cite{BanderierFlajolet2002} and  expand $\dfrac{1}{P(u)}$ in
 the neighborhood of $u=\tau$, the  exceptional point
 corresponding to the lone singular point $z=\rho=\dfrac{1}{P(\tau)}$
 of the kernel equation on the circle $|z|=\rho$, and invert next
 $z-\dfrac{1}{P(u)}=0$ as a function $u(z)$.
We observe that $P''(\tau)>0$ for $u\in\bR^+$ and that the non dominant roots $u_j(z)$
with $j>1$ and $v_k(z)$ with $k>1$ are regular at $z=\rho$.
\begin{align}
\nonumber
&z=\frac{1}{P(u)}=\frac{1}{P(\tau)}-\frac{P''(\tau)(u-\tau)^2}{2P(\tau)}+O((u-\tau)^3)\\
& \Longrightarrow\left\{\begin{array}{l}
       u_1(z)=\tau-\dfrac{\sqrt{2\rho}}{\sigma}\sqrt{1-z/\rho}+O(1-z/\rho),\\[2ex]
       v_1(z)=\tau+\dfrac{\sqrt{2\rho}}{\sigma}\sqrt{1-z/\rho}+O(1-z/\rho),\\[2ex]
       \dfrac{u'_1(z)}{v_1(z)}=\dfrac{1}{\sqrt{2}\sigma\rho^{3/2}\tau\sqrt{1-z/\rho}}
           \times\left(1+O\left(\sqrt{1-z/\rho}\right)\right)
   \end{array}\right.\qquad \begin{array}{l}
\rho=\dfrac{1}{P(\tau)},\\[3ex] \sigma=\sqrt{P''(\tau)}
\end{array}
\nonumber
\end{align}
Since ${\displaystyle Q_1(u)=\prod_{2\leq k \leq d}(u-v_k(z))=\sum_{m=0}^{d-1}q_m(z)u^m}$,
we obtain\footnote{Taking expansions of $u_1(z)$,
$v_1(z)$ and $Q_1(u_1(z))/Q(v_1(z))$ at $z=\rho$  at higher order would produce
a real series where the coefficient of terms like $(1-z/\rho)^{k/2}$
are symmetric functions of $u_2(\rho),\dots,u_c(\rho),v_2(\rho),\dots,v_d(\rho)$.} at order 1
\begin{equation}
\label{eq:QoverQ}
\frac{Q_1(u_1(z))}{Q_1(v_1(z))}=\frac{Q_1(\tau)+O(\sqrt{1-z/\rho})}{Q_1(\tau)+O(\sqrt{1-z/\rho})}
       = 1+O(\sqrt{1-z/\rho}) \quad \text{ as }z\sim \rho^-.
\end{equation}
On the other hand,
\begin{equation}\label{eq:uvh}
\left( \frac{u_1}{v_1}\right)^h
 = \left(1- \frac{2\sqrt{2\rho}}{\tau\sigma}
 \sqrt{1-z/\rho}\right)^h\times\left(1+O\left(\sqrt{1-z/\rho}\right)\right)
 \qquad \text{for} \ z\sim \rho^{-},
\end{equation}
Collecting the expansions in the neighborhood of $z=\rho$, we get
\begin{equation}
\label{eq:collectrho}
B_h(z)=F_0^{>h}(z)=\left(1-\frac{2\sqrt{2}\sqrt{1-z/\rho}}{\tau\sigma\sqrt{\rho}}\right)^h\frac{1}{\sigma\tau\rho^{3/2}\sqrt{2}\sqrt{1-z/\rho}}\times\left(1+O(\sqrt{1-z/\rho})\right).
\end{equation}

\subsection{Semi-large powers and Hankel integrations}
\label{sec:semilarge}
\noindent
We compute now asymptotically $b_n^{>h}=[z^n]F_0^{[>h]}(z)$ for large
$n$  when $h= x
\sigma\sqrt{n}$ and $x\in\bR^+$, 
the convergence range to the Brownian limit.
By the usual process of singular analysis (see Flajolet-Sedgewick
book~\cite{FlajoletSedgewick2009} Theorem VI.3 and VI.5 - Transfers
and
 Multiple Singularities),
we deform the contour $\mathcal{C}_r$ to a $\Delta$-contour
$\Gamma_{\Delta}$ consisting of set of Hankel 
contours\footnote{\label{foot:zeta}It may occur that some singularities $\widetilde{\zeta}_{ij}$ are
located on the semi-infinite ray $\mathcal{R}_i=\zeta_i\infty_i$ with direction $O\zeta_i$. 
These singularities are ``swallowed'' by the Hankel integral $\mathcal{H}_i$, since an algebraic function is analytic apart on its
singularities and therefore remains analytic at points not belonging
to the ray $\mathcal{R}_i$, whatever close to this ray.
} $\mathcal{H}_i$,
each of which winding
around a  singular point $z=\zeta_i$, and connecting paths at infinity
the
contribution of which  is zero. We observe that
$\overline{B}_h(z)$ is analytic within the contour $\Gamma_{\Delta}$. 
One of the Hankel contours is the dominant one, winding around the dominant
singularity
$z=\rho$.
By Assumption~\ref{as:norep} 
 {\bf there are no singular algebraic points of order larger
than one}, ({\it i.e} $P(u)$ has no repeated factors\footnote{It is
easy to construct characteristic polynomials that do not verify this
condition; as instance any power
 ${\displaystyle \left((u+1/u)/2\right)^k}$ of the Dyck polynomial for $k>1$.} over $\bC$). The
singular points $\zeta_i$ are of order 1, and the secondary Hankel integrals
$\mathcal{H}_i$ with $i>1$ and the dominant one $\mathcal{H}_0$
are computed similarly; the former  ones
provide
exponentially small contribution with respect to the latter.
\begin{example}
\label{ex:Hj}
Taking $P(u)$ of  Example~\ref{ex:negcoef}, we have
\[
P(u)=\frac{17}{24}u+\frac{1}{6u^2}+\frac{1}{8u^3},
  \quad P'(u)=\frac{17}{24}-\frac{1}{3u^2}-\frac{3}{8u^4};\]
The roots $\tau_i$ of $P'(u)=0$ and the singular points
$\zeta_i=1/P(\tau_i)$ verify with
\small
\[A=\frac{(17918+5202\sqrt{19})^{1/3}}{51},\quad
B=\frac{17}{3(17918+5202\sqrt{19})^{1/3}},\quad I=e^{i\pi /2},\]
\ \\[-6ex]
\begin{align*}
\tau_0&=1, &\zeta_0&=\rho=1\\[-1ex]
\tau_1&=-\frac{1}{3}-2\left(\frac{A}{2}-B\right),&\zeta_1 &\approx -1.927703810,\\
\tau_2&=-\frac{1}{3}+\frac{A}{2}-B-I\frac{\sqrt{3}}{2}(A+2B),&\zeta_2
&\approx -0.2861480946 + 1.107549741I,\\
\tau_3&=-\frac{1}{3}+\frac{A}{2}-B+I\frac{\sqrt{3}}{2}(A+2B),&\zeta_3n
&\approx -0.2861480946 - 1.107549741I.\\
&&&
\end{align*}
\normalsize
\end{example}
\noindent
We have then with $b=c+d$ the number of roots $u(z)$ of the kernel 
equation (see footnote~\ref{foot:zeta})
\begin{equation}
\label{eq:finnoper}
b_n^{>h}=\frac{1}{2i\pi}\oint_{\mathcal{C}_r}\frac{B_h(z)}{z^{n+1}}dz=\mathcal{I}_0+\sum_{j=1}^{e}\mathcal{I}_j,\quad\text{where}\quad
\mathcal{I}_j=\frac{1}{2\pi
  i}\oint_{\mathcal{H}_j}\frac{B_h(z)}{z^{n+1}}dz,\quad e\leq b-1,
\end{equation}
and $\mathcal{C}_r$ is the contour defined in Figure~\ref{fig:contour}.

We develop the computation of the dominant Hankel integral
${\displaystyle\mathcal{I}_0=\frac{1}{2\pi
    i}\oint_{\mathcal{H}_0}\!\!\!\frac{\overline{B}_h(z)}{z^{n}}dz}$
by  following  the proof of Banderier {\it et al}~\cite{BaFlScSo2001}
which refers to semi-large powers (see Theorem
IX.16 of~\cite{FlajoletSedgewick2009}).

Using the change of variable $z=\rho\left(1-\dfrac{t}{n}\right)$, taking an
expansion for large $n$ of the integrand $\overline{B}_h(z)/z^n$ of
$\mathcal{I}_0$, we
have
\begin{equation}
\label{eq:raptoh}
\left(\frac{u_1(z)}{v_1(z)}\right)^{x\sigma\sqrt{n}}=e^{-2\sqrt{2}\sqrt{t}x\sqrt{\rho}/\tau}\times\left(1+O\left(\frac{1}{\sqrt{n}}\right)\right),\qquad (\rho=1/P(\tau)),
\end{equation}
and we obtain
\begin{equation}
\label{eq:b_nbeforehankel}
\mathcal{I}_0
=\frac{1}{2\pi i}\oint_{\mathcal{H}_0}\!\!\!\frac{\overline{B}_h(z)}{z^{n}}dz=
\rho^{-n}\frac{1}{2 \pi i}\frac{1}{\sigma\tau\sqrt{\rho}}\oint_{\mathcal{\widetilde{H}}_0}\! 
 \frac{1}{\sqrt{2}\sqrt{n}}\frac{e^t e^{-2\xi\sqrt{2t}}}{\sqrt{t}}\times\!
 \left(\!1\!+\!O\left(\frac{1}{\sqrt{n}}\right)\!\right)dt,
\end{equation}
where $\xi=x\sqrt{\rho}/\tau$ and $\mathcal{\widetilde{H}}_0$ is a Hankel contour winding clockwise from $-\infty$
around the origin.

Theorem~\ref{theo:BaFlaaperunb} states that the number of unbounded
bridges of length $n$ verifies
\begin{equation}
\label{eq:bninf}
b_n^{<\infty}=V_n=\frac{\rho^{-n}}{\sigma\sqrt{2\pi\rho
    n}}\left(1+O\left(\frac{1}{n}\right)\right)\qquad\text{with }
\rho=\frac{1}{P(\tau)},\  \sigma=\sqrt{P''(\tau)}.
\end{equation}
Expanding $e^{-2\xi\sqrt{2t}}$, making the substitution $t \leadsto -t$, integrating term-wise,
and using the Hankel contour representation\footnote{See a proof 
of this representation in Flajolet-Sedgewick~\cite{FlajoletSedgewick2009}, p. 745.} 
for the Gamma function, 
\begin{equation}
\label{eq:GammaHankel}
G(s)=\frac{1}{\pi}\sin(\pi s)\Gamma(1-s)=-\frac{1}{2 i
  \pi}\int_{+\infty}^{(0)}(-t)^{-s}e^{-t}dt,\  \text{for all\ } s \in
\bC,\quad G(-1/2)=\frac{-1}{2\sqrt{\pi}},
\end{equation}
the computation of  $\mathcal{I}_0$ gives 
\begin{align}
\label{eq:I1afterhankel}
\mathcal{I}_0 / b_n^{<\infty}
 & = -\frac{\sqrt{\pi}}{2 \pi i}\sum_{j=0}^{\infty}(-1)^j\frac{2^j (\sqrt{2}\xi)^j}{j!}
 \int_{+\infty}^{(0)}e^{(-t)}(-t)^{(j-1)/2}dt
 \times\left(1 +O\left(\frac{1}{\sqrt{n}}\right)\right)\\
 & = \sum_{k=0}^{\infty}(-1)^k \frac{(\sqrt{2}\xi)^{2k}}{k!}
 \times\left(1 +O\left(\frac{1}{\sqrt{n}}\right)\right)= e^{-2\xi^2} 
 \times\left(1 +O\left(\frac{1}{\sqrt{n}}\right)\right)\\
\label{eq:xrhotau}
& = e^{-2x^2\rho/\tau^2}\times\left(1 +O\left(\frac{1}{\sqrt{n}}\right)\right).
\end{align}
But we also have
$\dfrac{|\mathcal{I}_j|}{\mathcal{I}_0}=\left(\dfrac{\rho}{|\zeta_j|}\right)^n=O(B^n)$
with $B<1$, which leads 
to the following theorem.
\begin{theorem}
\label{theo:rayleighnoper}
For walks with non-periodic sets of jumps and characteristic polynomials
verifying Assumption~\ref{as:norep}, as $n\rightarrow\infty$,  the probability $\beta_n^{>x\sigma\sqrt{n}}$ that a bridge of length $n$
goes upon the barrier $y=x\sigma\sqrt{n}$ follows a Rayleigh limit
law for $x\in ]0,+\infty[$
\begin{equation}
\label{eq:Ibnafterhankel}
\beta_n^{>x\sigma\sqrt{n}}=\frac{b_n^{>h}}{b_n^{<\infty}}= 
  \frac{\mathcal{I}_0}{b_n^{<\infty}}\times(1+O(B^n))=
 e^{-2x^2\rho/\tau^2} 
 \times\left(1 +O\left(\frac{1}{\sqrt{n}}\right)\right)\quad (B<1),
\end{equation}
where $b_n^{<\infty}$ verifies Equation (\ref{eq:bninf}).
\end{theorem}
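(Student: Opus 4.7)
\bigskip\noindent
\textbf{Proof plan.} The plan is to assemble the ingredients of Sections~\ref{sec:bridgoverh}--\ref{sec:semilarge} around the Cauchy coefficient extraction
$$b_n^{>h}=\frac{1}{2\pi i}\oint_{\mathcal{C}_r}\frac{B_h(z)}{z^{n+1}}\,dz$$
along the quasi-circle of Figure~\ref{fig:contour}. First I deform $\mathcal{C}_r$ into the composite contour $\Gamma_r\cup\gamma^+\cup\gamma^\perp\cup\gamma^-$. Lemma~\ref{lem:contourrho} ensures that, as $r\to 0^+$ with $s=o(r^{3n})$, the integrals along the small arc $\Gamma_r$ and the perpendicular segment $\gamma^\perp$ contribute only $o(r^n)$, so that only the two nearly real segments $\gamma^+$ and $\gamma^-$ actually matter.

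On $\gamma^+\cup\gamma^-$ the domination property of Lemma~\ref{lem:domin} applies by continuity as $s\to 0^+$. I use it in the form~(\ref{eq:Bhsimp}) to collapse the double sum defining $\overline{B}_h(z)$ to the single summand indexed by $(j,k)=(1,1)$, up to a multiplicative error of order $O(A^h)$ with $A<1$; since $h=x\sigma\sqrt{n}$, this is exponentially small relative to the expected $\rho^{-n}$ growth. Property~\ref{prop:domin} and Assumption~\ref{as:norep} then permit a classical singularity-analysis deformation to a $\Delta$-contour consisting of Hankel loops $\mathcal{H}_j$ around the algebraic singularities $\zeta_j=1/P(\upsilon_j)$, connected by arcs at infinity whose contribution is zero. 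The loops with $j\ne 0$ are exponentially subdominant, giving $|\mathcal{I}_j|/\mathcal{I}_0=O((\rho/|\zeta_j|)^n)=O(B^n)$ with $B<1$.

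Evaluating the dominant loop $\mathcal{I}_0$ around $z=\rho$ is then the semi-large powers computation of Theorem~IX.16 of~\cite{FlajoletSedgewick2009}. Inserting the local expansions~(\ref{eq:collectrho}) of $u_1$, $v_1$, $Q_1(u_1)/Q_1(v_1)$ and $u'_1/v_1$ and performing the change of variable $z=\rho(1-t/n)$ yields
$$\bigl(u_1(z)/v_1(z)\bigr)^{x\sigma\sqrt{n}}=\exp\bigl(-2\sqrt{2t}\,x\sqrt{\rho}/\tau\bigr)\bigl(1+O(n^{-1/2})\bigr),$$
so that $\mathcal{I}_0$ reduces to the Hankel integral~(\ref{eq:b_nbeforehankel}). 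Expanding the exponential factor with $\xi=x\sqrt{\rho}/\tau$ as a power series in $\sqrt{t}$ and integrating term by term against the representation~(\ref{eq:GammaHankel}) of $1/\Gamma$, the odd-$j$ terms vanish because $1/\Gamma$ has zeros at non-positive integers, while the even-$j$ contributions telescope to $\sum_{k\ge 0}(-1)^k(2\xi^2)^k/k!=e^{-2\xi^2}=e^{-2x^2\rho/\tau^2}$. Dividing by the Banderier--Flajolet asymptotic~(\ref{eq:bninf}) of $b_n^{<\infty}$ delivers the Rayleigh limit, with relative error $O(n^{-1/2})+O(B^n)=O(n^{-1/2})$.

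The main obstacle, already handled by Lemma~\ref{lem:contourrho}, is that the naive asymptotic simplification~(\ref{eq:simpbani}) of~\cite{BaNi2010} fails outside a neighbourhood of the real segment $(0,\rho)$, as witnessed by Wallner's counter-example in Figure~\ref{fig:stokes}. The contour $\mathcal{C}_r$ must therefore be bent so as to simultaneously remain within the region where the domination property holds (so that~(\ref{eq:Bhsimp}) can be applied on $\gamma^\pm$), capture the dominant singular behaviour at $z=\rho$, and render the contribution of the excised ``bad'' arc $\Gamma_r$ negligible at rate $o(r^n)$. Once this geometric compromise is secured, the rest of the argument is routine singularity analysis in the semi-large powers regime.
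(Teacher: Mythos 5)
Your proposal is correct and follows essentially the same route as the paper: the contour of Figure~\ref{fig:contour} together with Lemma~\ref{lem:contourrho} to reduce to $\gamma^+\cup\gamma^-$, the domination property yielding~(\ref{eq:Bhsimp}), the $\Delta$-contour deformation into Hankel loops with exponentially subdominant $\mathcal{I}_j$ for $j\neq 0$, and the semi-large-powers evaluation of $\mathcal{I}_0$ via the Hankel representation of $1/\Gamma$ leading to $e^{-2\xi^2}=e^{-2x^2\rho/\tau^2}$. The identification of the central difficulty (the failure of the naive domination argument of~\cite{BaNi2010} off the real segment) also matches the paper's motivation for the contour design.
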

\begin{remark}
\label{rem:rayleighnoper}
As observed in Banderier-Nicodeme~\cite{BaNi2010}, this result is
independent of the drift  $P'(1)$ of the walk; it is also independent of the standard
deviation $\sigma=\sqrt{P''(\tau)}$.
\end{remark}
As a consequence of the preceding theorem, in the probabilistic
setting where  $P(1)=1$ and with zero drift $P'(1)=0$ implying $\rho=1$, we have 
as in Banderier-Nicodeme~\cite{BaNi2010}.
\begin{theorem}
\label{theo:stringrayleigh}
Considering an i.i.d. integer valued random variable $X_i=P(u)$ with expectation
$\Ex(X_1)=0$ and standard deviation $\sigma=\sqrt{P"(1)}$, where $P(u)$
is a Laurent polynomial defined as in Equation (\ref{eq:charpoly}) and
verifying Assumption~\ref{as:norep}, we have
for $S_k=\sum_{1\leq i \leq k }X_i$ a
Rayleigh law
\begin{equation}
\label{eq:stringrayleigh}
\lim_{n\rightarrow \infty}\Pr\left(S_n=0,\max _{0\leq k\leq n}S_k> x\times
\sigma\sqrt{n}\right)=e^{-2x^2}\times\left(1+O\left(\frac{1}{\sqrt{n}}\right)\right)
                                        \qquad x\in]0,+\infty[,
\end{equation}
\end{theorem}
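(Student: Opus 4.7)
The plan is to derive Theorem \ref{theo:stringrayleigh} as a direct probabilistic specialization of Theorem \ref{theo:rayleighnoper}. Under the hypothesis that the coefficients of $P(u)$ form a probability distribution ($P(1)=1$), the combinatorial recursion (\ref{eq:fnu}) coincides with the recursion for $\Pr(S_n = j)$ where $S_k=X_1+\cdots+X_k$ is the random walk with jump law $\Pr(X_1=i)=p_i$. In this interpretation $b_n^{<\infty}=\Pr(S_n=0)$ and the bridge-overshoot quantity $b_n^{>h}$ becomes the joint probability $\Pr\!\left(S_n=0,\ \max_{0\leq k\leq n}S_k > h\right)$.

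The first step is to identify the structural constants $\tau$ and $\rho$ of Theorem \ref{theo:rayleighnoper} in the centered probabilistic setting. Because every $p_i$ is non-negative and $c,d\geq 1$, the term $c(c+1)p_{-c}u^{-c-2}$ alone forces $P''(u)>0$ on $(0,\infty)$; together with $P(u)\to +\infty$ at both ends of the positive real axis this yields a unique positive critical point. The hypothesis $P'(1)=0$ then identifies $\tau=1$, whence $\rho = 1/P(\tau)=1$. In particular the exponent $2\rho/\tau^2$ appearing in Theorem \ref{theo:rayleighnoper} reduces to $2$.

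Second, I would apply Theorem \ref{theo:rayleighnoper} with $h = \lceil x\sigma\sqrt{n}\rceil$. This yields
\[
\frac{\Pr\!\left(S_n=0,\ \max_{0\leq k\leq n} S_k > x\sigma\sqrt{n}\right)}{\Pr(S_n = 0)}
= \frac{b_n^{>h}}{b_n^{<\infty}}
= e^{-2x^2}\!\left(1+O\!\left(\tfrac{1}{\sqrt{n}}\right)\right),
\]
which is the claimed Rayleigh law (the displayed equation of the theorem being understood, as is customary for conditional asymptotics, up to the normalization $\Pr(S_n=0)\sim 1/(\sigma\sqrt{2\pi n})$ provided by Equation (\ref{eq:bninf})).

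The main obstacle is essentially verification rather than new analysis. One must check that the aperiodicity required by Theorem \ref{theo:rayleighnoper} is consistent with the hypotheses of the present theorem (it concerns $P$ alone and coexists with Assumption \ref{as:norep}), and that integer rounding of the barrier $h = \lceil x\sigma\sqrt{n}\rceil$ does not disturb the error term: a unit shift in $h$ multiplies $(u_1/v_1)^h$ by a factor $1+O(1/\sqrt{n})$ through the expansion (\ref{eq:uvh}), which is absorbed into the stated $O(1/\sqrt n)$. Once these checks are in place, the statement is an immediate corollary.
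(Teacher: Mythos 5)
Your proposal is correct and follows essentially the same route as the paper, which states this theorem as an immediate consequence of Theorem~\ref{theo:rayleighnoper} in the probabilistic setting $P(1)=1$, $P'(1)=0$, whence $\tau=\rho=1$ and the exponent $2\rho/\tau^2$ reduces to $2$. Your additional checks (convexity of $P$ forcing $\tau=1$, the interpretation of $b_n^{>h}/b_n^{<\infty}$ as a conditional probability, and the harmlessness of rounding $h$ to an integer) are details the paper leaves implicit but are consistent with its argument.
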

\subsection{Points tight to the Brownian}
\label{sec:tight}
\label{rem:strong}
The strong embedding theorem of
Koml\'os-Major-Tusn\'ady~\cite{KoMaTu75} of which 
Chatterjee~\cite{Chatterjee2012} gave a modern approach provides the
following:
\begin{theorem}
\label{theo:strong}
Given  i.i.d. random
variables $\epsilon_i,\epsilon_2,\dots$ such that
$\Ex(\epsilon_1)=0, \Ex(\epsilon_1^2)=1$ and
 $\Ex \exp\theta|\epsilon_1|<\infty$ for some
$\theta>0$,
it is possible to construct a version of $(S_k)_{0\leq k\leq n}$
 with $S_k =\sum_{i=1}^k \epsilon_i$ and  a standard Brownian motion
 $(B_t)_{0\leq t\leq n}$ on the same probability space such that for
 all $x>0$,
\begin{equation}
\label{eq:strBn}
 \Pr\left(\max_{k\leq n}|S_k-B_k|\leq C\log n +x\right)\leq
 Ke^{-\lambda x},
\end{equation}
where $C,K$ and $\lambda$ do not depend on $n$.
\end{theorem}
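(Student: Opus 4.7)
The theorem as stated is the Komlós--Major--Tusnády strong invariance principle and is invoked here as an external probabilistic input from \cite{KoMaTu75,Chatterjee2012}; the paper does not reprove it, so the ``proposal'' below is a sketch of the ideas in Chatterjee's construction. I should flag that the displayed inequality appears to be reversed inside the probability event: the content of KMT, and what the construction below actually delivers, is the bound $Ke^{-\lambda x}$ on the probability of the \emph{complementary} event $\{\max_{k\le n}|S_k-B_k| > C\log n + x\}$. I proceed on that reading, since the literal statement is clearly vacuous for large $x$ (its left-hand side tends to $1$ while the right-hand side tends to $0$).

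First I would collect the probabilistic inputs made available by the exponential-moment hypothesis $\mathbb{E}\exp(\theta|\epsilon_1|)<\infty$: sub-Gaussian tails for the centred partial sums on every scale, existence of all moments, and a Cramér-type Edgeworth expansion for the density of $S_m/\sqrt{m}$ valid uniformly on windows of polynomial size in $\sqrt{m}$, with exponentially small remainder. These are the analytic tools that feed the coupling at each dyadic scale.

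The heart of Chatterjee's proof is a dyadic coupling. Writing $n=2^N$ for convenience, one first couples $S_n$ with $B_n$ at the coarsest scale by a quantile transform, so that the discrepancy has a sub-exponential tail of the desired form. Descending recursively, on each dyadic sub-interval $[a,b]$ with $b-a=2^j$ one couples the conditional law of the mid-point $S_{(a+b)/2}$ given $(S_a,S_b)$ to the conditional law of the Brownian mid-point $B_{(a+b)/2}$ given $(B_a,B_b)$, again via a quantile transform of densities. The Edgeworth expansion quantifies this step: at scale $2^j$ it produces a mid-point discrepancy of order $1/\sqrt{2^j}$ with sub-exponential slack. Summing the resulting geometric contributions across the $N=\log_2 n$ scales bounds the discrepancy at every dyadic landmark by $C\log n + x$ with probability at least $1-Ke^{-\lambda x}$.

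The main obstacle is exactly the Edgeworth/density comparison at every scale, which is where the exponential-moment hypothesis is crucially used: one needs the density of $S_m/\sqrt{m}$ to differ from the standard Gaussian density by $O(m^{-1/2})$ uniformly on windows containing deviations of size $\sqrt{m}\cdot\mathrm{polylog}(n)$, and this refined local limit theorem requires exponential moments rather than merely polynomial ones. Once the coupling at dyadic landmarks is in place, extension to arbitrary $k\le n$ is routine: it uses Lévy's modulus of continuity for $B$ together with an exponential maximal inequality for partial sums of $\epsilon_i$, both of which produce fluctuations well below the $C\log n$ budget and thus can be absorbed into the constants $C,K,\lambda$.
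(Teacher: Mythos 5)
You have read the situation correctly: the paper offers no proof of this theorem. It is imported verbatim as an external input, attributed to Koml\'os--Major--Tusn\'ady~\cite{KoMaTu75} with Chatterjee~\cite{Chatterjee2012} cited for a modern treatment, and is used only to justify the ``tightness'' discussion of Section~\ref{sec:tight}. Your sketch of the dyadic quantile coupling with Edgeworth-controlled mid-point refinements is a faithful summary of Chatterjee's argument, so there is no divergence of method to report --- there is simply nothing in the paper to diverge from.

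Your flag about the direction of the inequality is correct and worth recording: as printed, the event inside the probability in Equation~(\ref{eq:strBn}) should read $\max_{k\leq n}|S_k-B_k| > C\log n + x$ (or, equivalently, the bound on the displayed event should be $\geq 1-Ke^{-\lambda x}$). The literal statement is indeed vacuous, since its left-hand side tends to $1$ as $x\to\infty$ while the right-hand side tends to $0$; the subsequent use in Equation~(\ref{eq:normbrown}) inherits the same typo. This is an error in the paper's transcription of the KMT theorem, not in your reading of it.
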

Let us consider $\theta\in]0,+\infty[$ and the i.i.d variables $Y_i=X_i/\sigma$ where $X_1$ has probability
distribution $P(u)=p_d u^d + p_{d-1}u^{d-1}+\dots+p_{-c}u^{-c}$, with\footnote{As mentioned
 in Banderier-Nicodeme~\cite{BaNi2010} it is possible to move
 the expectation of a discrete variable to $0$ by the method of {\sl
   shifting  the mean}. See Szpankowski's book~\cite{Szpankowski01}.}
$P(u)$ a positive Laurent polynomial, $P'(1)=0$ and
$\sigma=\sqrt{P''(1)}$. This implies that
\begin{align*} 
    Z=\Ex(\exp(\theta|Y_1|))<\exp(\theta(\max_{-c\leq i \leq
      d}|p_i|\max(c,d)/\sigma))<\infty.
\end{align*}
The conditions of Theorem~\ref{theo:strong} are verified.
Considering the normalization factor $\sigma\sqrt{n}$ of the height of
bridges of length $n$,
we can write Equation (\ref{eq:stringrayleigh}) as
\begin{equation}
\label{eq:normrayleigh}
\lim_{n\rightarrow \infty}\Pr\left(S_n=0,\max _{0\leq k\leq n}\frac{S_k}{\sigma\sqrt{n}}>x\right)=e^{-2x^2}\times\left(1+O\left(\frac{1}{\sqrt{n}}\right)\right)
                                        \qquad x\in]0,+\infty[,
\end{equation} 
while normalisation of Equation (\ref{eq:strBn}) gives
\begin{equation}
\label{eq:normbrown}
\lim_{n\rightarrow \infty}\Pr\left(\max_{k\leq n}\left|\frac{S_k}{\sqrt{n}}-\frac{B_k}{\sqrt{n}}\right|
       \leq C\frac{\log n}{\sqrt{n}} +\frac{x}{\sqrt{n}}\right)\leq
 Ke^{-\lambda x}.
\end{equation}
The error
term of the distance of {\it any} point $S_i$ to the standard Brownian
limit on $[0,1]$
is 
$O(\log n/\sqrt{n})$.
We obtain for the record point $B_{i(h)}$
where $i(h)$ is the first time at which the discrete walk reaches height
$h$ an error term $O(1/\sqrt{n})$. We observe that the distribution of the height of a
standard Brownian on $[0,1]$ is $e^{-2x^2}$. The point $B_{i(h)}$ is {\em tight}
to the Brownian limit.

We propose the following conjecture that should be  refined and
possibly extended.
\begin{conjecture}
\label{conj:tight}
The highest (resp. lowest) points of long enough positive
(resp. negative) arches of discrete bridges are tight to the Brownian
limit.
\end{conjecture}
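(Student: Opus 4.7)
The plan is to extend the argument of Section~\ref{sec:tight} from the full bridge down to each sufficiently long positive arch. First, I would formalize the notion of a positive arch: given a bridge $(S_0, S_1, \dots, S_n)$ with $S_0 = S_n = 0$, let $0 = \tau_0 < \tau_1 < \dots < \tau_m = n$ be the consecutive returns to zero, and call the piece $(S_{\tau_i}, \dots, S_{\tau_{i+1}})$ a \emph{positive arch} if $S_j > 0$ for $\tau_i < j < \tau_{i+1}$. A positive arch of length $\ell = \tau_{i+1} - \tau_i$ is then a discrete excursion, and I would call it \emph{long} if $\ell \geq \varepsilon n$ for some fixed $\varepsilon > 0$; the negative case is symmetric.

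Next, I would establish a Rayleigh-type (in fact, theta-type) limit law for the supremum of a long arch, in analogy with Theorem~\ref{theo:rayleighnoper}. The kernel-method arguments of the aperiodic case transfer with minor modifications: meanders/excursions correspond to extracting coefficients $[u^0]$ of generating functions that further enforce positivity, and the resulting series again admits a singular expansion in $\sqrt{1-z/\rho}$ driven by the same dominant small root $u_1(z)$ and large root $v_1(z)$. The semi-large powers computation of Section~\ref{sec:semilarge} then yields the limit distribution of $\max_{0 \leq k \leq \ell} S_{\tau_i + k}/(\sigma\sqrt{\ell})$, which matches the maximum of a standard Brownian excursion on $[0,1]$ by Donsker-type functional convergence.

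Then I would apply Theorem~\ref{theo:strong} on each long arch. By the strong Markov property of the underlying i.i.d.\ sequence and absolute continuity between the law of an arch and the law of a free walk on compact subintervals bounded away from the arch endpoints, the KMT coupling extends: on each long arch of length $\ell = \Theta(n)$, the discrete walk satisfies $\max_{k \leq \ell}|S_{\tau_i + k} - B_{\tau_i + k}| = O(\log n)$ with exponentially small failure probability. The highest point $S_{i(h^\star_i)}$ of the arch is then identified as the \emph{first} hitting time of the (random) maximal height $h^\star_i = \Theta(\sqrt{n})$; applying the hitting-time argument that follows Equation~(\ref{eq:normbrown}) to the arch upgrades the coupling from $O(\log n / \sqrt{n})$ to $O(1/\sqrt{n})$ at this one distinguished index, which is exactly the statement of tightness to the Brownian limit.

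The main obstacle will be controlling the conditioning. Strong embedding provides exponential tail bounds for unconstrained random walks, but a positive arch is conditioned on strict positivity inside and on returning to zero at the endpoints, an event of probability $\Theta(1/\ell^{3/2})$ for an excursion which could inflate error probabilities when naïvely divided out. The cleanest route is probably a cyclic-shift (Vervaat / Cycle Lemma) argument, expressing a uniformly chosen excursion as a deterministic functional of a bridge, so that the already-established tightness for bridges transfers to the arch without re-running the coupling. A second, more delicate issue is that the number and lengths of arches are themselves random and must be jointly controlled with their maxima; I would handle this by conditioning on the composition $(\tau_{i+1} - \tau_i)_i$ and applying the arch-level result uniformly in $i$ over the (deterministically bounded) number of long arches.
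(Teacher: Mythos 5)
The statement you are addressing is stated in the paper as Conjecture~\ref{conj:tight} and is left \emph{open} there: the paper offers no proof, only the heuristic of Section~\ref{sec:tight} comparing the Rayleigh limit law of Theorem~\ref{theo:stringrayleigh} with the law of the Brownian maximum. Your plan inherits exactly the gap that makes the statement a conjecture rather than a theorem. The decisive step in your third paragraph --- ``applying the hitting-time argument that follows Equation~(\ref{eq:normbrown}) to the arch upgrades the coupling from $O(\log n/\sqrt{n})$ to $O(1/\sqrt{n})$ at this one distinguished index'' --- is not a proof. The agreement of the one-dimensional limit laws $e^{-2x^2}$ for $\max_k S_k/(\sigma\sqrt{n})$ and for the Brownian maximum, even with error $O(1/\sqrt{n})$, is a statement about marginal distributions; it does not imply that under the KMT coupling the two paths are within $O(1)$ (unnormalized) of each other at the random argmax. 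The KMT rate $O(\log n)$ for $\max_k|S_k-B_k|$ is known to be optimal in the sup-norm for non-Gaussian increments, so any improvement localized at the record time requires a genuinely new argument (e.g., a local limit comparison of the joint law of the argmax and the max, or a second-moment/exchangeability argument at the record), and producing that argument is precisely the content of the conjecture.

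Two further points would need real work even granting the above. First, the Vervaat/cycle-lemma reduction transfers \emph{distributional} statements from bridges to excursions, but it does not transport a \emph{coupling}: the cyclic shift is a random re-indexing, and the Brownian motion coupled to the unshifted walk is not the Brownian excursion naturally associated with the shifted one, so ``the already-established tightness for bridges transfers to the arch'' is not automatic. Second, your arch-level limit law should be the theta-type law for the maximum of an excursion (not Rayleigh), and the maximum of a long arch of length $\varepsilon n$ is $\Theta(\sqrt{\varepsilon n})$, so uniformity in $\varepsilon$ and in the random arch decomposition must be quantified, not just ``conditioned on.'' In short: your outline is a reasonable research program and correctly identifies the obstacles, but it does not close the conjecture, and its central step assumes the conclusion.
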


\section{Periodic case}
A periodic~\cite{BanderierFlajolet2002} walk of period $p$ and characteristic
polynomial $P(u)$ verifies
\begin{equation}
\label{eq:dPu}
\Pi(u)=u^c P(u)=H(u^p),\qquad \text{with $H(u)$ a polynomial}.
\end{equation}


The fundamental period $p$ is the greatest common divisor of the sequence of
powers of $u$ in the polynomial $\Pi(u)$. If $p=1$ the walk is
aperiodic, elsewhere we have $c+d=kp$ with $k\in\bN$.
\begin{example}
\label{ex:nored}
Let
 $P(u)=u^9+u^3+\dfrac{1}{u^{3}}$, which gives
$\Pi(u)=u^{12}+u^6+1$,
with periods $\{2,3,6\}$ and fundamental period $6$, while $H(v)=v^2+v+1$.
\end{example}
\begin{remark}
\label{rem:pversusc}
Let us consider any walk of fundamental period $p$ and larger negative
(resp. positive)
jump $-c$ (resp. $d$).

\ \\[-2ex]\noindent
Such a walk must verify $p \perp c$ and $p\perp d$  ($\operatorname{gcd}(p,cd)=1$).
\ \\[1ex]\noindent
If not, we have $c=ac'$ and $p=ap'$ with $a\geq 2$.
This implies that
\[u^cP(u)=H(u^p)=u^{ac'}P(u)=H(u^{ap'})\Longrightarrow
     P(u)=H(u^{ap'})/u^{ac'}=Q(u^a),\]
 with $Q(y)$ a Laurent polynomial,
and therefore $P(u)$ is not reduced.

The same argument applies to the dual walk $\widetilde{P}(u)=P(1/u)=\dots+p_pu^{-d}$
when considering $u^d\widetilde{P}(u)$.

 Example~\ref{ex:nored} provides
such a non reduced walk.
\end{remark}
\subsection{Singularities of a periodic walk}
\label{sec:persing}
The polynomial $P(u)$ has minimal period $p$, and we can obtain the values of
the $k$th derivatives of $P(u)$ evaluated at $u=\kappa_{\ell}\tau$ for
$\kappa_{\ell}=e^{2i\pi\ell/p}$ by differentiation of 
$\Pi(u)=H(u^p)=u^cP(u)$ with respect to their values at $u=\tau$.
We also have ${\displaystyle \Pi_k(u)=
  u^k\frac{d^k\Pi(u)}{du^k}=H_k(u^p)}$,
 this gives
\begin{align}
\label{eq:PPu}
  \Pi(\kappa_{\ell}\tau)&=\Pi(\tau)=(\kappa_{\ell}\tau)^{c}P(\kappa_{\ell}\tau)=\tau^cP(\tau) \quad\Rightarrow\quad
  P(\kappa_{\ell}\tau)=\frac{P(\tau)}{\kappa_{\ell}^c}\\
\label{eq:Pprimrk}
  \Pi_1(u)&=u\Pi'(u)=\Pi_1(\kappa_{\ell}u)
        \left\{\begin{array}{l}=c\Pi(u)+u^{c+1}P'(u)\\
             =c\Pi(\kappa_{\ell}u)+(\kappa_{\ell}u)^{c+1}P'(\kappa_{\ell}u)
              \end{array}\right.\Rightarrow
  P'(\kappa_{\ell}\tau)=0
  \\
\nonumber u^2\frac{d^2\Pi(u)}{du^2}&\left\{\begin{array}{l}
      =u^{c+2}P''(u)+2cu^{c-1}P'(u)+c(c-1)u^cP(u)\\
      =(\kappa_{\ell}u)^{c+2}P''(\kappa_{\ell}u)+2c(\kappa_{\ell}u)^{c-1}P'(\kappa_{\ell}u)+c(c-1)(\kappa_{\ell}u)^cP(\kappa_{\ell}u) \end{array}\right.\\
\label{eq:P2k}
&\qquad\qquad\qquad\qquad\quad\Rightarrow\quad P''(\kappa_{\ell}\tau)=\frac{P''(\tau)}{\kappa_{\ell}^{c+2}}.
\end{align}
Assuming now that
\[\Pi_k(u) :=u^k\frac{d^k\Pi(u)}{du^k}=\sum_{0\leq j\leq
  k}\alpha_{k,j}u^{c+k-j}\frac{d^jP(u)}{du^j},\quad\text{and } \left.\frac{d^{k}P(u)}{du^k}\right|_{u=\kappa_{\ell}}=\frac{1}{\kappa_{\ell}^{c+k}}\left.\frac{d^{k}P(u)}{du^k}\right|_{u=\tau},\]
by differentiation of $\Pi_k(u)$, we obtain $\Pi_{k+1}(u)$ that verifies
\[u^{k+1}\Pi_{k+1}(u)=\sum_{0\leq j \leq k+1}\alpha_{k+1,j}u^{c+k+1-j}\frac{d^jP(u)}{du^j}.\]
We make one more times use of the periodicity of the walk.
We have $u^{k+1}\Pi^{(k+1)}(u)=H_{k+1}(u^p)$ and therefore
\begin{equation}
\label{eq:Pkappak}
\left\{\begin{array}{l}
            H_{k+1}(u)=0\ \text{if } k+1>d+c\\
            H_{k+1}((\kappa_{\ell}\tau)^p)=H_{k+1}(\tau^p)\\
            P'(\kappa_{\ell})=P'(\tau)=0,\\
            \end{array}\right. \Longrightarrow
   \left.\frac{d^{k+1}P(u)}{du^{k+1}}\right|_{u=\kappa_{\ell}\tau}=\frac{1}{\kappa_{\ell}^{c+k+1}}\left.\frac{d^{k+1}P(u)}{du^{k+1}}\right|_{u=\tau},
\end{equation}
which leads to the lemma
\begin{lemma}
\label{lem:dPk}
With 
$\kappa_{\ell}=e^{2i\pi\ell/p}$ and $P'(\tau)=0$  we have
\[P(\kappa\tau)=\frac{P(\tau)}{\kappa^c}, 
\quad
P'(\kappa\tau)=0,\quad\left.\frac{d^{k}P(u)}{du^{k}}\right|_{u=\kappa_{\ell}\tau}=\frac{1}{\kappa_{\ell}^{c+k}}\left.\frac{d^{k}P(u)}{du^{k}}\right|_{u=\tau}
\ (k\geq 2).\] 
\end{lemma}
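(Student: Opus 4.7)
The plan is to proceed by induction on $k$. The cases $k=0,1,2$ are already handled by equations~(\ref{eq:PPu}), (\ref{eq:Pprimrk}), and~(\ref{eq:P2k}) preceding the statement, so only the inductive step remains. The inductive hypothesis is that for all $j$ with $0\le j\le k$ one has
\[
\left.\frac{d^{j}P(u)}{du^{j}}\right|_{u=\kappa_{\ell}\tau}
 \;=\;\frac{1}{\kappa_{\ell}^{\,c+j}}\left.\frac{d^{j}P(u)}{du^{j}}\right|_{u=\tau},
\]
with the convention that the $j=1$ case is just $P'(\kappa_\ell\tau)=0=P'(\tau)$.

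The key algebraic device is the operator $\Pi_k(u)=u^k\,d^kP_c/du^k$ built from $\Pi(u)=u^cP(u)$, which admits two useful descriptions. First, Leibniz on $\Pi=u^cP$ expands $\Pi_k$ as a linear combination
\[
\Pi_k(u)\;=\;\sum_{j=0}^{k}\alpha_{k,j}\,u^{\,c+k-j}\,\frac{d^{j}P(u)}{du^{j}},
\qquad \alpha_{k,k}=1,
\]
exactly as written in the paragraph preceding the lemma. Second, because $\Pi(u)=H(u^p)$, the operator $u^k\,d^k/du^k$ applied to a polynomial in $u^p$ returns a polynomial in $u^p$ (this follows because $u^k\,d^k/du^k$ is a $\bZ$-linear combination, with Stirling-number coefficients, of the Euler operators $(u\,d/du)^j$, and $u\,d/du$ stabilises functions of $u^p$). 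Hence $\Pi_{k+1}(u)=H_{k+1}(u^p)$, and in particular
\[
\Pi_{k+1}(\kappa_{\ell}\tau)\;=\;\Pi_{k+1}(\tau)
\]
for every $p$-th root of unity $\kappa_{\ell}$.

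Now equate the two expressions for $\Pi_{k+1}$ at $u=\kappa_\ell\tau$ and at $u=\tau$. Plugging the inductive hypothesis into every $j\le k$ summand on the left and cancelling $\kappa_\ell^{\,c+k+1-j}$ against $\kappa_\ell^{-(c+j)}$ (which produces a plain $\tau^{\,c+k+1-j}$ factor identical to the corresponding term on the right), all summands with $j\le k$ match termwise. What remains is the top term $j=k+1$, giving
\[
(\kappa_{\ell}\tau)^{\,c+k+1}\!\left.\frac{d^{\,k+1}P}{du^{\,k+1}}\right|_{u=\kappa_{\ell}\tau}
\;=\;\tau^{\,c+k+1}\!\left.\frac{d^{\,k+1}P}{du^{\,k+1}}\right|_{u=\tau},
\]
from which the desired relation for $P^{(k+1)}$ follows after dividing by $(\kappa_{\ell}\tau)^{\,c+k+1}$. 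This closes the induction.

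I do not see a genuine obstacle in this argument; the only point that deserves care is the claim that $u^k\Pi^{(k)}(u)$ is a polynomial in $u^p$, since the operator $u^k\,d^k/du^k$ is not obviously compatible with the substitution $u\mapsto u^p$. That is why I would prove it separately, either via the Stirling-number identity reducing $u^k\,d^k/du^k$ to iterates of the Euler operator $u\,d/du$, or by a direct induction writing $u^k H(u^p)^{(k)}$ as a combination of terms $u^{p\ell}H^{(\ell)}(u^p)$. Once that structural fact is in hand, the identification of coefficients and the induction itself are routine.
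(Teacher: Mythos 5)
Your argument is essentially the paper's own proof: the same induction on $k$ via the operator $\Pi_k(u)=u^k\,d^k\Pi(u)/du^k$, its Leibniz expansion in the derivatives $P^{(j)}$, and the invariance $\Pi_k(\kappa_\ell\tau)=\Pi_k(\tau)$ coming from $\Pi_k(u)=H_k(u^p)$ --- a structural fact the paper asserts without the monomial-by-monomial (or Euler-operator) justification you rightly supply. One bookkeeping correction: the Leibniz expansion is $\Pi_k(u)=\sum_{j=0}^{k}\alpha_{k,j}\,u^{c+j}P^{(j)}(u)$ rather than $u^{c+k-j}$ as in the paper's display that you copied; with the exponent $c+j$ the cancellation against $\kappa_\ell^{-(c+j)}$ from the inductive hypothesis is exact (whereas $\kappa_\ell^{c+k+1-j}\kappa_\ell^{-(c+j)}=\kappa_\ell^{k+1-2j}\neq 1$ in general), and the surviving top term is indeed $(\kappa_\ell\tau)^{c+k+1}P^{(k+1)}(\kappa_\ell\tau)$, consistent with your final display.
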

Section~\ref{sec:asing} states by domination that in the aperiodic case
there are no singularities in the open disk $|z|<\rho$;
the same proof applies in the
 periodic case.

We want now to check that $P'(u)$ has on the
circle $|u|=\tau$ no other root $\chi_y\tau=\tau e^{2i\pi y}$ than one of the roots $\kappa_{\ell}\tau$
  for $\ell\in [0..p-1]$.

By the triangle inequality, we have $|P(\chi_y)|= P(\tau)$ only if the
arguments $\alpha_j=2\pi jy$  of the monomials $p_j \tau^j e^{2i\pi jy}$ of $P(u)$ are equal, where
$P(u)=p_d u^d +\dots + p_ju^j +\dots+ p_{-c} u^{-c}$ and $-c \leq j \leq d$.
This implies that $P(\chi_y)=e^{2i\pi m\alpha}P(\tau)$ for some $m\in\bN$
and $\alpha\in\bQ$.

We know from Banderier-Flajolet\cite{BanderierFlajolet2002} proof of the domination of
the kernel roots (see the caption of Figure~\ref{fig:ROOTS}) that
$P(\orr\tau)$ 
is decreasing for $\orr<1$ and increasing for $\orr >1$ and therefore $P'(\orr\tau)=0$
only for $\orr=1$.
As a consequence,
\[\left.\frac{d}{d \orr}P(\orr\tau)\right|_{\orr=1}=0\quad
\Longrightarrow \quad P'(\chi_y)=0\quad \text{(since $P(u)$ is
  analytic)}.\]

We obtain $P'(\kappa_{\ell}\tau)=0$ in Equation (\ref{eq:Pprimrk}) with
the lone assumption that $P'(\tau)=0$. Since $P'(\chi_y\tau)=0$, replacing $\tau$ by
$\tau\chi_y$ in this equation gives
$P'(\kappa\chi_y\tau)=0$.

Similarly to the proof of the Daffodil Lemma
(see~\cite{FlajoletSedgewick2009} Lemma IV.1), if $y$ is irrational 
the sequence $(2\pi(y+j/p) \bmod 2\pi)$ is infinite, 
and therefore  the polynomial $P(u)$ has an infinite number of zeroes, which is
 impossible.

Let us assume now that 
 $y=g+\dfrac{x}{p}$ with $g\geq 0$
integer and $x=\gamma/\delta<1\in\bQ$.

We have with regards to the period $p$
\[ \chi_y^cP(\chi_y)=\tau^c e^{2i\pi c(g+x/p)})P(\tau
e^{2i\pi(g+x/p)})=H(\tau^pe^{2i\pi x})=\tau^c e^{2i\pi cx/p}P(\tau
e^{2i\pi x/p}),
\]
and there is a root $\chi_x$ of $P(u)$  corresponding to the case $g=0$, 
which belongs to the arc $z=\tau e^{2i\pi t/p}$ with $0<t<1$.

Let $\chi_{jx}=\tau e^{2i\pi jx \bmod 2\pi}$. 
There exists an integer $k$ 
and $m=kx\leq \delta$ such that $P(\chi_m)=\tau$.
Let $K=\{\chi_{jm}; j=0\,..\,\delta\!-\!1\}$. 
The set
$K\setminus \tau$ 
has an element $\chi_1$ of smallest argument $2\pi/q$ with $q>p$ and
$q=|K|$,
and therefore $K=\{\tau e^{2i\pi j/q}; j=0\,..\,q\!-\!1\}$.

We have
$|\tau^c\chi_1^cP(\chi_1\tau)|=\tau^cP(\tau)$ and therefore 
$\tau^c\chi_1^cP(\chi)=R(\tau^q\chi^q)$ with $R(u)$ a polynomial. This implies
that $q$ is a period of $P(u)$; since $q>p$, it contradicts  
the hypothesis that $p$ is the fundamental and therefore largest
period of $P(u)$.

We obtain the following lemma.
\begin{lemma}
\label{lem:Pchi}
If the polynomial $P(u)$ has fundamental period $p$, the function $|1/P(u)|$ attains its maximum
$1/P(\tau)$ on the circle $|u|=\tau$ at the
points $\kappa_{\ell}\tau$ where $\kappa_{\ell}=e^{2i\pi\ell/p}$ and
only there. These points verify $P'(\kappa_{\ell}\tau)=0$ and, by
Assumption~\ref{as:norep}, $P''(\kappa_{\ell}\tau)\neq 0$; they are saddle-points.
\begin{equation}
\label{eq:Pchi}
\left|\frac{1}{P(u)}\right| <\frac{1}{P(\tau)}, \qquad\text{for }
u=\tau e^{2i\pi t/p} \text{ and } t\not\in \bZ.
\end{equation}
\end{lemma}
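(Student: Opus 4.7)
The plan is to characterise the set of points $u$ on the circle $|u|=\tau$ at which $|P(u)|$ meets the extremum furnished by the triangle inequality, and then to identify that set with $\{\kappa_\ell\tau : \ell=0,\dots,p-1\}$. Writing $u=\tau e^{2\pi i y}$, I use that every $p_j$ is a strictly positive rational to get
\begin{equation*}
|P(u)| \;=\; \Bigl|\sum_{j} p_j\,\tau^{j}\,e^{2\pi i j y}\Bigr| \;\le\; \sum_{j} p_j\,\tau^{j} \;=\; P(\tau),
\end{equation*}
with equality iff the unit complex numbers $\{e^{2\pi i j y}\}_{j\,:\,p_j\ne 0}$ are all equal, equivalently iff $(j-j')\,y\in\bZ$ for every pair of exponents $j,j'$ with nonzero coefficient. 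Setting $\delta:=\gcd\{j-j' : p_j,p_{j'}\ne 0\}$, this pins down $y$ to lie in $(1/\delta)\bZ$, and for $y\notin(1/\delta)\bZ$ the inequality is strict, which is precisely the strict comparison between $|1/P(u)|$ and $1/P(\tau)$ recorded in~(\ref{eq:Pchi}).

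The second step is to show $\delta=p$. From Definition~\ref{def:period} combined with $\Pi(u)=u^{c}P(u)=H(u^{p})$, the polynomial $P$ admits period $q$ iff every monomial exponent of $P$ with nonzero coefficient lies in a single residue class modulo $q$, which is equivalent to $q\mid\delta$. The largest $q$ for which such a decomposition exists---the fundamental period---is therefore exactly $\delta$, so $\delta=p$. Substituting back gives $y=\ell/p$ for some integer $\ell\in\{0,\dots,p-1\}$, and thus the equality set on the circle is precisely $\{\kappa_\ell\tau\}_{\ell=0}^{p-1}$; outside this finite set the strict inequality~(\ref{eq:Pchi}) holds.

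For the converse (each of these $p$ points really does belong to the equality set) and for the saddle-point claim, I rely on the already-established Lemma~\ref{lem:dPk}: it gives $P(\kappa_\ell\tau)=P(\tau)/\kappa_\ell^{c}$, whence $|P(\kappa_\ell\tau)|=P(\tau)$; it gives $P'(\kappa_\ell\tau)=0$; and combined with Assumption~\ref{as:norep} it gives $P''(\kappa_\ell\tau)=P''(\tau)/\kappa_\ell^{c+2}\ne 0$, so each such point is a non-degenerate critical point of $P$ in the complex sense.

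I expect the main obstacle to be the bookkeeping identification $\delta=p$: the triangle-inequality step is elementary and the saddle-point statement is a direct consequence of Lemma~\ref{lem:dPk}, but translating cleanly between ``fundamental period of $P$'' and ``gcd of differences of exponents of $P$'' requires careful use of Definition~\ref{def:period}, including the shift by $b=-c$ that passes from $P$ to $\Pi=u^{c}P$ so that the exponents sit on a pure arithmetic progression of step $p$. Once this identification is made, the rest of the proof reduces to the equality case of the triangle inequality together with a direct appeal to Lemma~\ref{lem:dPk}.
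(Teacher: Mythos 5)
Your proof is correct, but it takes a genuinely different and more direct route than the paper. The paper also starts from the triangle inequality, but then proceeds indirectly: it first argues that any maximizer $\chi_y\tau$ must satisfy $P'(\chi_y\tau)=0$ (via the radial derivative of $P(\orr\tau)$ at $\orr=1$), rules out irrational $y$ by a Daffodil-Lemma-style argument (infinitely many zeros of a polynomial), and then, for rational $y=g+x/p$, builds a cyclic set $K$ of $q>p$ equally spaced maximizers and derives a contradiction with the maximality of the fundamental period $p$. You instead solve the equality case of the triangle inequality in closed form: equality holds iff $(j-j')y\in\bZ$ for all exponents in the support, i.e.\ iff $y\in(1/\delta)\bZ$ with $\delta=\gcd\{j-j'\}$, and then identify $\delta=p$ from Definition~\ref{def:period}. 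This collapses the rational/irrational case split into a single gcd computation and is both shorter and cleaner; the identification $\delta=p$ is sound because $-c$ lies in the support of $P$, so the gcd of the exponents of $\Pi=u^cP$ equals the gcd of the exponent differences of $P$. Your recovery of the converse inclusion and of the saddle-point claims from Lemma~\ref{lem:dPk} (with $P''(\tau)\neq 0$ guaranteed by Assumption~\ref{as:norep} applied at $\tau$, where $P'(\tau)=0$) matches the paper.

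One caveat you should not paper over: what you actually prove is $|P(u)|<P(\tau)$ strictly for $u=\tau e^{2i\pi t/p}$, $t\notin\bZ$, hence $|1/P(u)|>1/P(\tau)$ there. This is the \emph{reverse} of the inequality literally printed in (\ref{eq:Pchi}), and it shows that $1/P(\tau)$ is the \emph{minimum}, not the maximum, of $|1/P(u)|$ on the circle. The direction stated in the lemma is incompatible with the triangle inequality, so the printed statement has min and max interchanged; the version you prove (equivalently, $|P(u)|$ attains its maximum $P(\tau)$ exactly at the $\kappa_\ell\tau$) is the one actually used in the saddle-point analysis of Section~\ref{sec:unboundper}. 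Rather than asserting that your conclusion ``is precisely'' (\ref{eq:Pchi}), state explicitly that you establish the corrected inequality.
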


\subsection{Unbounded periodic bridges}
\label{sec:unboundper}
Following Banderier-Flajolet~\cite{BanderierFlajolet2002}, Section~\ref{sec:unbaperiodic}
enumerates aperiodic unbounded bridges by the saddle point estimate.
As specified by Lemma~\ref{lem:Pchi} we have $p$ saddle points
  $\kappa_{\ell}\tau=\tau\exp(2i\pi\ell/p)$ on the
circle
$|u|=\tau$ and as previously mentioned, there are no critical points $\zeta=1/P(\upsilon)$
such that $|\upsilon|<\tau$ and $P'(\upsilon)=0$.

Let $\mathcal{S}_{\ell}$ be the saddle-point integral giving the
contribution of $[u^0]P(u)^n$ in a suitable small  neighborhood 
$\mathcal{V}_{\ell}=\{z=\kappa_{\ell}\tau e^{is}, s\in [-\nu..\nu]\}$ of the point $\kappa_{\ell}\tau$.
We refer to Flajolet-Sedgewick 
and Greene-Knuth books~\cite{FlajoletSedgewick2009,GreKnu81} for
detailed proofs.

Using Lemma~\ref{lem:dPk} we obtain with $\mathcal{S}_0=V_n$ of 
Equation (\ref{eq:unboun0})
\begin{equation}
\label{eq:Sell}
\mathcal{S}_{\ell}\sim\frac{1}{2\pi}\int_{-\nu}^{\nu}
P^n(\kappa_{\ell}\tau e^{is})ds
  \sim\frac{1}{\rho^n\kappa_{\ell}^{cn}}\times\left(
\frac{1}{2\sigma\tau\pi\sqrt{n}}+\dots\right) = \kappa_{\ell}^{-cn}\mathcal{S}_0 .
\end{equation}
 
The integers $p$ and $c$ are relative
primes by Remark~\ref{rem:pversusc}. Since $\kappa_{\ell}=\kappa^{\ell}$ with $\kappa=e^{2\pi i/p}$,
we have the set equation
\begin{equation}
\mathcal{C}_{\ell}=\{c\ell \bmod
p,\ \ell\in[0,1,..,p-1]\}=\{\ell,\ \ell\in[0,1,..,p-1]\}.
\end{equation}
If $n \bmod p=b\neq 0$,
 we have also $-cn \bmod p =-cb \bmod p=j'\neq 0$ with $j'<p$. Therefore
\[\kappa_{\ell}^{-cn}=\kappa^{j'\ell}\qquad \text{with } j'<p,
\ell<p.\]
\begin{itemize}
\item
If $j'$ divides $p$ we have $p=aj', \ a<p$ and $j'\ell=\ell\times p/a$.
\[\mathcal{C}'_{\ell}= \{j'\ell \bmod p, \ell\in[0,1,..,p-1]\} = \{\ell', \ell'\in
   [0,p/a,2p/a,..,p-1]\}\quad \text{and}\quad
   |\mathcal{C}'_{\ell}|=a.\]
While $\ell$ goes through the sequence $(0,1,..,p-1)$, the
integer $j'\ell \bmod p$ repeats $a$ times the sequence 
$(0,p/a,2p/a,..,p-1)$ and  the sum of terms
$\kappa^{\ell p/a}$ along this last sequence is $0$.
\item 
else
\[ \{j'\ell \bmod p, \ell\in[0,1,..,p-1]\} = \{\ell', \ell'\in [0,1,..p-1]\}.\]
\end{itemize}
In both cases, if $n \bmod p \neq 0$ we obtain as expected
${\displaystyle \sum_{0\leq\ell<p}\mathcal{S}_{\ell}=0}$; 
when $n$ is not a multiple of $p$ there are no bridges of
length $n=mp+b$ with $0<b<p$.\\[2ex]\noindent
On the contrary, when  $n=mp$, since $\kappa_{\ell}^{-cmp}=1$,  we obtain
${\displaystyle b_n^{<\infty}=\sum_{\ell=0}^{p-1}\mathcal{S}_{\ell}=p\times\mathcal{S}_0,}$
where $\mathcal{S}_0$ is defined as before.

\begin{figure}[t!]
\begin{center}
\begin{tikzpicture}[scale=1.5]
\foreach \z in {0,1,2,3,4} {
\begin{scope}[rotate around={(\z)*-72:(0,0)}]
\path(4,0) coordinate (X2);
\path(0,-1.5) coordinate (Y1);
\path(0,1.5) coordinate (Y2);
\path(-2,0) coordinate (X1);
\path(-1,1.8) coordinate (R);
\path(0,0) coordinate (origin);
\path(1,0) coordinate (P1);
\path(0.7071,0.7071) coordinate (P2);
\path(0.97,0.1456) coordinate (rplus);
\path(0.97,-0.1456) coordinate (rminus);
\path(3.0,0.1456) coordinate (Rplus);
\path(2.0,-1.3) coordinate (Ccontour);
\path(2.0,0.2426) coordinate (Rplusmean);
\path(3.0,-0.1456) coordinate (Rminus);
\path(2.0,-0.1456) coordinate (Rminusmean);
\path(3.5,-0.1) coordinate (tauminus);
\path(3.5,0.1) coordinate (tauplus);

\path(-0.15,-0.30) coordinate (OO);
\path(-1.3,-1.3) coordinate(GAMMA);
\path(2,2.2) coordinate (rtozero);
\path(-1.3,2) coordinate (gamma);

\draw[->,green,thick] (rplus) -- (Rplus);
\draw[->,green,thick] (Rminus) -- (rminus);
\draw[->,red,thick] (Rplus) -- (Rminus);
(Rplusmean);
\draw[-] (tauminus) -- (tauplus);

\fill[red] (origin) circle(0.05);
\fill[blue] (rplus) circle(0.05);
\fill[blue] (rminus) circle(0.05);

\fill[blue] (Rplus) circle(0.05);
\fill[blue] (Rminus) circle(0.05);
\node at (1.2,0.40) {$R^+_{\z}$};
\node at (1.1,-0.40) {$R^-_{\z}$};
\node at (3.0,0.40) {$S^+_{\z}$};
\node at (3.0,-0.40) {$S^-_{\z}$};
\draw[->] (origin) -- (X2);
\path(3.5,0.18) coordinate (tautau);
\node at (tautau) {$\rho_{\z}$};
\end{scope};
};
\node at (OO) { $O$};
\node at (4.0,0.15) {$x$};
\node at (-2.0,3)
      {$\widehat{\mathcal{C}_r}=\overrightarrow{\gamma^+_0\gamma^{\perp}_0
\gamma^-_0\Gamma_{r,0}\dots\gamma^-_4\Gamma_{r,4}}$};
\node at (3.0,3.1) {$\kappa_{\ell}=e^{2i\pi\ell/5}$};
\node at (3.0,2.5) {$\rho_{\ell}=\kappa_{\ell}\rho_0$};
\node at (3.5,1.35)
      {$|R^+_{\ell}R^-_{\ell}|=|S^+_{\ell}S^-_{\ell}|=2s,\ \ s=o(r^{3n})$};
\node at (3.3,1.9) {$|OR^+_{\ell}|=|OR^-_{\ell}|=r$};
\node at (4.0,-3.0)
      {$\Gamma_{r,\ell}=\overrightarrow{R^-_{\ell'}R^+_{\ell'+1}}\quad (\ell'=p-\ell
     \bmod p)$};
\node at (3.0,-1.5) {$\gamma^+_{\ell}=\overrightarrow{R^+_{\ell}S^+_{\ell}}$};
\node at (3.0,-2) {$\gamma^-_{\ell}=\overrightarrow{S^-_{\ell}R^-_{\ell}}$};
\node at (3.0,-2.5) {$\gamma^{\perp}_{\ell}=\overrightarrow{S^+_{\ell}S^-_{\ell}}$};
\draw[->] (origin) -- node[right=1pt,below=0.2pt] {$r$} (P2);
\foreach \y in {0,1,2,3,4} \draw
         [red,thick,domain=(\y*72)+8.54:(\y+1)*72-8.54]
         plot ({cos(\x)},{sin(\x)});
\node at (-2.0,-3.0) {$\gamma_{\ell}=\lim_{r\rightarrow 0}
          \overrightarrow{(\kappa_{\ell}r)( \kappa_{\ell}\rho_0})$};
\end{tikzpicture}
\end{center}
\begin{picture}(0,0)(0,0)
\put(0,-40){\rule{\textwidth}{0.2mm}}
\end{picture}
\caption{\label{fig:duchoncontour}Integration contour for the Duchon
  walk $P(u)=u^2+\dfrac{1}{u^3}$ with period $5$.}
\end{figure}

\subsection{Preliminary Cauchy contour for the periodic case}
\label{sec:percontou}
In the periodic case, the preliminary Cauchy contour is star-shape 
and later deformed by the usual method of singularity analysis  to $p$
dominant Hankel contours and negligible secondary ones ; with
$\kappa_{\ell}=e^{2i\ell\pi/p}$, the $\ell$th Hankel 
contour $\gamma_{\ell}$ comes from $\kappa_{\ell}(+\infty)$
winds around the point $\kappa_{\ell}$ and goes back to
$\kappa_{\ell}(+\infty)$.
We will prove that the Hankel integral along the path $\gamma_{\ell}$
is equal to the one along $\gamma_0$.
This will induce a multiplicative factor $p$ occurring in $b_n^{>x\sigma\sqrt{n}}$ and in $b_n^{<\infty}$;
this factor cancels when taking the ratio of the two quantities.

We make $p-1$ successive rotations of angle $2\pi i/p$ of the path
$\cP=\overrightarrow{R^+_0S^+_0S^-_0R^-_0}$,  which generates the paths
$(\cP_1,\dots\cP_{\ell},\dots\cP_{p-1})$, where
\[\cP_{\ell}=\overrightarrow{R_{\ell}^+S_{\ell}^+S_{\ell}^-R_{\ell}^-},\qquad 
      \left\{\begin{array}{l}
        R_{\ell}^+=e^{2\pi i \ell/p}R_0^+,\quad S_{\ell}^+=e^{2\pi i \ell/p}S_0^+,\\
        R_{\ell}^-=e^{2\pi i \ell/p}R_0^-,\quad S_{\ell}^-=e^{2\pi i \ell/p}S_0^-
      \end{array}\right.
\]
and the new contour $\widehat{\cC}_{r}$ is completed by the
$p$ arcs $\Gamma_{r,\ell}=\overrightarrow{R_{\ell}^- R_{\ell+1\!\!\mod p}^+}$ of radius $r$, 
where we note
\[R_0^-=R^-,\ R_0^+=R^+,\ S_0^-=S^-,\ S_0^+=S^+,  \qquad \text{with
}R^+,\ R^-, \ S^+, \ S^- \text{ defined as in Figure~\ref{fig:contour}}.\]
See Figure~\ref{fig:duchoncontour} for the case $p=5$.

\subsection{Dominant singularities properties for the periodic case}
\label{sec:perdom}
With a walk of period $p$ and $x\in ]0,\rho]$, we have as in the
non-periodic case~\footnote{See Figure~\ref{fig:ROOTS}.} 
a number
$\tau$ such that $P(u)$ is decreasing for $x<\tau$ and increasing for
$x>\tau$; this number $\tau$ verifies as in the aperiodic case
$P'(\tau)=0$; let  $\rho_0=\rho=1/P(\tau)$.

Since $\kappa_{\ell}=e^{2i\pi\ell/p}$ and $u^c P(u)=H(u^p)$, we have
for $v \in \bR^+$,
\begin{equation}
\frac{\kappa_{\ell}^c v^c}{z}=\kappa_{\ell}^c v^c
P(\kappa_{\ell}v)=H(\kappa_{\ell}^p v^p)=H(v^p)=v^cP(v),\quad 
\text{ and }
P(\kappa_{\ell}v)=\kappa_{\ell}^{-c}P(v)
\end{equation}
Therefore $\kappa_{\ell}^cP(\kappa_{\ell} v)$ is real for $v\in \bR^+$
and so  is
 the real equation 
\begin{equation}
\label{eq:dreal}
 Z=\kappa_{\ell}^{-c}z=\frac{1}{P(v)},\qquad z=x\kappa_{\ell}^c,
\quad x\in]0,\rho[
\end{equation}
which has as $Z\rightarrow 0^+$ small and large roots $u_{i,\ell}(Z)$ and
$v_{j,\ell}(Z)$. 
We prove next that they verify
 the same properties as the small $u_i(z)$ and large roots 
$v_j(z)$ for
$z\in]0,\rho[$
in the aperiodic case.

The triangle inequality of Equation (\ref{eq:triangle}),
\[ |P(re^{it})|<P(r) \quad \text{ for } 0<r<\rho,\ t\not\equiv 0 \pmod{2\pi}\]
 is
no more verified since $P(\kappa^{\ell}w)=P(w)$ for
$\kappa=e^{2i\pi/p}$, with $\ell$ an integer and $w$ any solution of
$1-zP(w)=0$.

Let $\mathcal{K}_{p,\ell}$ be the cone
\[\mathcal{K}_{p,\ell}= \left\{z=xe^{it},\quad x\in]0,\rho[,
\quad t\in \left[2\pi\frac{\ell}{p},2\pi\frac{\ell+1}{p}\right[\ \ \right\} \]
Within the cone $\mathcal{K}_{p,\ell}$, the triangle identity is valid,
\begin{equation}
\label{eq:pdom}
|P(xe^{it})|<P(x) \quad \text{for }
xe^{it}\in\mathcal{K}_{p,\ell}.
\end{equation}
Within these restricted domains, the proofs of Lemma 1 and 2  of
Banderier-Flajolet~\cite{BanderierFlajolet2002} of aperiodic domination
(Lemma~\ref{lem:domin}) apply 
to the roots of the real
equation (\ref{eq:dreal}), which leads to the lemma.
\begin{lemma}
\label{lem:pdomin}
In the periodic case, with $u_{i,\ell}(Z)$ (resp. $v_{j,\ell}(Z)$)
 the small (resp. large) roots
for $z=x\kappa_{\ell}^c$, with $Z=\kappa_{\ell}^{-c}z$ and $x\in]0,\rho]$,
 along each segment $O\rho_{\ell}$ the roots of the
kernel equation $1-zP(u)=0$ verify
\begin{equation}
|u_{i,\ell}(Z)|< u_{1,\ell}(Z)<v_{1,\ell}(Z)< |v_{j,\ell}(Z)|,\qquad
(i\neq 1,\ j\neq 1),\qquad Z\in ]0,\rho[.
\end{equation}
They verify the same analytic properties as the roots in the
aperiodic case.

The dominant  small (resp. large) root $u_{1,\ell}(Z)$ (resp. $v_{1,\ell}(Z)$) is an  analytic solution which
can be continued from the dominant real  small (resp. large) root at  $0$ in the
direction $O\kappa_{\ell}$. 
\end{lemma}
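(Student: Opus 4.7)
My plan is to reduce the periodic domination to the aperiodic case of Lemma~\ref{lem:domin} via the change of variable $u = \kappa_\ell v$, which rotates the segment $O\rho_\ell$ back to the positive real axis. On that segment I parametrize $z = x\kappa_\ell^c$ with $x \in\, ]0,\rho[$, and the periodicity relation $P(\kappa_\ell v) = \kappa_\ell^{-c}P(v)$ furnished by Lemma~\ref{lem:dPk} turns the kernel equation $1 - zP(u) = 0$ into the real-parameter equation
\[
1 - x P(v) = 0.
\]
Since $v \mapsto \kappa_\ell v$ is a modulus-preserving bijection on the set of kernel roots, proving the claimed domination for the roots in $v$ at $Z = x$ immediately transports, via multiplication by $\kappa_\ell$, to the roots $u_{i,\ell}(Z)$ and $v_{j,\ell}(Z)$ in the statement.

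Next, I would run the Banderier--Flajolet proof of Lemma~\ref{lem:domin}, replacing the aperiodic strict inequality (\ref{eq:triangle}) by the refined observation that, because the coefficients $p_j$ are non-negative, $|P(v)| \leq P(|v|)$ always holds, with equality exactly when the monomials $p_j v^j$ (for $p_j \neq 0$) share a common argument; the fact that $p$ is the g.c.d.\ of the exponents appearing in $P$ then forces $v = \kappa^k |v|$ for some integer $k$. Fix a root $v$ of $1 - xP(v) = 0$ with $|v| \leq \tau$ and $v \neq u_1(x)$, where $u_1(x)$ is the positive real dominant small branch. Either $|P(v)| < P(|v|)$ strictly, in which case the chain $x = 1/|P(v)| > 1/P(|v|)$ together with monotonicity of $1/P$ on $[0,\tau]$ yields $|v| < u_1(x)$, reproducing the aperiodic argument verbatim; or else $v = \kappa^k|v|$ and then $P(v) = \kappa^{-ck} P(|v|)$ must be the real positive number $1/x$, which by the coprimality $\gcd(c,p) = 1$ (Remark~\ref{rem:pversusc}) forces $k \equiv 0 \pmod p$, so $v = u_1(x)$ contradicting the hypothesis. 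The corresponding statement for the large roots follows by the duality $\widetilde{P}(u) = P(1/u)$, for which $\gcd(d,p) = 1$ by the same remark.

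The analytic continuation assertion is then a corollary. Property~\ref{prop:domin}, whose derivation relied only on Pringsheim-type dominance arguments that transfer unchanged to the periodic case, ensures that the real positive roots $u_1(x)$ and $v_1(x)$ of $1 - xP(v) = 0$ are analytic on $(0,\rho)$; multiplying by $\kappa_\ell$ produces the dominant branches $u_{1,\ell}(Z) = \kappa_\ell u_1(Z)$ and $v_{1,\ell}(Z) = \kappa_\ell v_1(Z)$ as analytic continuations of these real branches into the direction $O\kappa_\ell$, with all local analytic properties (in particular the expressions $u'_{1,\ell}$ and the non-vanishing of $P''$ at the saddle preimage) inherited from the aperiodic setting.

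The step I expect to be the main obstacle is precisely the equality case of the triangle inequality. The aperiodic proof is spared this subtlety because $|P(re^{it})| = P(r)$ only at $t \equiv 0 \pmod{2\pi}$; in the periodic regime equality holds across the whole orbit $\{\kappa^k r\}_{k}$, and eliminating a spurious matching of moduli between different cones $\mathcal{K}_{p,\ell}$ is what genuinely requires the arithmetic inputs $\gcd(c,p) = 1$ and the maximality of $p$ as the period of $P$. Once that obstruction is cleared, every remaining ingredient of Lemma~\ref{lem:domin} carries over without change.
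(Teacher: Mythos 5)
Your proposal is correct and follows essentially the same route as the paper: reduce to the real equation $1-xP(v)=0$ via the rotation $u=\kappa_\ell v$ and the periodicity identity $P(\kappa_\ell v)=\kappa_\ell^{-c}P(v)$ from Lemma~\ref{lem:dPk}, then rerun the Banderier--Flajolet domination argument. Where the paper merely restricts the strict triangle inequality to the open cones $\mathcal{K}_{p,\ell}$ and asserts that the aperiodic proof applies there, you make the equality case explicit (roots on the rays $\arg v=2\pi k/p$ are excluded using $\gcd(c,p)=1$ and the positivity of $1/x$), which is a welcome sharpening of the same idea rather than a different approach.
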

Lemma~\ref{lem:dPk}
leads to an asymptotic expansion of $1/P(u)$ in the
neighborhood of $u=\kappa_{\ell}\tau$, 
where we have $\left.\dfrac{d^kP(u)}{du^k}\right|_{u=\kappa_{\ell}\tau}=\dfrac{1}{\kappa_{\ell}^{c+k}}\left.\dfrac{d^{k}P(u)}{du^{k}}\right|_{u=\tau}$ ,
\begin{equation}
\label{eq:onekappa}
z=\kappa_{\ell}^{c}Z=\frac{\kappa_{\ell}^c}{P(u)}=\frac{1}{P(\tau)}-\frac{1}{2}\frac{P''(\tau)}{P^2(\tau)}\frac{(u-\kappa_{\ell}\tau)^2}{\kappa_{\ell}^2}
   +\sum_{j\geq
     3}\alpha_j\left.\dfrac{d^jP(u)}{du^j}\right|_{u=\tau}\frac{(u-\kappa_{\ell}\tau)^j}{\kappa_{\ell}^j},
\end{equation}
where the coefficients $\alpha_j$ are functions of the derivatives of
$P(u)$ evaluated at $u=\kappa_{\ell}\tau$.

The first terms of the preceding expansion give with
 $u=\kappa_{\ell}U$, $P''(\tau)=\sigma^2$ and $\rho=1/P(\tau)$,
\begin{equation}
U_1(Z)=\tau-\frac{\sqrt{2(1-Z/\rho)}}{\sqrt{\rho\sigma}}+O(1-Z/\rho),\quad 
        V_1(Z)=\tau+\frac{\sqrt{2(1-Z/\rho)}}{\sqrt{\rho\sigma}}+O(1-Z/\rho),\quad
        Z/\rho\sim 1^-.
\end{equation}
From there we recover the expression of the dominant small and large
roots on the path $\gamma_{\ell}=\overline{O\kappa_{\ell}^c}$.
\begin{equation}
\label{eq:U1V1}
\left.\begin{array}{l}
 u_{1,\ell}={\displaystyle\kappa_{\ell}\times U_1(\kappa_{\ell}^{-c}z)=
     \kappa_{\ell}\left(\tau-\frac{\sqrt{2(1-z/\kappa_{\ell}^c\rho)}}{\sqrt{\rho\sigma}}\right)+O\left(1-\frac{z/\kappa_{\ell}^c}{\rho}\right)}\\[3ex]
 v_{1,\ell}={\displaystyle\kappa_{\ell}\times V_1(\kappa_{\ell}^{-c}z)=
     \kappa_{\ell}\left(\tau+\frac{\sqrt{2(1-z/\kappa_{\ell}^c\rho)}}{\sqrt{\rho\sigma}}\right)+O(\left(1-\frac{z/\kappa_{\ell}^c}{\rho}\right)}
\end{array}\right|\quad Z=z/\rho\kappa_{\ell}^c \sim 1^-
\end{equation}

Equations (\ref{eq:QoverQ}) and (\ref{eq:uvh}) become
\begin{equation}
\label{eq:QovQz}
\frac{Q(u_{1,\ell}(z))}{Q(v_{1,\ell}(z))}=1+O\left(\sqrt{1-z\kappa_{\ell}^c/\rho}\right),
 \qquad Z=z/\kappa_{\ell}^c \sim \rho^-,
\end{equation}

\subsection{Integrations  along the paths $\Gamma_{r,\ell}$}
\label{sec:extzero}
As stated previously if a walk of characteristic polynomial $P(u)=a_d u^d+\dots+a_{-c}u^c$ has
period $p$,
 the natural integer $p$ is the largest
common divisor of the set of powers of $u$ in $u^cP(u)$; by the Bezout
theorem, any linear combination $L$ with positive integer coefficients of
the integers $d,d-1,\dots,-c+2,-c$ verifies $L= 0\bmod p$ if and only
if $p=\operatorname{pgcd}(d,d-1,\dots,-c)$.

The function $B_h(z)=z\overline{B}_h(z)=[u^0]F^{[>h]}(z,u)$ of Equation
(\ref{eq:overB})
 counts the number of
bridges of height above $h$. The function
$[u^0](1-zP(u))$  counts all the bridges and by the preceding remark
its non null coefficients $p_i$ verify $i=mp$ for $m\in\bN$; since
this function dominates term by term $B(z)$, we have
$B(z)=\widehat{B}(z^p)$ 
with $\widehat{B}(z)$ analytic at $0$ (see
\cite{BanderierFlajolet2002}, section 3.3).

Let us consider a walk of length $n$ with $n$ large.

The sequence of jumps 
$\overbrace{+d+d\dots+d}^{\lceil
  2x\sqrt{n}\rceil}\overbrace{-c-c\dots -c}^{\lfloor 3x d/c\times \sqrt{n} \rfloor}$
 reaches height $2xd\sqrt{n}$ and terminates at a
negative ordinate. This implies that there is at least a walk that
reaches the $x$-axis at time $t\leq \lceil(2x+3xd/c)\sqrt{n}\rceil$.

\subsubsection{Height $h$ as an  integer}
\label{sec:hinteger}
The  preceding paragraph entails that the expansion of $B_h(z)$ at zero is therefore if
$h$ is an integer
\begin{equation}
B_h(z)=z\sum_{k=1}^d\sum_{j=1}^c \left(\frac{u_j}{v_k}\right)^h \frac{Q_k(u_j)}{Q_k(v_k)}\frac{u'_j}{v_k}=b_m z^{mp}+O\left(z^{(m+1)p}\right),\qquad \text{with } mp\leq t,
\end{equation}
which gives
\begin{align*}
\mathcal{I}_r&=\frac{1}{2i\pi}\int_{\cup_{\ell}
  \Gamma_{r,\ell}}\frac{B(z)}{z^{pn+1}}dz=b_m\sum_{0\leq \ell\leq
  p-1}\mathcal{I}_{r,\ell}\\
\text{where}&\quad
\mathcal{I}_{r,\ell}=\frac{1}{2i\pi}\int_{\Gamma_{r,\ell}}z^{-pn+1}z^{pm}(1+O(z^{(m+1)p})dz
\end{align*}
and $b_m$ is upper bounded by $V_{pm}$, the number of  unbounded
bridges of length $pm$ (see Theorem~\ref{theo:BaFlaaperunb}),
which implies $b_m=O(P(\tau)^{pm})$. 

The change of variable
$z=re^{i\nu}$ leads as $r\rightarrow 0$ to 
\begin{align*}
\label{eq:zpm}
b_m\int_{\Gamma_{r,\ell}}\frac{ z^{pm}}{z^{pn+1}}dz
&=b_m\int_{2\pi\ell/p+s}^{2\pi(\ell+1)/p-s}r^{-(pn-pm)}e^{(pm-pn)i\nu}d\nu\\
&=b_m\frac{r^{-(pn-pm)}}{p(m-n)i}\left[e^{p(m-n)i\nu}\right]_{2\pi\ell/p+s}^{2\pi(\ell+1)/p-s}\\[2ex]
&=r^{-(pn-pm)}\left(2b_m s+O(b_m p^2(m-n)^2s^3)\right)=O(r^{2pn}) \quad \text{for } s=o(r^{3pn}). 
\end{align*}
The error term follows immediately.
\subsubsection{Non-integer $h=x\sigma\sqrt{n}$}
\label{seq:hpernoint}
If we consider a non-integer  $h=x\sigma\sqrt{n}$ we can turn to the
method used in the aperiodic case by  following  the approach\footnote{See
 also \cite{FlajoletSedgewick2009} Section VII.7.1, and the use of a
 local uniformizing  parameter.} of
Banderier-Flajolet~\cite{BanderierFlajolet2002} (Example $5$) which
handles the case of a generalized Duchon walk $P_{d,c}(u)=u^d+u^{-c}$
of period $p=c+d$ with
kernel equation $u^c=z(1+u^{c+d})$. One obtains
\[
u_1(z)=z^{1/c}W_1(z^{p/c})=z^{1/c}\times(1+\alpha_1z^{p/c}+\dots),\quad
v_1(z)=\frac{1}{z^{1/d}}W_2(z^{p/d})=\frac{1}{z^{1/d}}\times(1+\beta_1z^{p/d}+\dots),
\]
where $W_1(Z)$ and $W_2(Z)$ are series in the variable $Z$. The
expansions of the other roots follow by substitutions
\begin{align*}
u_j(z)&=\omega_j z^{1/c}W_1(\omega_j^{p/c}z^{p/c})=\omega_j
z^{1/c}\times(1+\alpha_1\omega_j^{p/c}z^{p/c}+\dots),\\
v_k(z)&=\frac{1}{\varpi_k^{1/d} z^{1/d}}W_2(\varpi_k^{p/d}z^{p/d})
=\frac{1}{\varpi_k^{1/d}z^{1/d}}\times(1+\beta_2\varpi_k^{p/d}z^{p/d}+\dots).
\end{align*}
In particular $\dfrac{u_j(z)}{v_k(z)}
       =\dfrac{\omega_j}{\varpi_{k}}z^{1/c+1/d}+O\left(z^{p(1/c+1/d)}\right);$
but $\dfrac{1}{c}+\dfrac{1}{d}=\dfrac{c+d}{cd}=\dfrac{ep}{cd}$ where 
$e>1$ since by 
Remark~\ref{rem:pversusc} $p$ divides $c+d$ but $p$ is prime with $cd$.
We omit the end of the proof that follows the same steps as in the aperiodic case
 
We get to the following lemma.
\begin{lemma}
\label{lem:gammaper}
As $r\rightarrow 0$ and $s=o(r^{3n})$
\begin{equation}
 {\it(i)} \quad\int_{\cup_{\ell}\Gamma_{r,\ell}}\frac{B(z)}{z^{pn+1}}dz =
    o(r^{n}),\qquad
 {\it(ii)}\quad 
\int_{\cup_{\ell}\gamma^{\perp}_{\ell}}\frac{B(z)}{z^{pn+1}}dz=o(r^{2n}),
\end{equation}
and
\begin{equation}
(iii) \frac{1}{2i\pi}\int_{\widehat{\mathcal{C}}_r}\frac{B(pz)}{z^{pn+1}}dz
     =\sum_{0<\ell<p-1} \frac{1}{2i\pi}
\int_{\gamma_{\ell}^+\cup\gamma_{\ell}^-}\frac{B(z^p)}{z^{pn+1}}dz+o(r^n).
\end{equation}
\end{lemma}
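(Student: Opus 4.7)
My approach mirrors the proof of Lemma~\ref{lem:contourrho} from the aperiodic case, exploiting both the $p$-fold rotational symmetry of $\widehat{\mathcal{C}}_r$ and the decomposition $B(z)=\widehat{B}(z^p)$ implied by periodicity.

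I would handle \textit{(ii)} first, as it is the easiest. On each perpendicular segment $\gamma^\perp_\ell$ of length $2s$, the abscissas lie strictly inside the open disk of radius $\rho$ and stay uniformly away from every dominant singularity $\rho_\ell=\kappa_\ell^c\rho$, so by Lemma~\ref{lem:pdomin} together with Remark~\ref{rem:algebrprop} each small root $u_{j,\ell}(z)$, each large root $v_{k,\ell}(z)$ and their derivatives remain analytic and uniformly bounded on $\gamma^\perp_\ell$. The integrand $B(z)/z^{pn+1}$ is therefore uniformly bounded there; multiplying by the length $2s$ and summing the $p$ segments yields $O(ps)=O(s)=o(r^{3n})\subset o(r^{2n})$.

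For \textit{(i)} I would distinguish the combinatorial and formal regimes. When $h$ is an integer, Section~\ref{sec:hinteger} already supplies the local factorization $B_h(z)=b_m z^{pm}+O(z^{p(m+1)})$ with $pm=O(\sqrt n)$ and $b_m=O(\rho^{-pm})$; the substitution $z=re^{i\nu}$ on each arc reduces the contribution to a bracket $\bigl[e^{ip(m-n)\nu}\bigr]_{2\pi\ell/p+s/r}^{2\pi(\ell+1)/p-s/r}$ which is $O(s)$ since the full period $2\pi/p$ of $e^{ip\nu}$ sits inside the gap, so taking $s=o(r^{3pn})$ produces $o(r^n)$ after summing over $\ell$. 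When $h=x\sigma\sqrt n$, I would borrow the analytic machinery of Section~\ref{seq:hpernoint}: using $u_{j,\ell}(z)=\omega_j\kappa_\ell z^{1/c}\bigl(1+O(z^{p/c})\bigr)$ and $v_{k,\ell}(z)=\varpi_k^{-1}\kappa_\ell^{-1}z^{-1/d}\bigl(1+O(z^{p/d})\bigr)$, I would decompose $\overline{B}_h(z)/z^{pn}$ arc by arc into the same product $A_{jk}B_{jk}C_kD_{jk}$ of Equation~(\ref{eq:sBh}) and rerun the boot-strapping asymptotics verbatim. The boundary evaluations now occur at $\nu=2\pi\ell/p\pm s/r$ rather than at $0$ and $2\pi$, but after the change of variable $t=n(1-e^{ip\nu})$ the $2\pi\ell/p$ piece is annihilated by the period of $e^{ip\nu}$, so the endpoint cancellation carried out in Equation~(\ref{eq:limraper}) reproduces term by term and delivers $J_{jk,g,\ell}=o(r^{2pn})$ on every arc.

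Part \textit{(iii)} is then a routine assembly: writing $\widehat{\mathcal{C}}_r=\bigcup_\ell\bigl(\gamma^+_\ell\cup\gamma^\perp_\ell\cup\gamma^-_\ell\cup\Gamma_{r,\ell}\bigr)$ and subtracting the contributions estimated in \textit{(i)} and \textit{(ii)} from the full Cauchy integral leaves exactly the claimed sum over $\gamma^+_\ell\cup\gamma^-_\ell$ modulo $o(r^n)$. The main obstacle is the non-integer branch of \textit{(i)}: the aperiodic proof hinged on a single boundary cancellation at $\nu\equiv 0 \pmod{2\pi}$, and the delicate task is to verify that the analogous cancellation still holds simultaneously at all $2p$ endpoints $\nu=2\pi\ell/p\pm s/r$ and that summing the $p$ arcs costs at most one factor of $p$ and no more. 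The identities $B(z)=\widehat{B}(z^p)$ and the $p$-periodicity of $e^{ip\nu}$ are precisely what should make this go through.
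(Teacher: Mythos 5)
Your proposal follows essentially the same route as the paper: part \textit{(ii)} by boundedness of the integrand on segments of vanishing length $2s$, part \textit{(i)} by splitting into the integer-$h$ case (local factorization $B_h(z)=b_m z^{pm}+O(z^{(m+1)p})$ with $b_m=O(P(\tau)^{pm})$, substitution $z=re^{i\nu}$, and cancellation of the bracket at the endpoints $2\pi\ell/p\pm s/r$ because $e^{ip(m-n)\nu}$ takes the value $1$ at both $2\pi\ell/p$ and $2\pi(\ell+1)/p$) and the non-integer case via the periodic root expansions of Section~\ref{seq:hpernoint} rerun through the aperiodic machinery, and part \textit{(iii)} by assembly. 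Your sketch of the non-integer branch is in fact slightly more explicit than the paper, which stops at the root expansions and states that the remaining steps mirror the aperiodic case verbatim.
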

Case ({\it ii}) of this lemma follows as in the aperiodic case from
regularity and continuity arguments. 

Collecting the preceding equations leads us to the following lemma.

\begin{lemma}
\begin{equation}
\label{eq:collect}
\frac{1}{2i\pi}\oint_{\widehat{\mathcal{C}}_{r}}\frac{\overline{B}(z)}{z^n}dz
  =
  \sum_{\ell=0}^{p-1}\frac{1}{2i\pi}\int_{\gamma_{\ell}^+}\!\!\!+\!\!\int_{\gamma_{\ell}^-}\frac{\overline{B}(z)}{z^n}dz+o(r^n).
\end{equation}
\end{lemma}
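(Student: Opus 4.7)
The plan is to decompose the star-shaped contour $\widehat{\mathcal{C}}_r$ of Section~\ref{sec:percontou} into its four families of constituent arcs and segments, then apply the estimates of Lemma~\ref{lem:gammaper} to show that two of these families contribute negligibly, namely $o(r^n)$, leaving precisely the claimed sum over the radial paths $\gamma_\ell^+\cup\gamma_\ell^-$.

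First, by the construction recalled in Figure~\ref{fig:duchoncontour} one has the additive decomposition along the oriented contour
\[
\oint_{\widehat{\mathcal{C}}_r}\frac{\overline{B}(z)}{z^n}\,dz
 = \sum_{\ell=0}^{p-1}\left(\int_{\gamma_\ell^+} + \int_{\gamma_\ell^-}\right)\frac{\overline{B}(z)}{z^n}\,dz
 + \sum_{\ell=0}^{p-1}\int_{\gamma_\ell^\perp}\frac{\overline{B}(z)}{z^n}\,dz
 + \sum_{\ell=0}^{p-1}\int_{\Gamma_{r,\ell}}\frac{\overline{B}(z)}{z^n}\,dz.
\]
Because $B_h(z) = z\,\overline{B}_h(z)$ by Equation~(\ref{eq:sumjk}), the integrand $\overline{B}(z)/z^n$ coincides with $B(z)/z^{n+1}$, so the three sums on the right are exactly those already controlled in the previous lemma.

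Next, I invoke Lemma~\ref{lem:gammaper}\textit{(i)} to bound the sum of integrals over the small arcs $\Gamma_{r,\ell}$ by $o(r^n)$, and Lemma~\ref{lem:gammaper}\textit{(ii)} to bound the sum over the transverse closing segments $\gamma_\ell^\perp$ by $o(r^{2n}) = o(r^n)$. Both estimates hold in the standing asymptotic regime $r\to 0^+$ with $s=o(r^{3n})$. Absorbing the two negligible sums into a single $o(r^n)$ error term then yields Equation~(\ref{eq:collect}).

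The only point requiring attention is the bookkeeping between the scalings used in Lemma~\ref{lem:gammaper}, where the statement was phrased in terms of $B(z)/z^{pn+1}$ reflecting the factorization $B(z)=\widehat{B}(z^p)$ arising from periodicity, and the cleaner form $\overline{B}(z)/z^n$ of the present lemma. Under the identification $B(z)/z^{n+1} = \overline{B}(z)/z^n$ and the calibration of $s$ as $o(r^{3n})$, nothing new has to be proved; the present statement is essentially a repackaging that removes the $\Gamma_{r,\ell}$ and $\gamma_\ell^\perp$ pieces and exposes the radial contributions that will later be deformed into $p$ Hankel contours in the singularity-analysis step.
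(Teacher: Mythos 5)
Your proposal is correct and matches the paper's intent exactly: the paper offers no explicit argument beyond the phrase ``Collecting the preceding equations leads us to the following lemma,'' and the collection it has in mind is precisely your decomposition of $\widehat{\mathcal{C}}_r$ into the radial segments, the transverse segments $\gamma_\ell^{\perp}$, and the arcs $\Gamma_{r,\ell}$, with Lemma~\ref{lem:gammaper}\textit{(i)} and \textit{(ii)} absorbing the latter two families into the $o(r^n)$ term. Your remark on reconciling the exponents ($B(z)/z^{pn+1}$ versus $\overline{B}(z)/z^{n}$, via $B_h(z)=z\overline{B}_h(z)$) addresses a genuine notational looseness in the paper and is a welcome addition rather than a deviation.
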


\subsection{Hankel integration along the path
  $\mathcal{K}_{\ell}:=\overline{\kappa_{\ell}^c\rho,(\kappa_{\ell}^c\infty})$}
\label{sec:inthankelellc}
As $r$ tends to zero within the contour $\mathcal{C}_r$ defined
 in Section~\ref{sec:percontou} the path
 $R_{\ell}^+S_{\ell}^+S_{\ell}^-R_{\ell}^-$ has for limit the segment
 $\gamma_{\ell}=O_{\ell}\rho_{\ell}$
where $O_{\ell}=\kappa_{\ell}r$ (see  Figure~\ref{fig:duchoncontour}).

We want to integrate along the Hankel contour $\widehat{\gamma_{\ell}}$,
which goes ``by below'' from $\kappa_{\ell}\infty^+$ to
$\kappa_{\ell}\rho_0$ winds clockwise around this point and goes back
to $\kappa_{\ell}\infty^+$ ``by above''. 
\begin{equation}
\label{eq:Ilz}
\mathcal{I}_{\ell}=\frac{1}{2\pi
  i}\int_{\widehat{\gamma_{\ell}}}\frac{z}{z^{n+1}}\left(\frac{u_{1,\ell}(z)}{v_{1,\ell}(z)}\right)^h\frac{Q_1(u_{1,\ell}(z))}{Q_1(v_{1,\ell}(z))}\frac{u'_{1,\ell}(z)}{v_{1,\ell}(z)}dz+O(A^h)\quad\text{
  with } z=\kappa_{\ell}^c Z, \ Z\in\bR^+.
\end{equation}
The change  of variable $z
=Z(t)=\kappa_{\ell}^c\rho\left(1-\dfrac{t}{n}\right)$ gives as
$n\rightarrow \infty$
\[\frac{u'_{1,\ell}(Z(t))}{v_{1,\ell}(Z(t))}Z'(t)dt=
-\frac{1}{\tau\sqrt{2\rho\sigma}\sqrt{t}\sqrt{n}}
\times\left(1+O\left(\frac{1}{\sqrt{n}}\right)\right),\]
Integration of $\mathcal{I}_{\ell}$ (along the 
contour $\widehat{\gamma}_{\ell}$) follows {\it
  verbatim} the lines of integration of 
Section~\ref{sec:semilarge}; therefore we have
\begin{equation}
\label{eq:finIell}
\mathcal{I}_{\ell}=\kappa_{\ell}^{-cn}\mathcal{I}_0
\end{equation}
The integers $p$ and $c$ are relative
primes by Remark~\ref{rem:pversusc}. Since $\kappa_{\ell}=\kappa^{\ell}$ with $\kappa=e^{2\pi i/p}$,
we have again the set equation
\begin{equation}
\mathcal{S}_{\ell}=\{c\ell \bmod
p,\ \ell\in[0..p-1]\}=\{\ell,\ \ell\in[0..p-1]\}.
\end{equation}
Therefore the contour $\widehat{\mathcal{C}_r}$ defined in
Section~\ref{sec:percontou} is completely scanned through as
$\ell$ goes along the integers $0$ to $p-1$.

The discussion terminating Section~\ref{sec:unboundper} applies
identically, which gives
\begin{equation}
\label{eq:finper}
b_{mp}^{>h}=\sum_{\ell=0}^{p-1}\mathcal{I}_{\ell}=p\mathcal{I}_0+o(r^n), \qquad
\lim_{r\rightarrow 0,s=o(r^{3n})}b_{mp+b}^{>h}=0\ (b<p),
\end{equation}
where $\mathcal{I}_0/b_n^{<\infty}$ is given as in
Section~\ref{sec:semilarge}, and we get at first order
\begin{equation}
\label{eq:perdfdineq}
\mathcal{I}_0=\frac{\rho^n}{\sigma\sqrt{2\pi n}}e^{-2x^2\rho/\tau^2}
 \times\left(1+O\left(\frac{1}{\sqrt{n}}\right)\right).
\end{equation}
We conclude the periodic case by the theorem.
\begin{theorem}
\label{theo:rayleighperiodic}
With the same assumptions as in Theorem~\ref{theo:rayleighnoper},
for a set of jumps of period $p$, if $n=mp\rightarrow\infty$,  the probability $\beta_n^{>x\sigma\sqrt{n}}$ that a bridge of length $n$
goes upon the barrier $y=x\sigma\sqrt{n}$ follows a Rayleigh limit
law for $x\in ]0,+\infty[$
\begin{equation}
\beta_n^{>x\sigma\sqrt{n}}=\frac{b_n^{>h}}{b_n^{<\infty}}= 
  \frac{p\mathcal{I}_0}{p\mathcal{S}_0}\times(1+O(B^n))=
 e^{-2x^2\rho/\tau^2} 
 \times\left(1 +O\left(\frac{1}{\sqrt{n}}\right)\right),\quad (B<1).
\end{equation}
\end{theorem}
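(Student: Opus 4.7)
The plan is to assemble the pieces already constructed in Sections~\ref{sec:persing}--\ref{sec:inthankelellc} into a single Cauchy-integral evaluation on the star-shape contour $\widehat{\mathcal{C}}_r$ of Section~\ref{sec:percontou}. First I would write
\[
b_n^{>h}=\frac{1}{2i\pi}\oint_{\widehat{\mathcal{C}}_r}\frac{B_h(z)}{z^{n+1}}\,dz,
\]
and invoke Lemma~\ref{lem:gammaper}: as $r\to 0$ with $s=o(r^{3n})$, the contributions of the small arcs $\Gamma_{r,\ell}$ and of the transverse segments $\gamma_\ell^{\perp}$ are $o(r^n)$, so only the $2p$ radial segments $\gamma_\ell^{\pm}$ survive. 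On each of these segments the periodic domination property of Lemma~\ref{lem:pdomin} applies in the cone $\mathcal{K}_{p,\ell}$, so that, exactly as in Equation~(\ref{eq:Bhsimp}) of the aperiodic case, the sum defining $\overline{B}_h(z)$ reduces to the single dominant term built from $u_{1,\ell}(z)$ and $v_{1,\ell}(z)$ up to an exponentially small error $O(A^h)$ with $A<1$.

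Next I would deform each pair $\gamma_\ell^+\cup\gamma_\ell^-$ into a Hankel contour $\widehat{\gamma}_\ell$ winding around the singular point $\kappa_\ell^c\rho$, justified by Property~\ref{prop:domin} (extended to the periodic case via Lemma~\ref{lem:Pchi}) which places no other singularity on $|z|=\rho$. The change of variable $z=\kappa_\ell^c\rho(1-t/n)$ together with the local expansions of $u_{1,\ell}$, $v_{1,\ell}$ and $u'_{1,\ell}/v_{1,\ell}$ around $u=\kappa_\ell\tau$ provided by Lemma~\ref{lem:dPk} and Equations~(\ref{eq:U1V1})--(\ref{eq:QovQz}) reproduces verbatim the semi-large-powers calculation of Section~\ref{sec:semilarge}, with a prefactor $\kappa_\ell^{-cn}$ extracted from $z^{-n}$. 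This yields the key identity
\[
\mathcal{I}_\ell=\kappa_\ell^{-cn}\,\mathcal{I}_0\qquad(\ell=0,\dots,p-1),
\]
already recorded in Equation~(\ref{eq:finIell}).

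I would then perform the summation over $\ell$. By Remark~\ref{rem:pversusc} one has $\gcd(c,p)=1$, hence $\ell\mapsto c\ell\bmod p$ is a bijection of $\{0,\dots,p-1\}$; the same discussion that terminates Section~\ref{sec:unboundper} applies and shows that $\sum_\ell\kappa_\ell^{-cn}$ equals $p$ when $p\mid n$ and vanishes otherwise. Writing $n=mp$ therefore gives $b_n^{>h}=p\,\mathcal{I}_0+o(r^n)$, while the same symmetry argument applied in Section~\ref{sec:unboundper} (Equation~(\ref{eq:Sell})) yields $b_n^{<\infty}=p\,\mathcal{S}_0$. The factor $p$ cancels in the ratio, and the aperiodic computation already carried out in Equations~(\ref{eq:I1afterhankel})--(\ref{eq:xrhotau}) and~(\ref{eq:bninf}) delivers the announced Rayleigh limit $e^{-2x^2\rho/\tau^2}$ with relative error $O(1/\sqrt{n})$. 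The residual term $O(B^n)$ comes from the secondary Hankel contours around the other singularities $\zeta_j$ of $B_h(z)$ with $|\zeta_j|>\rho$, identified via Assumption~\ref{as:norep} as simple algebraic singular points so that each such contour contributes $O((\rho/|\zeta_j|)^n)$.

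The only step that is not merely bookkeeping is the deformation from $\widehat{\mathcal{C}}_r$ to the union of Hankel contours $\widehat{\gamma}_\ell$: one must check that the asymptotic simplification of $\overline{B}_h$, valid \emph{a priori} only on the radial segments $\gamma_\ell^\pm$ where Lemma~\ref{lem:pdomin} applies, extends to the full Hankel contour. This is where the aperiodic argument of Section~\ref{sec:ssasympaper} must be reproduced carefully, using that $u_{1,\ell}$ and $v_{1,\ell}$ are analytic continuations from the real segment and that the secondary small/large roots stay strictly subdominant along $\widehat{\gamma}_\ell$ by continuity as $s\to 0^+$, so the $O(A^h)$ simplification survives on the deformed path. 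Once this is settled, the theorem drops out immediately.
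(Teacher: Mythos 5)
Your proposal is correct and follows essentially the same route as the paper: the star-shaped contour of Section~\ref{sec:percontou}, the negligibility of the arcs and transverse segments via Lemma~\ref{lem:gammaper}, the cone-restricted domination of Lemma~\ref{lem:pdomin}, the Hankel evaluation giving $\mathcal{I}_\ell=\kappa_\ell^{-cn}\mathcal{I}_0$, and the cancellation of the factor $p$ against $b_n^{<\infty}=p\,\mathcal{S}_0$ using $\gcd(c,p)=1$. Your closing remark about verifying that the dominant-term simplification survives the deformation to the full Hankel contour is a point the paper passes over with a ``verbatim'' reference to Section~\ref{sec:semilarge}, so flagging it explicitly is a welcome refinement rather than a divergence.
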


\section{{\L}ukasiewicz bridges}
\label{sec:lukabridge}
When considering the case of {\L}ukasiewicz walks, where the only
negative jump is $-1$, the characteristic
polynomial verifies
\[P(u)=p_d u^d+\dots+\frac{p_{-1}}{u},\]
and we can obtain as in Banderier-Nicodeme~\cite{BaNi2010} more precise asymptotics for
the convergence to the Rayleigh law.

By differentiation of $K(z,u)=1-zP(u(z))=0$ with
respect to the variable $z$, we obtain that for any solution $u(z)$ of
$K(u,z)=0$
\[ \frac{\partial}{\partial z} (1-zP(u(z))=-P(u(z))-z\frac{\partial
  P(u)}{\partial u}u'(z) \Longrightarrow \frac{\partial P(u)}{\partial
  u}=-\frac{1}{z^2u'(z)}.\]
We also have since $\dfrac{1}{p_d z}u K(z,u)$ is a monic polynomial 
\[ Q_1(u)=\prod_{2\leq i \leq d}(u-v_i(z))= \frac{u(1-zP(u))}{p_dz(u-u_1(z))(u-v_1(z))},\]
and, therefore:
\begin{equation}
\label{eq:Qu1}
{Q_1}(u_1(z))=\left. \frac{1}{p_d z} \frac{\partial}{\partial u}
 \frac{u(1-zP(u))}{u-v_1(z)}\right|_{u=u_1(z)}
 = \frac{1}{p_dz^2}\frac{ u_1(z)}{u'_1(z)(u_1(z)-v_1(z))}\,.
\end{equation}
The value of ${Q}_1(v_1)$ follows by interchanging the r\^oles of $u_1$
and $v_1$.
The integral equation (\ref{eq:Bhsimp}) thus becomes (in
the aperiodic case)
\begin{equation}
\label{eq:u0Fgeqh2}
b_{n,luka}^{[>h]}=\frac{1}{2\pi i}\oint \frac{1}{z^{n+1}}z \left(\frac{u_1(z)}{v_1(z)}\right)^h
 \times\frac{-v'_1(z)u_1(z)}{v_1(z)^2}dz\times (1+O(A^h))\qquad (A<1).
\end{equation}
This equation leads to more precise expansions of the probability
$b^{>h}_n$ that we consider in Section~\ref{sec:luka}.

The periodic {\L}ukasiewicz case is handled in a similar way to the
general periodic case.

We have now
\begin{equation}
\label{eq:perluka}
\mathcal{I}_{\ell,luka}=\frac{1}{2\pi i}\int_{\widehat{\gamma_{\ell}}}\frac{z}{z^{n+1}}\left(\frac{u_{1,\ell}(z)}{v_{1,\ell}(z)}\right)^h\times\frac{-v'_{1,\ell}(z)u'_{1,\ell}(z)}{v^2_{1,\ell}(z)}dz+O(A^h)\quad\text{ with } z=\kappa_{\ell}^c Z, \ Z\in\bR^+.
\end{equation}
Following the same steps of proof as in
Section~\ref{sec:inthankelellc}, we obtain for a walk of period $p$,
\begin{equation}
\label{eq:finperlk}
\lim_{r\rightarrow 0}b_{mp,luka}^{>h}=\sum_{\ell=0}^{p-1}\mathcal{I}_{\ell,luka}=p\mathcal{I}_0, \qquad
\lim_{r\rightarrow 0}b_{mp+b}^{>h}=0\ (b<p),
\end{equation}
where $\mathcal{I}_0/b_n^{<\infty}$ is given as in Section~\ref{sec:semilarge},
\begin{equation}
\label{eq:perdfdineqlk}
\mathcal{I}_0=\frac{\rho^n}{\sigma\sqrt{2\pi n}}e^{-2x^2\rho/\tau^2}
 \times\left(1+O\left(\frac{1}{\sqrt{n}}\right)\right).
\end{equation}
\subsection{Occurrences of Hermite polynomials}
\label{sec:hermitte}
We mention here the occurrences of Hermite polynomials in the asymptotic
expansion of the tail distribution of the height of {\L}ukasiewicz
bridges.

In Equation (\ref{eq:b_nbeforehankel}) and in
the subsequent equations the
speed of convergence factor
 ${\displaystyle\left(1+O\left(\frac{1}{\sqrt{n}}\right)\right)}$ refers only
  to the variable $b_n^{>x\sigma\sqrt{n}}$.
The same lines of proof leads to
\begin{equation}
\label{eq:expx2}
e^{ -2 x^2}=\frac{1}{2i\sqrt{ \pi}}\oint_{\mathcal{H}_1} 
 \frac{1}{\sqrt{2}}\frac{e^t
   e^{-2x\sqrt{2t}}}{\sqrt{t}}dt
\end{equation}
Differentiating with respect to the variable $x$ the right member of Equation
(\ref{eq:expx2}) 
is permitted since the integrand is absolutely converging.
Differentiating  repetitively both sides of this equation
 with respect of the variate $x$ induces
derivatives of $e^{-2x^2}$ and integrals of the type
\begin{equation}
\label{eq:Ir}
I_{r}=\frac{1}{2i\sqrt{\pi}}\oint\frac{1}{\sqrt{2}} e^t
  e^{-2x\sqrt{2t}}t^{r/2}dt,\qquad r\in\{-1\} \cup \bN.
\end{equation}
These expressions can be computed
 by expansions of the exponential functions similar to those done
 for $b_n^{>x\sigma\sqrt{n}}$ in Equations (\ref{eq:I1afterhankel},\ref{eq:xrhotau}).

We have
\begin{equation}
\frac{1}{2i\sqrt{\pi}\sqrt{2}}\frac{d^{\rule{1pt}{0pt}r}}{dx^{r}}\oint\frac{ e^t
  e^{-2x\sqrt{2t}}}{\sqrt{t}}dt=(-2\sqrt{2})^r I_{r-1}\quad 
\text{and}\quad\frac{d^{\rule{1pt}{0pt}r}}{dx^{r}}e^{-2 x^2} = Q_r(x)\times e^{-2x^2}=I_{r-1},
\end{equation}
and therefore
${\displaystyle
I_{r}=\frac{(-1)^{r+1}}{(2\sqrt{2})^{r+1}}Q_{r+1}(x)e^{-2x^2},
}$
where $Q_r(x)$ is a polynomial which can be computed by the
recurrence
$\quad Q_{r+1}(x)=-4x Q_{r}(x)+Q'_{r}(x)\quad \text{with}\quad  Q_0(x)=1.$

The first values of $Q_r(x)$ verify:\\
\small
\begin{center}
\begin{tabular}{|r|c|}\hline
$r$ & $Q_r(x)$\\\hline
$                              0$&$ 1$\\
$                            1$&$ -4 x$\\
$                       2$&$ 16 x^2- 4$\\
$                     3$&$ -64 x^3  + 48 x$\\
$                   4$&$ 256 x^4  - 384 x^2  + 48$\\
$                 5$&$ -1024 x^5  + 2560 x^3  - 960 x$\\
$             6$&$ 4096 x^6  - 15360 x^4  + 11520 x^2  - 960$\\
$           7$&$ -16384 x^7  + 86016 x^5  - 107520 x^3  + 26880 x$\\
$        8$&$ 65536 x^8  - 458752 x^6  + 860160 x^4  - 430080 x^2  +
        26880$\\\hline
\end{tabular}
\end{center}
\normalsize
We observe that $Q_i(x)=(-1)^i\operatorname{He}_i(4x)$, where $\operatorname{He}_i(x)$ is the
probabilist's Hermite
polynomial of index $i$, with recurrence
\begin{equation}
\label{eq:He}
\operatorname{He}_{i+1}(x)=x\operatorname{He}_i(x)-\operatorname{He}_i'(x).
\end{equation}
Therefore we have for $I_r$ of Equation (\ref{eq:Ir})
\begin{equation}
\label{eq:solIr}
\frac{I_r}{\sqrt{\pi}}=\frac{1}{2\pi i}\oint\frac{1}{\sqrt{2}} e^t
  e^{-2x\sqrt{2t}}t^{r/2}dt
     =\frac{1}{\sqrt{\pi}}e^{-2x^2}\frac{1}{(2\sqrt{2})^{r+1}}\operatorname{He}_{r+1}(4x),
\quad r\in \{-1 \} \cup \bN.
\end{equation}
We have the equivalent mappings for the last equation
\begin{equation}
\label{eq:mapts}
             \left\{ t^{r/2}\leadsto \operatorname{He}_{r+1} \text{ with } r\geq -1\right\} 
      \qquad\equiv \qquad
             \left\{ s^r   \leadsto \operatorname{He}_{r+1} \text{ with } s\geq -1\right\};
\end{equation} 
the latter mapping (corresponding to $t=s^2$) is easier to manage with
Maple, while unable to use with the Hankel transform.

\subsection{Detailed asymptotic for {\L}ukasiewicz walks}
\label{sec:luka}
We assume in this section that $P(1)=1$ and $P'(1)=0$, and therefore
$\tau=\rho=1$.

Writing in a neighborhood of $z=1$  the algebraically
 conjugate roots $u_1(z)$ and
$v_1(z)$ as
\begin{align}
\label{eq:u1v1z}
u_1(z)&=1-\frac{\sqrt{2}\sqrt{1-z}}{\sigma}+\sum_{i\geq
  2}a_i\left(\sqrt{1-z}\right)^i,\\
v_1(z)&=1+\frac{\sqrt{2}\sqrt{1-z}}{\sigma}+\sum_{i\geq 2}(-1)^i
a_i\left(\sqrt{1-z}\right)^i,\\
\label{eq:uuovvv}
\frac{u_1(z)}{v_1(z)}&=1-2\frac{\sqrt{2}\sqrt{1-z}}{\sigma}+\sum_{i\geq 2}b_i\left(\sqrt{1-z}\right)^i,
\end{align}
we can compute the coefficients $a_i$
\begin{enumerate}
\item either by plugging a bounded expansion of $u_1(z)$ in
  $K(z)=1-zP(u_1(z))=0$, taking an expansion of $K(z)$ at $z=1$,
and identifying iteratively the coefficients (as expected, they are
functions of the derivatives of $P(u)$ evaluated at $u=\tau$);
\item more efficiently, by using
 Newton~\footnote{by instance, the algebraic inverse of $z=P(1-v)$ is obtained by 
using the change of variable $z=1-t/n=1-X^2$ and by initializing the
iteration with $+\sqrt{2}X/\sigma$, (resp. $-\sqrt{2}X/\sigma$) which gives
$u_1(1-X^2)$, (resp. $v_1(1-X^2))$.}
  iteration~\cite{aecf-2017-livre};
\item even faster, by using the
  function {\tt gfun:algeqtoseries} of the package~\footnote{Avalaible
  at Bruno
  Salvy's website.} {\tt gfun}~\cite{SalZim1994}
   if the coefficients of $P(u)$ are given as numeric rational; this function
 computes a series  expansion at the origin of a solution of an
  algebraic equation. 
\end{enumerate}
In particular, the expansions of the variables
$\widetilde{u}_1(t)=u_1(1-t/n)$ and $\widetilde{v}_1(t)=v_1(1-t/n)$ at $t=0$
provide the elements involved in the integral verified by 
$b_n^{>x\sigma\sqrt{n}}$ at a high order asymptotics.
Expansions of the following items of Equation (\ref{eq:u0Fgeqh2}) 
can be computed by Newton
 iteration~\cite{aecf-2017-livre}.
\begin{enumerate}
\setlength{\itemsep}{-2pt}
\item \label{it:u1v1} $\widetilde{u}_1(t)$ and $\widetilde{v}_1(t)$, \label{it:uv}
\item \label{it:m} $1/\widetilde{v}_1(t)$ and $m(t)= \dfrac{d}{d
  t}\dfrac{1}{\widetilde{v}_1(t)}=-\dfrac{\widetilde{v}'_1(t)}{v_1^2(t)}$,\label{it:mz} and  $s(t)=\widetilde{u}_1(t)\times m(t)$.
\item \label{it:log} $\log(\widetilde{u}_1(t))$ and $\log(\widetilde{v}_1(t))$,\label{it:lulv}
\item \label{it:loglog}
  $T(n,t,x)=x\sigma\sqrt{n}\Big(\log\big(\widetilde{u}_1(t)\big)-\log\big(\widetilde{v}_1(t)\big)\Big)$,
  and 
 $E(n,t,x)=\exp(T(n,t,x))$,\label{it:E}
\item \label{it:exp} ${\displaystyle
  b_n^{>x\sigma\sqrt{n}}=\dfrac{1}{2\pi i}\oint\frac{1}{(1-t/n)^{n}}
  E(n,t,x)\times s(t)dt}\qquad \left(z=1-\dfrac{t}{n}\right).$
\label{it:end}
\end{enumerate}

Inserting the expansions of items~\ref{it:u1v1} to~\ref{it:loglog} into
 Equation~\ref{eq:u0Fgeqh2}, we get at order $m$
\begin{equation}
\label{eq:bnextend}
 b_{n,luka}^{>x\sigma\sqrt{n}}=\frac{1}{2\pi i}\oint 
\frac{1}{\sigma\sqrt{2}\sqrt{n}}\frac{e^t e^{-2x\sqrt{2t}}}{\sqrt{t}}
\times\left(\sum_{k=0}^m n^{-k/2} S_k(t^{1/2},x)dt
+O\left(\frac{1}{n^{(m+1)/2}}\right)\right),
\end{equation}
 where
$S_k(s,x)$ is a multivariate polynomial of  degree $k+1$ in the
variable $s$ and  $\lfloor k/2\rfloor$ in the variable $x$ (see Section~\ref{subsub:He}).
By following the same steps as in Section~\ref{sec:semilarge} but
 at a higher asymptotic order, in the probabilistic setting $P(1)=1$
 with zero drift $P'(1)=0$, we have
 ${\displaystyle \beta_n^{>h}=b_n^{>x\sigma\sqrt{n}}/b_n^{<+\infty}}$
 which verifies
the following formula where $H\!e_i:=H\!e_i(4x)$, \linebreak$\ \sigma^2=P''(1),\ \xi=P'''(1)$ and $\theta=P''''(1)$
\scriptsize
\begin{align}
\nonumber
b^{>x\sigma \sqrt n}_n\times\frac{\sqrt{2\pi n}}{\exp(-2x^2)}=
{\it He}_{{0}}&+{\frac {{\it He}_{{1}}}{\sqrt {n}} \left(-\frac{3}{2\sigma
}-\frac {\xi}{6{\sigma}^{3}} \right) }\\\nonumber
&+\frac {1}{n}
 \left(
{\frac {{\it He}_{{4}}}{128}}+
{\it He}_{{3}} \left(-\frac{1}{12\sigma^2}-\,{\frac {\xi}{24
{\sigma}^{4}}}+{\frac {\theta}{96\,{\sigma}^{4}}}-{\frac {5\,{\xi}^{2}
}{288\,{\sigma}^{6}}}-\frac{1}{16} \right) x\right.\\
\label{eq:getorder3He}
&\qquad\qquad+\left.{\it He}_{{2}} \left(\frac{5}{4\sigma}^{2}+{\frac
   {7\,\xi}{24\,{\sigma}^{4}}}
-{\frac {\theta}{32
{\sigma}^{4}}}+{\frac {5\,{\xi}^{2}}{96\,{\sigma}^{6}}}+\frac{3}{16}
\right) \right) +O \left( {n}^{-3/2} \right)\\ 
\nonumber
=1+&\frac {x}{\sqrt {n}} \left(-\frac{6}{\sigma}\,-\,{\frac {2\xi}{{
3\sigma}^{3}}} \right) \\ \nonumber
&+\frac {1}{n}
 \left( -\frac{5}{\sigma^2}+\frac{1}{\sigma^4}\left(-\frac{7\xi}{6}+\frac{\theta}{8}\right)-\frac{5\,\xi^{2}}{24\,{\sigma}
^{6}}-\frac{3}{8}
+ \left(\frac{ 24}{\sigma^{2}}+\frac {1}{\sigma^4} \left( 
\frac {20\,\xi}{3}-\theta \right) +\frac{5\xi^2}{3\sigma^6}+3
\right) x^{2} \right.\\ \label{eq:getorder3}
&\qquad\qquad+\left.\left( -\frac{16}{3\sigma^2}+\frac{1}{3\sigma^4}\left(-8\xi+2\theta\right)-\frac {10\xi^2}{9\sigma^6}-
2 \right) x^4 \right) +O \left( {n}^{-3/2} \right). 
\end{align}
\normalsize
We computed
$B_{\operatorname{order}=7}(x,n)=b_n^{>x\sigma\sqrt{n}}/b_n^{<\infty}$ 
 at
order $n=7$ and extracted the first terms at order $n=3/2$ to provide
Equations
(\ref{eq:getorder3He},\ref{eq:getorder3}). In this last equation we
correct
the term $[n^{-1}][x^0]$ of the
equivalent formula~\footnote{Banderier-Nicodeme~\cite{BaNi2010}
considers only the first term in the expansion of $b_n^{<\infty}$.} in Banderier-Nicodeme~\cite{BaNi2010}.
The expansion can naturally be pushed to higher orders. 

\paragraph{Numerical check.}
 We use as tools of verification
the bridges with jumps $(+1,-1)$ and characteristic polynomial
\mbox{$P(u)=(u+1/u)/2$}. 

We obtain by computing at order $7$ the expansion of Equation
(\ref{eq:ucond})
\[b_n^{<\infty}\times\sqrt{2\pi n}=1-\frac{4}{n}+\frac{1}{32
  n^2}+\frac{5}{128 n^3}-\frac{21}{2048n^4}-\frac{399}{8192
  n^5}-\frac{8142861}{55296 n^6}+O\left(\frac{1}{n^7}\right).\]
 We substitute the moments
$(\sigma^2,\xi,\theta,\cdots)$ in
$B_{\operatorname{order}=7}(n,x)=b_n^{<h}/b_n^{<\infty}$ by the moments of $P(u)$ at $1$,
$\left(P^{(i)}(u)|{u=1}\right)=(1,-3,12,-60,\cdots)$. Then we
  compare
directly with the result obtained by D\'esir\'e Andr\'e reflexion 
(see Feller~\cite{Feller1950} p. 72);
this reflexion principle asserts that the number of
bridges of length $m=2k$ with  height at least $h$ is equal to the
number
of walks of length $m$ terminating at height $+2h$, therefore
 the
corresponding probability is ${\displaystyle\Pr_{\text{Andr\'e}}(m,h)}=
\binom{m}{m/2+h}/\binom{m}{m/2}$. Remarking that the inequality giving
$\beta_n^{>x\sigma\sqrt{n}}$ in Equation (\ref{eq:BhF0}) is strict, with $h=9, n=64$, $x=(h-1)/\sqrt{n}=1$, 
we get $B_{\operatorname{order}=7}(64,x)-{\displaystyle\Pr_{\text{Andr\'e}}(64,9)}\approx
2\times 10^{-8}$.
See 
Figure~\ref{fig:wksheet1}
and the Maple Script\\
\url{https://lipn.univ-paris13.fr/~nicodeme/Publications/heightofbridge.mpl}.

\subsubsection{Decomposition  of the expansion}
We are looking for an expression providing the occurrences of the polynomials
$H\!e_r(x)$ in the expansion of $b_{n,luka}^{>h}$.

We will use the mapping $s^r\leadsto \operatorname{He}_{r+1}$ of Equation
(\ref{eq:mapts}) to this aim.

Let us express the expansions of the terms of
 Equation~(\ref{eq:u0Fgeqh2}) with respect to
 $X=\dfrac{\sqrt{t}}{\sqrt{n}}$ at $X=0$  where we set by
 projection to $1$ the non-null
 numeric coefficients~\footnote{These coefficients are functions of
 the $k$th derivatives of $P(u)$ evaluated at $z=\rho$.} of $X^i$ for $i\geq 0$, denoting by $\stackrel{1}{=}$ these
 expansions; we observe that $\sigma\eq1 1$.
\[ F=\exp(X)=1+X+\frac{X^2}{2!}+\frac{X^3}{3!}+\dots \eq1
1+X+X^2+X^3+\dots.\]
In particular, we have
\begin{equation}
\label{eq:expHe}
e^{2x^2}\times \frac{1}{2i\pi}\oint e^te^{-2x\sqrt{2t}}t^{k/2}dt \eq1
e^{x^2}\times \oint e^te^{-x\sqrt{t}}t^{k/2}dt\eq1 
\operatorname{He}^1_{k+1}(x),
\end{equation}
where
\begin{align*} 
\operatorname{He}^1_k(4x)\eq1 \operatorname{He}_k(x) \quad\text{ and }\quad
 \operatorname{He}^1_{k+1}(x) \text{ is given by }
\operatorname{He}^1_{k+1}(x)=x\operatorname{He}^1_k(x)+(H\!e^1_k)'(x).
\end{align*}
We state the following lemma, where $G(X)=\dfrac{u_1(X)}{v_1(X)}$
and
$R(X)=u_1(X)\dfrac{v'_1(X)}{v_1^2(X)}$, while $z=1-\dfrac{t}{n}=1-X^2$
\begin{lemma}
\label{lem:eq1}
\begin{flalign}
\label{eq:1uv}
(a)\ & u_1(X)\eq1 v_1(X)\eq1
G(X)\eq1 R(X) \eq1 \frac{1}{1-X},\\[2ex]
\label{eq:ShX}
(b)\ & S_h(X):=e^{-x\sqrt{t}}\times G(X)^{x\sqrt{t}/X}\eq1 \sum_{i\geq
  0}S_i(W)X^{i},\qquad S_i(W)=\sum_{1\leq j \leq i}^jW^j,\ W=x\sqrt{t}\\
\label{eq:eqT}
(c)\ & e^{-t}\frac{1}{z(X)^n}=e^{-t}\left(\frac{1}{1-X^2}\right)^{t/X^2}\eq1 1+
  \sum_{i\geq  1}T_i(t)X^{2i}\qquad  T_i(t)=t\sum_{0\leq j\leq i-1}t^j,\\
(d)\ & dX = \frac{1}{\sqrt{t}\sqrt{n}}dt.
\end{flalign}
\end{lemma}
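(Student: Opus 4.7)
The plan is to verify each of the four equivalences by a direct formal-series expansion in $X$, exploiting the projection $\eq1$ which normalises every non-zero numerical coefficient of a monomial $X^i$ (or of a mixed monomial $W^j X^i$ or $t^j X^{2i}$) to $1$ while leaving the formal parameters $W=x\sqrt{t}$ and $t$ unchanged; in particular $\sigma\eq1 1$.

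For part~(a), I would substitute $\sqrt{1-z}=X$ into Equations~(\ref{eq:u1v1z})--(\ref{eq:uuovvv}), giving $u_1(X)=1-(\sqrt{2}/\sigma)X+\sum_{i\ge 2}a_i X^i$ and the analogous expansions for $v_1(X)$ and $G(X)=u_1(X)/v_1(X)$. Under the generic hypothesis, each coefficient is non-zero, so projection sends every $X^i$-coefficient to $1$ and all three series collapse to $\sum_{i\ge 0}X^i=1/(1-X)$. For $R(X)=u_1(X)v_1'(X)/v_1(X)^2$, I would check the constant term $R(0)=\sqrt{2}/\sigma\neq 0$ and observe that the higher Taylor coefficients are rational combinations of the $a_i$'s which are generically non-vanishing, yielding $R(X)\eq1 1/(1-X)$ as well.

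For part~(b), I would take logarithms: since $\log G(X)=X\lambda(X)$ with $\lambda(X)=-2\sqrt{2}/\sigma+\ell_2X+\ell_3X^2+\cdots$, we can rewrite $S_h(X)=\exp\bigl(W(\lambda(X)-1)\bigr)$. Expanding this as a double series in $W$ and $X$ and collecting powers of $X$, each coefficient of $X^i$ with $i\ge 1$ is a linear combination of powers $W^j$ for $1\le j\le i$ (plus contributions from the constant-in-$X$ factor $e^{W(\lambda_0-1)}$ that the projection step collapses), with generically non-zero numerical factors. Normalising each such factor to $1$ then gives $S_i(W)=\sum_{j=1}^i W^j$. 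For part~(c), I would compute $\log(1/z(X)^n)=-n\log(1-X^2)=nX^2+nX^4/2+nX^6/3+\cdots$; using $nX^2=t$ this becomes $t+tX^2/2+tX^4/3+\cdots$, so that $e^{-t}/z(X)^n=\exp(tX^2/2+tX^4/3+\cdots)$. Expanding this exponential, the coefficient of $X^{2i}$ for $i\ge 1$ is a polynomial in $t$ of degree $i$ whose $t^j$-terms for $1\le j\le i$ are generically non-vanishing; projection then returns $T_i(t)=t\sum_{j=0}^{i-1}t^j$. Part~(d) is immediate: from $t=nX^2$ one obtains $dt=2nX\,dX$, so $dX=dt/(2\sqrt{n}\sqrt{t})$, and the numerical factor $1/2$ is absorbed by the projection.

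The main technical obstacle is the systematic justification of the non-vanishing claims: the coefficients $a_i,\ell_i$ are explicit rational functions of the derivatives $P^{(k)}(\tau)$, and one must verify that no accidental cancellation occurs either in forming the product defining $R(X)$, in the logarithmic series yielding $\lambda(X)$, or in the exponential expansions used in~(b) and~(c). This genericity is guaranteed by Assumption~\ref{as:norep} together with the {\L}ukasiewicz structure of $P(u)$; with it in place, the remaining manipulations reduce to routine formal-series algebra.
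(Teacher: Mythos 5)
Your overall strategy---expand everything as a formal series in $X$ and project every non-zero numerical coefficient to $1$---is exactly the paper's, and your treatments of (a), (c) and (d) are in substance the ones given there. The genuine problems are concentrated in part (b). First, you miss the parity structure: from Equations (\ref{eq:u1v1z})--(\ref{eq:uuovvv}) one has $v_1(X)=u_1(-X)$, so $\log G(X)=\log u_1(X)-\log u_1(-X)$ is an \emph{odd} function of $X$ and $\lambda(X)=\log G(X)/X$ is \emph{even}. Every other coefficient $\ell_i$ in your expansion is identically zero, not ``generically non-vanishing''; this is precisely what the paper's proof exploits to reduce $S_h(X)$ to $\exp\left(W/(1-X^2)\right)$ and to conclude that only even powers $X^{2i}$ occur (the $X^{i}$ in the displayed statement versus the $X^{2i}$ at the end of the paper's proof is an internal inconsistency of the paper, but the even-power form is the one actually used downstream in Equations (\ref{eq:PhiX})--(\ref{eq:Xfin})). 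Second, your handling of the constant term of the exponent is wrong: writing $S_h(X)=\exp\bigl(W(\lambda(X)-1)\bigr)$ leaves a factor $\exp\bigl(W(\lambda_0-1)\bigr)$ with $\lambda_0-1\neq 0$, and the projection does \emph{not} collapse this to $1$, since $W$ is a retained formal parameter; it projects to $\sum_{j\geq 0}W^j$, an infinite series in $W$, and multiplying by it destroys the degree bound $1\leq j\leq i$ in $S_i(W)=\sum_{1\leq j\leq i}W^j$. The bound survives only because the prefactor $e^{-x\sqrt{t}}$ is there precisely to cancel that constant term, so that the projected exponent is $WX^2/(1-X^2)$, which starts at order $X^2$; then $\bigl(WX^2/(1-X^2)\bigr)^k$ contributes to $X^{2i}$ only for $k\leq i$, which is what yields $\sum_{1\leq j\leq i}W^j$.

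A further overstatement: your closing claim that the non-vanishing of all the numerical coefficients ``is guaranteed by Assumption~\ref{as:norep} together with the {\L}ukasiewicz structure'' is not supported by the paper, which explicitly leaves exactly this non-vanishing question open (see the remark at the end of Section~\ref{subsub:He}). The lemma should be read as formal bookkeeping for which monomials \emph{can} occur, not as a proved genericity statement.
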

\begin{proof}
Equations (\ref{eq:u1v1z}, \ref{eq:uuovvv} provide (a); we also get
\begin{equation}
S_h(X)\eq1\exp\left(\frac{W}{X}(\log(u_1(X))-\log(v_1(X))\right)\eq1
\exp\left(\frac{W}{1-X^2}\right),\qquad W=x\sqrt{t}. 
\end{equation}
Using a ``projected'' Fa{\`a} di Bruno Formula (see ~\cite{FlajoletSedgewick2009}
p.188), we have  $S_h(X)=f(g(X))=\sum_n h_n/n!$  with  $f(X)=\exp(W X)=\sum_n W^nX^n/n!$ and
$g(X)=\dfrac{1}{1-X^2}$, and therefore (b) follows from
\begin{align*}
f(g(X))&=\sum_n
h_n\frac{X^n}{n!}\eq1\sum_{k\geq0} W^k\left(\frac{1}{1-X^2}\right)^k\\
&\eq1
1+\frac{W}{1-X^2}+\frac{W^2}{(1-X^2)^2}+\dots\frac{W^k}{(1-X^2)^k}+\dots
\eq1 \sum_{i\geq 0}\sum_{1\leq j\leq i}W^j X^{2i}.
\end{align*}
Similarly, (c) follows from
\begin{align}
\label{eq:expX}
e^{-t}\frac{1}{z(X)^n}&=e^{-t}\exp\left(\!\frac{t\log(1-X^2)}{X^2}\!
\!\right)
     \eq1 1+\sum_{i\geq 1}\left(\frac{t}{1-X^2}\right)^i
\eq1 1+\sum_{i\geq 1}X^{2i} \,\,t\!\!\!\!\!\sum_{0\leq j \leq i-1}\!\!\!t^j.
\end{align}
\end{proof}
\subsubsection{Collecting  the terms
  $\operatorname{He}_i$ in the expansions.}
\label{subsub:He}
Writing $\Phi(X):=e^{-t}\times \dfrac{S_h(X)R(X)}{z(X)^n}$
 and $s=\sqrt{t}$ leads to
\begin{align}
\label{eq:PhiX}
&\frac{1}{1-X}\sum_{i\geq 0} a_{2i}(s)X^{2i}\eq1\sum_{i\geq 0}\sum_{0\leq
  j\leq i}a_{2j}(s)(1+X)X^{2j}\\
\nonumber
&\qquad\qquad\Longrightarrow \Phi(X)\eq1
  \sum_{r\geq 0}(1+X)X^{2r}\sum_{i=0}^r S_i(xs)T_{r-i}(s^2),
\end{align}
where $S_i(xs)=\sum_{1\leq j \leq i}(xs)^j$ and
$T_{r-i}(s^2)=s^2\sum_{0\leq j\leq r-i}s^{2j}$.
We set
\[\qquad\delta_k=\lceil k/2-\lfloor k/2 \rfloor \rceil, \text{ that verifies }
\left\{\begin{array}{l}\delta_{2i}=0\\\delta_{2i+1}=1\end{array}\right..\]
Since  $s=\sqrt{t}$ and $dX\eq1 \dfrac{1}{\sqrt{t}\sqrt{n}}dt$, by
projection of  Equation (\ref{eq:perluka}) for $\ell=0$, 
with
$C_k(s):=[X^k]\Phi(X)$ 
we obtain \footnote{See Figure~\ref{fig:wksheet1} (Right).}  
\begin{equation}
\label{eq:Xfin}
C_k(s)\eq1 s^{\delta_k}\times\left(
       \sum_{0\leq i \leq\lfloor k/2\rfloor}s^ix^{\delta_i}\!\!\sum_{0\leq j \leq \lfloor i/2\rfloor }x^{2j}
      +\sum_{\lfloor k/2\rfloor+1\leq i \leq k+1}s^ix^{\delta_i}\!\!\sum_{0\leq j \leq \lfloor k/2\rfloor
        -\lfloor (i+1)/2\rfloor} x^{2j}\right).
\end{equation}
This leads to the following proposition, where
$dX=\dfrac{1}{\sqrt{t}\sqrt{n}}dt$ provides a factor $s^{-1}$
\begin{proposition}
\begin{equation}
\label{prop:bneq1}
\frac{b_{n,luka}^{>h}\times e^{2x^2}}{\sqrt{2\pi n}} \eq1 
  \sum_{k\geq 0}
    \frac{1}{n^{k/2}}\left.\frac{C_k(s)}{s}\right|_{s^i=\operatorname{He}_{k+i-1}},
\end{equation} 
where $C_k(s)=[X^k]\dfrac{S_h(X)R(X)}{z(X)^n}$ is the $k^{th}$
term of the projection to $1$ of the integrand of Equation
(\ref{eq:u0Fgeqh2}) and is given in Equation (\ref{eq:Xfin}).

Expansion at order $2$ gives
\begin{equation}
\frac{b_n^{>h}\times e^{2x^2}}{\sqrt{2\pi n}} \eq1 
  \operatorname{He}_0+\frac{\operatorname{He}_1}{\sqrt{n}}+\frac{\operatorname{He}_2+x \operatorname{He}_3+\operatorname{He}_4}{n}+\frac{\operatorname{He}_3+x
    \operatorname{He}_4+\operatorname{He}_5}{n^{3/2}}+O\left(\frac{1}{n^2}\right).
\end{equation} 
\end{proposition}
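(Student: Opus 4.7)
The plan is to start from the Cauchy integral (\ref{eq:u0Fgeqh2}) specialized to {\L}ukasiewicz walks and to make visible, at each order in $1/\sqrt{n}$, the Hankel integrals that were shown in~(\ref{eq:solIr}) to produce Hermite polynomials. First I would perform the usual change of variable $z=1-t/n$, already used in Section~\ref{sec:semilarge}, which straightens the dominant Hankel contour and induces the substitution $X=\sqrt{t/n}$ (so $s=\sqrt{t}=X\sqrt{n}$ and $dz=-dt/n$). Under this substitution the integrand factorises as
\begin{equation*}
\frac{1}{z^{n+1}}\,z\,\Big(\tfrac{u_1(z)}{v_1(z)}\Big)^{h}\frac{-v_1'(z)u_1(z)}{v_1(z)^2}
\;=\;\frac{1}{\sqrt{n}}\cdot\frac{e^{t}e^{-2x\sqrt{2t}}}{\sqrt{t}}\cdot\frac{S_h(X)\,R(X)}{z(X)^n}\cdot e^{-t}\cdot\mathcal{E}(X,n),
\end{equation*}
where $\mathcal{E}(X,n)$ collects the nonsingular prefactors that tend to $1$, the group $e^{-2x\sqrt{2t}}$ has been peeled off to expose the Hankel representation of~$e^{-2x^2}$, and the factor $e^{-t}$ is kept to make $\Phi(X):=e^{-t}S_h(X)R(X)/z(X)^n$ well-defined and Taylor-expandable at $X=0$.

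Next I would apply the projection $\eq1$ of Lemma~\ref{lem:eq1}, which normalizes away all positive numerical constants in the coefficients of non-negative powers of $X$; this step is justified because every non-vanishing coefficient ultimately multiplies a Hankel moment $t^{r/2}$ whose value under~(\ref{eq:solIr}) is an Hermite polynomial in $4x$, and only the structural exponents $(r,k)$ need to be tracked to identify which Hermite appears. Under $\eq1$, Lemma~\ref{lem:eq1} shows that $S_h(X)$, $R(X)$ and $1/z(X)^n$ all collapse into geometric-type series, so that $\Phi(X)\eq1\sum_{k\ge 0}C_k(s)\,X^k$ with the explicit combinatorial form~(\ref{eq:Xfin}). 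Substituting $X^k=s^k/n^{k/2}$ and using $dX=dt/(s\sqrt{n})$, the Cauchy integral becomes
\begin{equation*}
\frac{b_{n,luka}^{>h}}{\sqrt{2\pi n}^{\,-1}}
\;\eq1\; \sum_{k\ge 0}\frac{1}{n^{k/2}}\cdot\frac{1}{2i\pi}\oint \frac{e^{t}e^{-x\sqrt{t}}}{\sqrt{t}}\cdot \frac{C_k(s)}{s}\,dt,
\end{equation*}
the factor $1/s$ absorbing the Jacobian of $dX$ versus $dt$.

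Now I would invoke the Hankel-to-Hermite identity~(\ref{eq:expHe}), which after projection reads
\begin{equation*}
\frac{1}{2i\pi}\oint \frac{e^{t}e^{-x\sqrt{t}}}{\sqrt{t}}\,t^{r/2}\,dt\;\eq1\; e^{-x^2}\,\operatorname{He}_{r+1}^{1}(x)\;\eq1\; e^{-x^2}\,\operatorname{He}_{r}(4x),
\end{equation*}
applied term by term to the polynomial $C_k(s)/s$ in $s=\sqrt{t}$. Since $C_k(s)/s$ is a polynomial in $s$ whose monomial $s^i$ contributes a Hankel moment of order $(k+i-1)/\cdot$ (after re-indexing $r\mapsto k+i-1$ to keep track of the initial $X^k$ factor that migrates into $t^{k/2}$), the substitution rule $s^i\mapsto \operatorname{He}_{k+i-1}$ in the statement is exactly what singular integration returns. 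Pulling the common $e^{-2x^2}$ (or, under $\eq1$, $e^{-x^2}$) through to the left-hand side yields the claimed identity. The verification at order $n^{-3/2}$ using~(\ref{eq:Xfin}) reproduces the last displayed formula of the proposition, which provides a check.

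The main obstacle I expect is the careful bookkeeping between the projected identity $\eq1$ and the true numerical expansion: one must check that no cancellation between \emph{a priori} distinct moments $t^{r/2}$ is hidden by the projection, i.e.\ that each pair $(k,i)$ arising in $C_k(s)$ remains independent so that the map $s^i\mapsto \operatorname{He}_{k+i-1}$ is well-defined rather than merely formal. This reduces to showing that in the exact (non-projected) expansion the $k$th coefficient in $X$ is a polynomial in $s$ of degree at most $k+1$ with non-vanishing leading behaviour at each of the prescribed bi-degrees in $(s,x)$ appearing in~(\ref{eq:Xfin}); once this is verified the combinatorial computation done in Equations~(\ref{eq:PhiX})--(\ref{eq:Xfin}) carries over verbatim, and the proposition follows.
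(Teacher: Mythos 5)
Your proposal follows essentially the same route as the paper: the change of variable $z=1-t/n$ with $X=\sqrt{t}/\sqrt{n}$, the projection $\eq1$ of Lemma~\ref{lem:eq1} collapsing $S_h$, $R$ and $e^{-t}/z(X)^n$ into geometric-type series, the extraction $C_k(s)=[X^k]\Phi(X)$ as in Equations~(\ref{eq:PhiX})--(\ref{eq:Xfin}), and the termwise Hankel-to-Hermite substitution $s^i\leadsto\operatorname{He}_{k+i-1}$ via Equations~(\ref{eq:expHe}) and~(\ref{eq:solIr}). The caveat you raise about hidden cancellations under the projection is the same issue the paper itself acknowledges and leaves open in the remark following the proposition, so your account is faithful to the intended (formal) proof.
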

\begin{remark}
The expansions of Equations (\ref{eq:perluka},\ref{eq:bnextend}) would
lead without doing the projection of the scalars to $1$ to an
antecedent $\overline{\Phi(X)}$ of the function $\Phi(X)$ of Equation
(\ref{eq:PhiX}) verifying
\begin{equation}
\overline{\Phi}(X)=\sum_{r\geq 0}(1+\beta_r X)\gamma_{2r}X^{2r}
        \sum_{i=0}^r\overline{S}_i(xs)\overline{T}_{r-i}(s^2),
\quad \left\{\begin{array}{l}
\overline{S}_i(sx)=\sigma_{i,1}xs+\sigma_{i,2}x^2s^2+\dots\\
\overline{T}_{r-i}(s^2)=s^2(\theta_{r-i,0}+\theta_{r-i,2}s^2+\dots)
\end{array}\right.
\end{equation}
Proving that none of the scalars
$\beta_r,\gamma_{2r},\sigma_{i,.},\theta_{r-i,.}$ is zero   is left open for
future work.
\end{remark}
\begin{remark}
\label{rem:He}
From the recurrence giving $\operatorname{He}_i$ in Equation
(\ref{eq:He}), we have $\operatorname{He}_n(x)=\Theta((4x)^n)$; on the
other side, $\dfrac{d^nP(u)}{d^nu}=\Theta((n-1)!)$.
The $kth$ term $T_k$ 
of the diverging series $S(n)$ giving $b^{>x\sigma \sqrt
  n}_n/\exp(-2x^2)$ for a {\L}ukasiewicz bridge verifies
$T_k=\Theta(4^kx^k(n-1)!/n^{k/2})$.

This suggests that the smaller term of this series is near $k=\dfrac{\sqrt{n}}{4x}$
\end{remark}

\section{Conclusion}
\label{sec:conclusion}
We provide in this article a rigorous proof of the law 
 of the height
of discrete bridges, including the case of periodic walks, with a
convergence as expected to a Rayleigh law.
We however limit ourselves to the case where the
characteristic polynomial has no repeated factor; future work could
release this assumption. Using the result
 of Banderier-Nicodeme~\cite{BaNi2010} we provide an algorithmic method to compute
more precise expansions of the convergence to the Rayleigh law for
{\L}ukasiewicz bridges. The detailed law of periodic walks could be
later worked out, in particular for simple walks with only one positive
and one negative jump, akin to the Duchon walk of
Figure~\ref{fig:duchoncontour}.
We propose in Section~\ref{sec:tight} a conjecture that could lead to local
refinements of the strong embedding results.

\paragraph{Acknowledgements.} 
We thank Cyril Banderier for his leading contribution in our common
2010 article; his 2002 article with Philippe Flajolet also
provided us with rich materials that we could use in the present
article. We are grateful  to Michael Wallner who pointed the simplification
error in the 2010 article; this gave us the motivation to find a
correcting 
proof. We also warmly thank Alin Bostan, Philippe Dumas and
Bruno Salvy for their friendly and judicious remarks.

\appendix
\begin{figure}[ht]
\setlength{\unitlength}{1cm}
\begin{picture}(0,0)(0,16)
\put(-1.8,0){\includegraphics[width=0.7\textwidth]{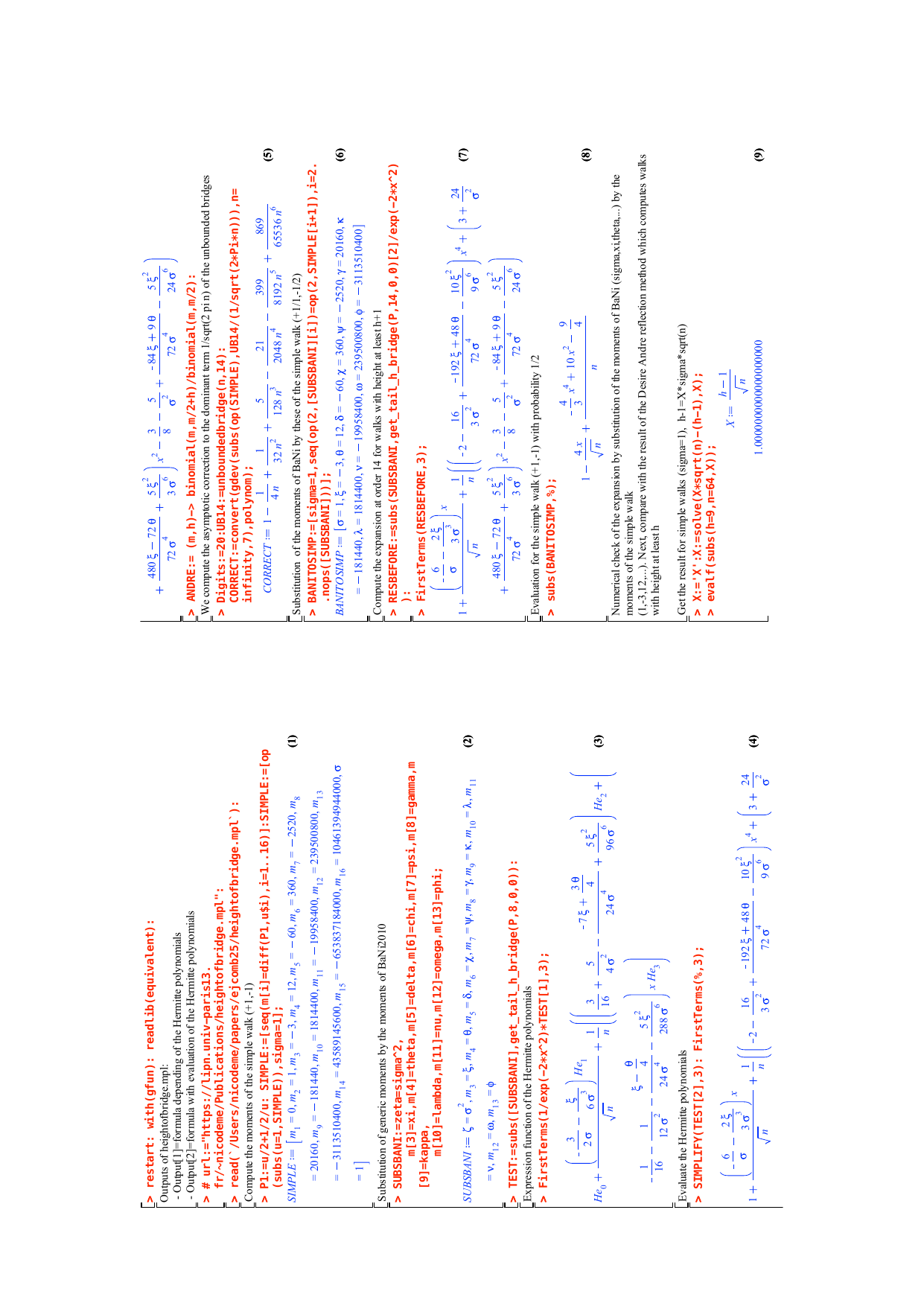}}
\put(6.8,0){\includegraphics[width=0.7\textwidth]{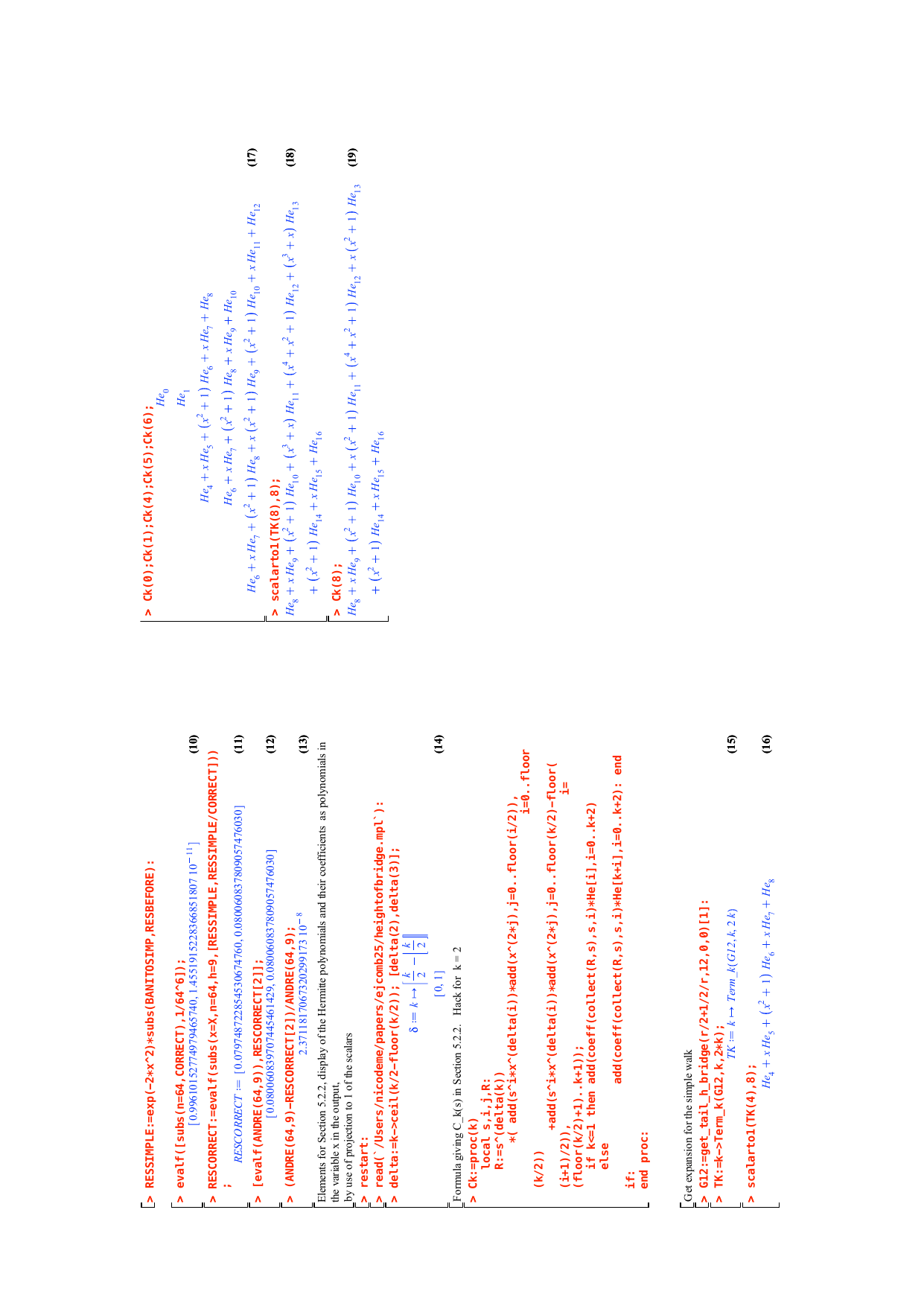}}
\end{picture}
\caption{\rule{0cm}{15.5cm}Maple worksheet}
\label{fig:wksheet1}
\end{figure}

\bibliographystyle{acm}
\bibliography{rhbridge}

\end{document}